\title{$\ell$-adic topological Jacquet-Langlands duality.}
\date{October 2023.}
\let\@wraptoccontribs\wraptoccontribs
\author{Andrew Salch}
\begin{document}
\begin{abstract}
We embed the Lubin-Tate tower into a larger tower of formal schemes, the {\em degenerating Lubin-Tate tower.} We construct a topological realization of the degenerating Lubin-Tate tower, i.e., a compatible family of presheaves of $E_{\infty}$-ring spectra on the \'{e}tale site of each formal scheme in the degenerating Lubin-Tate tower, which agrees on the base of the tower with the Goerss--Hopkins presheaf on Lubin-Tate space. 

We define and prove basic properties of nearby cycle and vanishing cycle presheaves of spectra on formal schemes. We apply these constructions to our spectrally-enriched degenerating Lubin-Tate tower to produce, for every spectrum $X$, an ``$\ell$-adic topological Jacquet-Langlands (TJL) dual'' of $X$. We prove that there is a correspondence between: 1. certain irreducible representations of $\Aut(\mathbb{G})$ occurring in $(\mathcal{K}(E(\mathbb{G}))^{\wedge}_{\ell})_*(X)$, and 2. certain supercuspidal irreducible representations of $GL_n$ occuring in the rational homotopy groups of the TJL dual of $X$. Here $E(\mathbb{G})$ is the Morava $E$-theory spectrum of a height $n$ formal group $\mathbb{G}$, and $\mathcal{K}(E(\mathbb{G}))^{\wedge}_{\ell}$ is its algebraic $K$-theory spectrum completed away from the characteristic of the ground field of $\mathbb{G}$. 

Finally, we prove that at height $1$, TJL duality {\em preserves the $L$-factors}. This means that the automorphic $L$-factor of the $GL_1$-representation associated to $(\mathcal{K}(E(\mathbb{G}))^{\wedge}_{\ell})_*(X)$ by TJL duality is precisely the $p$-local Euler factor in a meromorphic $L$-function whose special values in the left half-plane recover the orders of the $KU$-local stable homotopy groups of $X$. 
\end{abstract}
\maketitle

\tableofcontents

\section{Introduction.}

In this paper, all formal groups and formal modules will be assumed to be one-dimensional. The symbols $\ell,p$ shall always denote {\em distinct} primes.

\subsection{Background on the LT tower and the Jacquet-Langlands correspondence.}
\label{Background on the Lubin-Tate tower...}

Given a formal group $\mathbb{G}$ of finite height $n$ over a perfect field $k$ of characteristic $p$, the {\em Lubin-Tate tower} is a sequence of morphisms of formal schemes
\begin{equation*}\label{lt tower 0} \dots \rightarrow \Def(\mathbb{G})_{\lvl p^2} \rightarrow \Def(\mathbb{G})_{\lvl p} \rightarrow \Def(\mathbb{G}),\end{equation*}
where $\Def(\mathbb{G})$ is the classical Lubin-Tate space \cite{MR0238854}, i.e., the moduli space of deformations of $\mathbb{G}$. The formal scheme $\Def(\mathbb{G})_{\lvl p^m}$ is the moduli space of deformations of $\mathbb{G}$ equipped with Drinfeld level $p^m$-structures \cite{MR0384707}. We recall the full definitions below, in \cref{Review of the Lubin-Tate tower.}.

Consider the $\ell$-adic cohomology\footnote{We are being purposefully vague about what ``$\ell$-adic cohomology'' means, here, since it is a bit subtle: one has to use either the $\ell$-adic vanishing cycle cohomology, or pass to the Raynaud generic fibre and take the {\em compactly supported} \'{e}tale cohomology of the resulting rigid analytic space. These two constructions yield the same cohomology, by results of Berkovich \cite{MR1395723}. We give a brief review of relevant ideas in \cref{Review of the role...}, and further discussion in \cref{Nearby cycles and vanishing cycles...}, where we work out analogous constructions for sheaves of spectra rather than sheaves of abelian groups.} in dimension $n-1$ of the formal scheme $\Def(\mathbb{G})_{\lvl p^m}$, take the colimit as $m\rightarrow \infty$, and tensor up to the algebraic closure $\overline{\mathbb{Q}}_{\ell}$. Write $H^{n-1}$ as shorthand for the resulting $\overline{\mathbb{Q}}_{\ell}$-vector space. Then $H^{n-1}$ has a natural action of $GL_n(\hat{\mathbb{Z}}_p)$, given by permuting the Drinfeld level structures, as well as a natural action of $\Aut(\mathbb{G})$. It was conjectured by Carayol \cite{MR1044827} that this $\overline{\mathbb{Q}}_{\ell}$-linear representation of $GL_n(\hat{\mathbb{Z}}_p)\times \Aut(\mathbb{G})$ ``realizes'' the supercuspidal part of the Jacquet-Langlands correspondence\footnote{The $\ell$-adic Jacquet-Langlands correspondence is a correspondence between certain $\ell$-adic representations of $GL_n(\mathbb{Q}_p)$ and certain $\ell$-adic representations of $D_{1/n,\mathbb{Q}_p}^{\times}$. There is also the $\ell$-adic Langlands correpondence, which is a correspondence between certain $\ell$-adic representations of $GL_n(\mathbb{Q}_p)$ and certain $\ell$-adic representations of the Weil group of $\mathbb{Q}_p$, a certain large subgroup of the absolute Galois group $\Gal(\overline{\mathbb{Q}}_p/\mathbb{Q}_p)$. The $\ell$-adic Langlands correspondence is also realized in the cohomology of the Lubin-Tate tower, if one base-changes the tower up to $\overline{\mathbb{Q}}_p$; see \cref{Review of the role...} below for a brief account, or \cite{MR1876802} for a fuller account.}, in a precise sense, which we now sketch. The Jacquet-Langlands correspondence of Deligne-Kazhdan-Vigneras \cite{MR0771672} and of Rogawski \cite{MR0700135} pairs each irreducible essentially $L^2$-integrable complex representation $\pi$ of $GL_n(\mathbb{Q}_p)$ with an irreducible complex representation $\mathcal{JL}(\pi)$ of the unit group $D_{1/n,\mathbb{Q}_p}^{\times}$ of the Hasse invariant $1/n$ central division algebra over $\mathbb{Q}_p$. 
The endomorphism ring of $\mathbb{G}$ agrees with the maximal compact subring $\mathcal{O}_{D_{1/n,\mathbb{Q}_p}}$ of $D_{1/n,\mathbb{Q}_p}$; see for example Proposition 1.7 of \cite{MR0384707}.
The action of $\Aut(\mathbb{G}) \cong \mathcal{O}_{D_{1/n,\mathbb{Q}_p}}^{\times}\subseteq D_{1/n,\mathbb{Q}_p}^{\times}$ on $H^{n-1}$ extends to an action of $D_{1/n,\mathbb{Q}_p}^{\times}$, and the action of $GL_n(\hat{\mathbb{Z}}_p)$ on $H^{n-1}$ also extends\footnote{There is a very close relationship between smooth representations of $GL_n(\hat{\mathbb{Z}}_p)$ and smooth representations of $GL_n(\mathbb{Q}_p)$, since the former is the maximal compact open subgroup of the latter, and since the Iwasawa decomposition expresses $GL_n(\mathbb{Q}_p)$ as the product of $GL_n(\hat{\mathbb{Z}}_p)$ with the group of upper-triangular matrices. See section 1.5 of \cite{MR0546593} for the decomposition of every smooth representation of $GL_n(\mathbb{Q}_p)$ into the direct sum of its isotypic components, which are indexed by the compact open subgroups of $GL_n(\mathbb{Q}_p)$. It is also true that the restriction functor from smooth representations of $GL_n(\mathbb{Q}_p)$ to smooth representations of $GL_n(\hat{\mathbb{Z}}_p)$ has an exact {\em right} adjoint, by a dual version of Frobenius reciprocity, as in section 1.8 of \cite{MR0546593}. In this paper, we will switch between discussing representations of $GL_n(\mathbb{Q}_p)$ and representations of $GL_n(\hat{\mathbb{Z}}_p)$ whenever it is convenient for the exposition.

This is also an opportune moment to recall the definition of smoothness for representations. A representation $V$ of a $p$-adic group like $GL_n(\mathbb{Q}_p)$ or $GL_n(\hat{\mathbb{Z}}_p)$ is called {\em smooth} if, for every element $v\in V$, the subgroup stabilizing $v$ is open. This definition, and some others given in this paper, are standard in the representation theory of $p$-adic groups. However, we expect that the audience for this paper is likely to include topologists who know very little about representation theory of $p$-adic groups (a description which also fits the author). So we think it may be useful to provide these definitions.} to an action of $GL_n(\mathbb{Q}_p)$. Carayol's conjecture was essentially that, for supercuspidal\footnote{An irreducible smooth representation of $GL_n(\mathbb{Q}_p)$ is called {\em supercuspidal} if it does not occur as a subrepresentation of an induced representation from the Levi subgroup of a proper parabolic subgroup of $GL_n(\mathbb{Q}_p)$. For the sake of this paper, what matters is that the supercuspidal representations are the smooth irreducible representations which are generally hardest to construct and classify, and furthermore, by a theorem of Jacquet \cite{MR0291360} (cf. Theorem 2.5 of \cite{MR0546593}), {\em all} of the irreducible admissible representations of $GL_n(\mathbb{Q}_p)$ occur as subrepresentations of supercuspidals induced from various parabolic subgroups. The case for the difficulty and importance of constructing the supercuspidal representations is made strongly in \cite{MR0546593} and in \cite{MR4649682}, which are each also good introductions to the topic. The former reference \cite{MR0546593} uses an older term, ``absolutely cuspidal,'' for what is today called ``supercuspidal.''} $\overline{\mathbb{Q}}_{\ell}$-linear $\pi$, there is an isomorphism of $\overline{\mathbb{Q}}_{\ell}$-linear representations of $D_{1/n,\mathbb{Q}_p}^{\times}$:
\begin{align*}
 \mathcal{JL}(\pi)^{\oplus n} 
  &\cong \hom_{\overline{\mathbb{Q}}_{\ell}[GL_n(\mathbb{Q}_p)]}(H^{n-1},\pi),\end{align*} and furthermore every summand of $H^{n-1}$ isomorphic to $\mathcal{JL}(\pi)$ arises in this way. (This is a somewhat unfaithful retelling of Carayol's conjecture, which was really phrased in terms of Drinfeld's $p$-adic hyperplane complement spaces rather than in terms of the Lubin-Tate tower, although the two were later \cite{MR2441311} shown to yield the same representations.) 
See Theorem 2.5.2 of \cite{MR2383890} for an especially clear statement in this form. Versions of this conjecture were proven in \cite{MR1876802}, \cite{MR2383890}, and \cite{MR2680204}. See \cref{Review of the role...}, below, for an expository treatment of some of this material, and the first chapter of \cite{MR1876802} for a much fuller discussion. 

\subsection{Background on the role of LT space in stable homotopy theory.}

By a classical result of \cite{MR0238854}, the Lubin-Tate space $\Def(\mathbb{G})$ is isomorphic to the formal spectrum $\Spf W(k)[[u_1, \dots ,u_{n-1}]]$, where $W(k)$ is the Witt ring of $k$. In algebraic topology, this same ring is also known to arise as the coefficient ring of {\em Morava $E$-theory} \cite[0.3.1]{MR782555}, a generalized cohomology theory, written $E(\mathbb{G})^*$. The coefficient ring $E(\mathbb{G})^*(\pt)$ of Morava $E$-theory is a graded ring isomorphic to $W(k)[[u_1, \dots ,u_{n-1}]][u^{\pm 1}]$, with $u$ in degree $2$ and $u_1, \dots ,u_{n-1}$ all in degree $0$. Furthermore, the generalized cohomology theory $E(\mathbb{G})^*$ is complex oriented, i.e., $E(\mathbb{G})^*$ has a theory of Chern classes for complex vector bundles. This yields a formal group law over $E(\mathbb{G})^0(\pt) \cong W(k)[[u_1, \dots ,u_{n-1}]]$ which describes the effect of the first Chern class on a tensor product of line bundles \cite{MR1324104}. This formal group law coincides with the universal formal group law over the ring of functions on Lubin-Tate space $\Def(\mathbb{G})$. 

Morava $E$-theory plays an important role in computational stable homotopy theory, since in the case $k = \mathbb{F}_{p^n}$, there exist spectral sequences \cite{MR2030586}
\begin{align*}
 E_2^{s,t} &\cong H^s\left(\Aut(\mathbb{G});E(\mathbb{G})_t(X)\right) 
  \Rightarrow \pi_{t-s}\left( (E(\mathbb{G})\wedge X )^{h\Aut(\mathbb{G})}\right)\mbox{\ \ \ \ \ \ and} \\
 E_2^{s,t} &\cong H^s\left( \Gal(\mathbb{F}_{p^n}/\mathbb{F}_p); \pi_t\left( (E(\mathbb{G})\wedge X )^{h\Aut(\mathbb{G})}\right)\right)
  \Rightarrow \pi_{t-s}\left(L_{K(n)}X\right)
\end{align*}
for any finite CW-complex $X$. Here and elsewhere in this paper, the superscript $h\Aut(\mathbb{G})$ denotes the continuous homotopy fixed-points of an action of $\Aut(\mathbb{G})$ on a spectrum. The notation $L_{K(n)}X$ denotes the Bousfield localization (\cite{MR551009},\cite{MR737778}) of $X$ at the height $n$ Morava $K$-theory $K(n)_*$. 

Using the long exact sequence induced in homotopy groups by the homotopy pullback square
\[\xymatrix{
 L_{E(n)}X \ar[r]\ar[d] & L_{K(n)}X \ar[d]\\
 L_{E(n-1)}X \ar[r] & L_{E(n-1)}L_{K(n)}X,}\]
one can, in principle, inductively calculate the homotopy groups of $L_{E(n)}X$ for $n=1, 2, 3, \dots $ starting from $\pi_*(L_{E(0)}X)\cong \mathbb{Q}\otimes_{\mathbb{Z}}\pi_*(X)$ and the homotopy groups of the localizations $\pi_*(L_{K(1)}X),\pi_*(L_{K(2)}X),\dots$. Here $L_{E(n)}$ is Bousfield localization at the Johnson-Wilson $E$-theory $E(n)_*$. These localizations are standard in computational stable homotopy theory. For the sake of this paper, rather than discussing precisely what these localizations are, and what the Morava $K$-theories and the Johnson-Wilson $E$-theories are, we prefer to simply refer to the chromatic convergence theorem \cite{MR1192553}, which states that the homotopy groups of the homotopy limit $\holim_{n\rightarrow \infty} L_{E(n)}X$ agrees with the $p$-local stable homotopy groups of the finite CW-complex $X$. 

The moral is that the cohomology of the action of $\Aut(\mathbb{G})$ acting on the Morava $E$-theory $E(\mathbb{G})_*(X)$ is the input for a widely-studied computational method---a sequence of spectral sequences and long exact sequences---for calculating the stable homotopy groups of a finite CW-complex $X$. 

\subsection{Topological nonrealizability of the classical Lubin-Tate tower.}

So far in this introduction, we have explained that:
\begin{itemize}
\item The Lubin-Tate tower plays an important role in the construction of the $\ell$-adic Jacquet-Langlands correspondence.
\item The {\em base} $\Def(\mathbb{G})$ of the Lubin-Tate tower plays an important role in the calculation of stable homotopy groups of finite CW-complexes.
\end{itemize}
Stable homotopy theorists have long discussed the question of whether there is some deeper connection between these two observations, perhaps some kind of ``$\ell$-adic topological Jacquet-Langlands correspondence'' which might potentially aid in either understanding or calculating stable homotopy groups. The purpose of this paper is to construct such a correspondence and to show that it does indeed aid in understanding stable homotopy groups. 

Before stating the main theorems, we explain the point at which earlier investigations have gotten stuck. From the work of Goerss--Hopkins \cite{MR2125040}, it is known that there exists a particularly natural presheaf $\mathcal{O}^{\topsym}$ of complex-oriented $E_{\infty}$-ring spectra on the small \'{e}tale site of the formal scheme $\Def(\mathbb{G})$, whose $E_{\infty}$-ring spectrum of global sections is the Morava $E$-theory spectrum $E(\mathbb{G})$. In spectral algebraic geometry, 
the pair $(\Def(\mathbb{G}),\mathcal{O}^{\topsym})$ would be called a ``nonconnective spectral formal scheme.'' It is natural to try to lift $\mathcal{O}^{\topsym}$ to a presheaf of $E_{\infty}$-ring spectra on $\Def(\mathbb{G})_{\lvl p}$, then to $\Def(\mathbb{G})_{\lvl p^2}$, and so on. However, since $\Def(\mathbb{G})_{\lvl p^{m+1}} \rightarrow \Def(\mathbb{G})_{\lvl p^m}$ is not \'{e}tale, the existence of such a lift is not automatic. Indeed, it was established by \cite{MR1718085} (cf. \cite{MR4099803} for further work and discussion in that direction) that no such lift to $\Def(\mathbb{G})_{\lvl p}$ can exist when $\mathbb{G}$ has $p$-height $1$. This is what is meant by saying that ``there is no topological realization of the Lubin-Tate tower.''

\subsection{Topological realizability of the degenerating Lubin-Tate tower.}

In this paper, we overcome that obstacle by an extremely simple trick: rather than considering the moduli space $\Def(\mathbb{G})_{\lvl p^m}$ of deformations of $\mathbb{G}$ equipped with a Drinfeld level $p^m$ structure, we allow level structures which are ``degenerate'' in various ways. In \cref{The degenerating Lubin-Tate tower.} we (jointly with M. Strauch) define ``degenerating level structures.'' There is a formal scheme $\Def(\mathbb{G})_{\lvl p^m}^{\degen}$ parameterizing deformations of $\mathbb{G}$ equipped with {\em degenerating} level structures, and these formal schemes fit into a tower, the {\em degenerating Lubin-Tate tower,} into which the classical Lubin-Tate tower embeds $GL_n(\hat{\mathbb{Z}}_p)\times \Aut(\mathbb{G})$-equivariantly\footnote{As a sub-tower of the degenerating Lubin-Tate tower, the classical Lubin-Tate tower is closed under the action of $GL_n(\mathbb{Z}/p^m\mathbb{Z})\times \Aut(\mathbb{G})$ for each $m$, but it is {\em not} closed under the action by power operations on the degenerating Lubin-Tate tower that arise from the $E_{\infty}$-ring realization of $\Def(\mathbb{G})_{\lvl p^m}^{\degen}$ for each $m$.}:
\begin{equation*}\label{lt tower 1} \xymatrix{
 \dots \ar[r] & \Def(\mathbb{G})_{\lvl p^2}\ar[r]\ar@{^{(}->}[d] & \Def(\mathbb{G})_{\lvl p} \ar[r]\ar@{^{(}->}[d] & \Def(\mathbb{G}) \ar[d]^{=} \\
 \dots \ar[r] & \Def(\mathbb{G})_{\lvl p^2}^{\degen}\ar[r] & \Def(\mathbb{G})_{\lvl p}^{\degen} \ar[r] & \Def(\mathbb{G})^{\degen}. }\end{equation*}
In \cref{Topological realizability...}, we find that, as a straightforward consequence of the work \cite{MR1758754} of Hopkins--Kuhn--Ravenel, the Goerss--Hopkins presheaf $\mathcal{O}^{\topsym}$ of $E_{\infty}$-ring spectra on $(\Def(\mathbb{G})^{\degen}_{\lvl p^0})_{\et} =  (\Def(\mathbb{G}))_{\et}$ {\em does} lift to a presheaf of $E_{\infty}$-ring spectra on $(\Def(\mathbb{G})_{\lvl p}^{\degen})_{\et}$, and indeed to $(\Def(\mathbb{G})_{\lvl p^m}^{\degen})_{\et}$ for all $m\geq 1$. To summarize: the classical Lubin-Tate tower does not admit a topological realization, but the classical Lubin-Tate tower embeds into the degenerating Lubin-Tate tower, which {\em does} admit a topological realization.

Degenerating level structures are more elementary than Drinfeld level structures. While we prove some new things about them (and give them a name) in this paper, some types of degenerating level structures have certainly been studied before this paper, e.g. in \cite{MR1344767}, \cite{MR1758754}, and \cite{MR1473889}.

The material in this paper on the cohomology and the topological realizability of the degenerating Lubin-Tate tower is joint work with Matthias Strauch. We have elected to present that material as a self-contained appendix in this paper. That appendix does not logically rely on the material which comes earlier in the paper. To be clear, much of \cref{l-adic JL cohomology theory} and \cref{l-adic TJL duality} relies on the results in the appendix. Two subsections of the appendix, \cref{Review of the Lubin-Tate tower.} and \cref{Review of the role...}, are purely expository sections, which provide a review of known results that we think some readers may find useful. {\em We recommend that readers browse the appendix before beginning to read \cref{l-adic JL cohomology theory} of this paper.}

\subsection{Spectral vanishing cycles, and $\ell$-adic Jacquet-Langlands cohomology.}

Having overcome the obstacle of non-realizability, the next step is to try to mimic, in the topological setting, the construction of the cohomology group $H^{n-1}$ which realizes the $\ell$-adic Jacquet-Langlands correspondence. Clearly we have to use the degenerating Lubin-Tate tower in place of the classical Lubin-Tate tower. However, at this stage we also have to be honest about what kind of cohomology group $H^{n-1}$ is: it is the colimit, over $m$, of the $\ell$-adic compactly supported cohomology of the rigid analytic (Raynaud) generic fibre of $\Def(\mathbb{G})_{\lvl p^m}$, or equivalently (by \cite{MR1395723}), the $\ell$-adic vanishing cycle cohomology of the formal scheme $\Def(\mathbb{G})_{\lvl p^m}$. Facing the decision to either develop some theory of spectral rigid analytic spaces, or some theory of vanishing cycles for spectral formal schemes, we have elected for the latter option. In \cref{The stalk spectral...} and in \cref{Nearby cycles and vanishing cycles...}, we define vanishing cycles for presheaves of spectra on the small \'{e}tale site of a formal scheme over $\Spec$ of a discrete valuation ring, along with a spectral sequence \eqref{vanishing cycles stalk ss 1} for calculating the homotopy groups of the stalk of the vanishing cycles. The constructions are nontrivial, but the papers \cite{MR1262943},\cite{MR1395723} of Berkovich have already done the work of setting up a theory of vanishing cycles for formal schemes which, we find, generalizes from abelian presheaves to spectral presheaves without great difficulty. Since vanishing cycles are not at all a standard tool used in stable homotopy theory, we provide in \cref{Nearby cycles and vanishing cycles...} an introduction to the basic ideas, geared toward easy generalization to the spectral setting.

In \cref{l-adic JL cohomology theory}, we construct the {\em $\ell$-adic Jacquet-Langlands cohomology theory} $JL_{\ell}(\mathbb{G})^*$ associated to the formal group $\mathbb{G}$. The generalized cohomology theory $JL_{\ell}(\mathbb{G})^*$ is represented by a spectrum $JL_{\ell}(\mathbb{G})$ constructed in several steps, as follows: 
\begin{itemize} 
\item For each integer $m\geq 0$, due to the topological realizability of the degenerating Lubin-Tate tower, we have the spectral formal scheme $(\Def(\mathbb{G})_{\lvl p^m}^{\degen},\mathcal{O}^{\topsym})$. 
\item For each integer $j\geq 1$, take the mod $\ell^j$ algebraic $K$-theory of $\mathcal{O}^{\topsym}$ ``sectionswise,'' i.e., form the presheaf of spectra on the \'{e}tale site of $\Def(\mathbb{G})_{\lvl p^m}^{\degen}$ whose value on an \'{e}tale open $U$ is $S/\ell^j \wedge \mathcal{K}(\mathcal{O}^{\topsym}(U))$. Here $S/\ell^j$ denotes the mod $\ell^j$ Moore spectrum.
\item Fibrantly replace (``sheafify'') that presheaf, then take the vanishing cycles presheaf of spectra, then take the stalk at the closed point. 
\item Finally, take the homotopy limit as $j\rightarrow\infty$, then the homotopy colimit as $m\rightarrow\infty$. 
\end{itemize}
The result is the spectrum $JL_{\ell}(\mathbb{G})$. A more leisurely step-by-step account, together with the motivations for the steps, is given in \cref{l-adic JL cohomology theory}. 

The spectrum $JL_{\ell}(\mathbb{G})$ is equipped with a natural action of $GL_n(\hat{\mathbb{Z}}_p)\times \Aut(\mathbb{G})$, where again $n$ is the $p$-height of the formal group $\mathbb{G}$. The spectrum $JL_{\ell}(\mathbb{G})$ is also nonconnective: in the ``vanishing cycles stalk spectral sequence'' \eqref{vanishing cycles stalk ss 1} which we use to analyze the homotopy groups of $JL_{\ell}(\mathbb{G})$, the $\ell$-adic cohomology groups of $\Def(\mathbb{G})^{\degen}_{\lvl p^m}$ in {\em positive} degrees contribute to the homotopy groups of $JL_{\ell}(\mathbb{G})$ in {\em negative} degrees. In particular, in Theorem \ref{main thm}, we show that the $\ell$-adic Jacquet-Langlands correspondence for $\mathbb{Q}_p$ is realized in the rational stable homotopy group $\overline{\mathbb{Q}}_{\ell}\otimes_{\hat{\mathbb{Z}}_{\ell}} \pi_{1-n}\left( JL_{\ell}(\mathbb{G})\right)$. 
This is accomplished by showing that the cohomology group $H^{n-1}$ studied by Carayol and others is in fact a $GL_n(\hat{\mathbb{Z}}_p)\times \Aut(\mathbb{G})$-equivariant summand in $\overline{\mathbb{Q}}_{\ell}\otimes_{\hat{\mathbb{Z}}_{\ell}} \pi_{1-n}\left( JL_{\ell}(\mathbb{G})\right)$. Some work is required to demonstrate this: we must make a calculation in algebraic $K$-theory, Theorem \ref{splitting of k-thy spectrum}, as well as a splitting of the $\ell$-adic compactly supported cohomology of the Raynaud generic fibre of $\Def(\mathbb{G})^{\degen}_{\lvl p^m}$ for each $m$, by showing in Theorem \ref{coh splitting thm} that the inclusion $\Def(\mathbb{G})_{\lvl p^m} \hookrightarrow \Def(\mathbb{G})^{\degen}_{\lvl p^m}$ induces a split $GL_n(\hat{\mathbb{Z}}_p)\times\Aut(\mathbb{G})$-equivariant monomorphism in $\ell$-adic compactly supported cohomology on the generic fibres. In the appendix, we have included several illustrative examples, Examples \ref{ht 1 example 1}, \ref{ht 1 example 2}, and \ref{ht 2 example}, to demonstrate how this cohomological splitting works, and to describe the corresponding geometry in the degenerating Lubin-Tate tower.

\subsection{The topological Jacquet-Langlands dual of a spectrum.}

We continue to let $\mathbb{G}$ be a formal group of finite height over a perfect field of characteristic $p$. Consider the generalized homology theory $(\mathcal{K}(E(\mathbb{G}))^{\wedge}_{\ell})_*$ represented by the $\ell$-adically complete algebraic $K$-theory spectrum of the Morava $E$-theory spectrum $E(\mathbb{G})$. This generalized homology theory takes its values in $\hat{\mathbb{Z}}_{\ell}$-linear representations of $\Aut(\mathbb{G})$. We also have a generalized cohomology theory, valued in $\overline{\mathbb{Q}}_{\ell}$-linear representations of $GL_n(\hat{\mathbb{Z}}_p)$, which sends a spectrum $X$ to the homotopy groups of the homotopy fixed-point spectrum
\begin{align}\label{tjl dual of X}  F(\mathcal{K}(E(\mathbb{G}))^{\wedge}_{\ell}\wedge X,H\overline{\mathbb{Q}}_{\ell}\wedge_{\hat{S}_{\ell}} JL_{\ell}(\mathbb{G}))^{h\Aut(\mathbb{G})}.\end{align}
Here $F(\mathcal{K}(E(\mathbb{G}))^{\wedge}_{\ell}\wedge X,H\overline{\mathbb{Q}}_{\ell}\wedge_{\hat{S}_{\ell}} JL_{\ell}(\mathbb{G}))$ denotes the function spectrum of maps from $\mathcal{K}(E(\mathbb{G}))^{\wedge}_{\ell}\wedge X$ to $H\overline{\mathbb{Q}}_{\ell}\wedge_{\hat{S}_{\ell}} JL_{\ell}(\mathbb{G})$. 
We will refer to the spectrum \eqref{tjl dual of X} as the {\em $\ell$-adic topological Jacquet-Langlands dual of $X$ with respect to $\mathbb{G}$}, or {\em TJL dual of $X$} for short.

As a consequence of Theorem \ref{main thm}, we show in \cref{l-adic TJL duality} that, for each integer $i$ and each irreducible subrepresentation $\rho$ of the $\overline{\mathbb{Q}}_{\ell}$-linear representation $\overline{\mathbb{Q}}_{\ell}\otimes_{\hat{\mathbb{Z}}_{\ell}} \pi_i(\mathcal{K}(E(\mathbb{G}))^{\wedge}_{\ell}\wedge X)$ of $\Aut(\mathbb{G})$ whose ``ordinary'' (i.e., non-topological) Jacquet-Langlands dual $\mathcal{JL}(\rho)$ is supercuspidal, $\mathcal{JL}(\rho)$ occurs as a factor in the $(1-n-i)$th homotopy group of the TJL dual \eqref{tjl dual of X} of $X$. 
Here $\mathcal{JL}(\rho)$ denotes the representation of $GL_n(\hat{\mathbb{Z}}_p)$ associated to $\rho$ by the Jacquet-Langlands correspondence studied in \cite{MR0771672}, \cite{MR1876802}, \cite{MR0700135}, and many other references. 

Speaking a bit more loosely, but perhaps getting the point across better: {\em taking the TJL dual of a spectrum $X$ recovers, on the rational stable homotopy groups, the supercuspidal representations of $GL_n$ which are Jacquet-Langlands duals of the irreducible summands of the action of the Morava stabilizer group $\Aut(\mathbb{G})$ on the $\ell$-complete $\mathcal{K}(E(\mathbb{G}))$-homology of $X$.}


However, there are many other $\ell$-adic representations of $\Aut(\mathbb{G})$ that occur in the homotopy group $\pi_{1-n-i}\left(\left(  F(X,H\overline{\mathbb{Q}}_{\ell}\wedge JL_{\ell}(\mathbb{G}))\right)^{h\Aut(\mathbb{G})}\right)$ of \eqref{tjl dual of X}, aside from those with supercuspidal dual representations. The degenerating Lubin-Tate tower is much larger than the classical Lubin-Tate tower, with many redundant copies of ``shifts'' of the Lubin-Tate tower alongside the canonical copy of the classical Lubin-Tate tower. These extra components of the degenerating Lubin-Tate tower contribute to its \'{e}tale cohomology and consequently, through the modified vanishing cycles stalk spectral sequence constructed in the proof of Theorem \ref{main thm}, also to the homotopy groups of \eqref{tjl dual of X}. As a consequence, various Tate twists of principal series and non-supercuspidal discrete series representations of $GL_n$ also occur in the rational homotopy groups of \eqref{tjl dual of X}. Unlike what occurs in the \'{e}tale cohomology of the classical Lubin-Tate tower, the supercuspidal representations are not isolated (by cohomological degree) from the non-supercuspidal representations in the \'{e}tale cohomology groups of the degenerating Lubin-Tate tower, or in the homotopy groups of \eqref{tjl dual of X}. These ``noisy'' extra summands are the price we have paid in order to have a topological realization. For $n=1$, we describe the resulting decomposition of the homotopy groups of \eqref{tjl dual of X} as a $GL_1(\hat{\mathbb{Z}}_p)$-representation in \cref{l-adic TJL duality}.


\subsection{At height $1$, TJL duality preserves $L$-factors.}

The $\ell$-adic Langlands correspondence, between certain $\ell$-adic representations of Weil groups and certain $\ell$-adic representations of $GL_n(\mathbb{Q}_p)$, is known to {\em preserve the $L$-factors}: the Artin $L$-factor of a Galois representation agrees with the automorphic $L$-factor of the corresponding automorphic representation\footnote{See \cite{MR2808915} for a textbook treatment for automorphic $L$-factors---i.e., local $L$-functions---of representations of $GL_n(\mathbb{Q}_p)$. However, in this paper, the only theorem we prove involving $L$-factors is Theorem \ref{main automorphic thm}, and that theorem involves only very simple $L$-factors, namely, products of Tate twists of $L$-factors of {\em trivial} representations. Put another way: they are products of factors of the form $(1-p^{w-s})^{-1}$ for various integers $w$. Consequently the reader does not need to be knowledgeable about automorphic representations to read this paper. 

The simple form of these $L$-factors that occur in Theorem \ref{main automorphic thm} is partly due to a topological phenomenon, the rational diagonalizability of the Adams operations on complex $K$-theory. But it is also partly due to the fact that Theorem \ref{main automorphic thm} addresses only the case $n=1$. Any future attempts to prove analogues of Theorem \ref{main automorphic thm} for $n>1$---i.e., attempts to prove that TJL duality preserves $L$-factors at higher heights---will surely need to make a more substantial use of the theory of automorphic representations than the extremely modest engagement with that theory that the reader can find in this paper.}. One would like to know whether there is any sense in which TJL duality also preserves $L$-factors. On the automorphic side (i.e., on the side \eqref{tjl dual of X} of TJL duality on which there is a representation of $GL_n(\hat{\mathbb{Z}}_p)$), one can associate an automorphic $L$-factor to the representation of $GL_n$. It is not clear what $L$-factor one ought associate to the CW-complex $X$ on the homological side $(\mathcal{K}(E(\mathbb{G}))^{\wedge}_{\ell})_*(X)$ of the TJL duality. 

However, if $\mathbb{G}$ has $p$-height $1$, there is a good candidate for such an $L$-factor: the preprint \cite{salch2023kulocal} proposes a theory of ``$KU$-local zeta-functions'' for finite CW-complexes. Suppose that on the homological side of TJL duality, for $\mathbb{G}$ of $p$-height $1$, we take the appropriate $L$-factor to be the $p$-local Euler factor in the ``provisional $KU$-local zeta-function'' of the finite CW-complex $X$, as defined in \cite{salch2023kulocal}. In Theorem \ref{main automorphic thm}, we prove that, if the cohomology $H^*(X;\mathbb{Z})$ is torsion-free and concentrated in even dimensions, then {\em TJL duality preserves the $L$-factors}.

Here is a striking consequence, which combines the results in this paper with one of the main results of the preprint \cite{salch2023kulocal}. Let $\mathbb{G}$ denote the formal group of complex topological $K$-theory $KU^*$, i.e., $\mathbb{G}$ is the multiplicative formal group over $\mathbb{Z}$. For each prime $p$, let $\mathbb{G}\otimes_{\mathbb{Z}} \overline{\mathbb{F}}_p$ be the base-change of the formal group $\mathbb{G}$ to the field $\overline{\mathbb{F}}_p$. Given a finite CW-complex $X$, consider the product, over all primes $p$, of the $p$-local automorphic $L$-factor of the representation of $GL_1(\mathbb{Q}_p)$ which is the TJL dual\footnote{This statement is slightly oversimplified: see \cref{l-adic TJL duality} for a precise statement. If $n=1$, the TJL dual $\pi_*\left( F(\mathcal{K}(E)^{\wedge}_{\ell}\wedge X,H\overline{\mathbb{Q}}_{\ell}\wedge_{\hat{S}_{\ell}} JL_{\ell}(\mathbb{G}))^{h\Aut(\mathbb{G})}\right)$ of $(\mathcal{K}(E(\mathbb{G})^{\wedge}_{\ell})_*(X)$ is really an enormous direct sum of many redundant copies of the same representation of $GL_1(\hat{\mathbb{Z}}_p)$. This redundancy is a consequence of the fact that we have used the degenerating Lubin-Tate tower in place of the classical Lubin-Tate tower, and the cohomology of the degenerating Lubin-Tate tower includes the cohomology of the Lubin-Tate tower as a distinguished summand, along with other summands. In the case $n=1$ these summands are all isomorphic to each other; see Example \ref{ht 1 example 2} for details. When we speak of ``the automorphic $L$-factor associated to $X$'' we are simply taking the automorphic $L$-factor of one copy among the countably infinitely many copies of the same representation of $GL_1(\hat{\mathbb{Z}}_p)$ in the TJL dual of $X$.} of $X$. Then this product analytically continues to a meromorphic function $L_{\mathbb{G}}(s,X)$ on the complex plane. Furthermore, special values of that meromorphic function in a suitable left-hand half plane describe the orders of the $KU$-local stable homotopy groups of the Spanier-Whitehead dual $DX$ of $X$:
\begin{align*}
 \left| \pi_{-2k-1}(L_{KU}DX)\right|
  &= \denom L_{\mathbb{G}}(1-k,X)
\end{align*}
up to powers of $2$ and powers of irregular primes, for all integers $k \geq b+1$. Here $b$ is the greatest integer $i$ such that $H^{2i}(X;\mathbb{Q})$ is nontrivial. This is the content of Theorem \ref{main automorphic cor}.

The point is that TJL duality actually says something of interest to a topologist who doesn't already care about Langlands correspondences or vanishing cycles or anything else in this paper except homotopy groups: localized at a complex-oriented theory of height $1$, the orders of stable homotopy groups of a large class of finite CW-complexes are describable in terms of values of $L$-functions that you can produce using TJL duality. We hope that generalizations of Theorems \ref{main automorphic thm} and \ref{main automorphic cor} also hold for formal groups of height $>1$, but that is a question for a later paper.

We remark that, for expository purposes, we have written this paper almost exclusively about Jacquet-Langlands correspondences and not Langlands correspondences. But an ``$\ell$-adic topological Langlands duality'' is constructible using the same methods as described in this paper, together with standard ideas about Galois actions on the cohomology of Lubin-Tate tower. The essential ideas are as follows. 
By replacing the (rigid analytic generic fibre of) degenerating Lubin-Tate space $\Def(\mathbb{G})^{\degen}_{\lvl p^m}$ with the base-changed version $\Def(\mathbb{G})_{\lvl p^m}^{\degen}\times_{\mathbb{Q}_p^{nr}}\overline{\mathbb{Q}}_p$, the action of $GL_n(\mathbb{Q}_p)\times \Aut(\mathbb{G})$ on the cohomology of the degenerating Lubin-Tate tower can be extended to an action of $GL_n(\mathbb{Q}_p)\times \Aut(\mathbb{G})\times W_{\mathbb{Q}_p}$, where $W_{\mathbb{Q}_p}$ is the Weil group of $\mathbb{Q}_p$. This yields $\ell$-adic Galois representations and corresponding Artin $L$-factors, realizing the $\ell$-adic Langlands (as opposed to Jacquet-Langlands) correspondence for $\mathbb{Q}_p$ in the homotopy groups of the resulting extended Jacquet-Langlands cohomology spectrum. Some relevant background discussion is given in \cref{Background on the Lubin-Tate tower...}, but in this paper, for simplicity, we mostly avoid discussing the Galois representations, focusing our discussion instead on the $\Aut(\mathbb{G})$-representations and the $GL_n(\mathbb{Z}_p)$-representations.


In this paper, we have been fortunate to be able to take advantage of the good ideas and hard work of many others who have developed various powerful methods and results which are used in this paper largely as a ``black box.'' For example:
\begin{itemize}
\item We take advantage of many results on Carayol's program, in particular those proven by Harris--Taylor \cite{MR1876802}, Strauch \cite{MR2383890}, and Mieda \cite{MR2680204}.
\item Once you have the idea to consider the degenerating Lubin-Tate tower, its topological realizability is an easy consequence of the work \cite{MR1758754} of Hopkins--Kuhn--Ravenel.
\item We use various tools for working with presheaves of spectra on Grothendieck sites, developed by Thomason \cite{MR826102} and developed further by Jardine \cite{MR3309296}. 
We are saved from from having to enter much technically deeper waters by never having to consider any version of quasicoherent modules over rigid analytic spaces: abelian sheaves and spectral presheaves on the relevant rigid analytic spaces are enough for our purposes in this paper.
\item We use the work of Berkovich \cite{MR1262943}, \cite{MR1395723} on vanishing cycle sheaves for formal schemes.
\item Various results in algebraic $K$-theory: Suslin rigidity \cite{MR934225}, Gabber rigidity \cite{MR1156502}, and Blumberg--Mandell's version \cite{MR2413133} of the localization sequence \cite{MR802796} for Waldhausen $K$-theory.
\end{itemize}

\begin{acknowledgements}
This paper took a very long time to come together. In 2009, the author and Jack Morava ran a ``Topological Langlands'' seminar at Johns Hopkins University, in which we and the other participants (Abhishek Banerjee, Romie Banerjee, Chenghao Chu, Katia Consani, Snigdhayan Mahanta, and Rekha Santhanam---we apologize if we have forgotten anyone!) tried to understand some relationships between homotopy-theoretical aspects and representation-theoretic aspects of Morava $E$-theories. The seminar petered out as we got stuck on the problem of topological nonrealizability of the Lubin-Tate tower. During a workshop on $p$-divisible groups hosted at Urbana-Champaign around the same time, the author learned from Matt Ando and Charles Rezk that they were quite familiar with this problem. The author benefitted from their insightful comments and generous explanations. By 2011 the author had an early, clumsy version of the topological realizability result in this paper, using the Drinfeld tower of \cite{MR0422290}, but felt it was not really the ``right'' approach, as there seemed to be no noteworthy homotopy-theoretic consequences. In 2017, the author met Matthias Strauch at a conference hosted at Indiana University, and we began discussing this problem. These discussions continued into 2018 when the author spent part of a sabbatical semester at IU, and Strauch and the author worked out the theory of degenerating level structures, including the topological realizability and cohomological splitting results, documented in the appendix to this paper. Finally, Gabe Angelini-Knoll helped the author with references for facts about topological cyclic homology and localization.

The author is grateful to all of the above people, as well as to everyone who has patiently listened to the author prattle on about this subject over the years, and everyone who has waited a very, very long time to see anything writen about it. More than anyone else, Jack Morava and Matthias Strauch have played critical roles in this project, and the author is deeply grateful to them. 
\end{acknowledgements}

\begin{convention}\label{main conventions}\leavevmode
\begin{itemize}
\item As already stated: all formal groups and formal modules in this paper are implicitly understood to be one-dimensional. The symbols $p,\ell$ shall always denote distinct prime numbers.
\item
Throughout, $A$ will be the ring of integers in a finite extension of $\mathbb{Q}_p$. We fix a uniformizer $\varpi$ for $A$.
\item
When $\mathbb{G}$ is a formal $A$-module over a complete local commutative $A$-algebra $B$ with maximal ideal $\mathfrak{m}_B$, we will write $\mathbb{G}(\mathfrak{m}_B)$ for the resulting group consisting of the elements of $\mathfrak{m}_B$ equipped with the group structure and $A$-action given by $\mathbb{G}$.
We will reserve the symbol $n$ to always mean the $A$-height of $\mathbb{G}$, and we will always assume that $n$ is finite and positive.
\item It will frequently happen that we will have a formal module $\mathbb{G}$ and a deformation $\tilde{\mathbb{G}}$ of $\mathbb{G}$ to an unspecified local Artinian ring. We adopt the convention that $\mathfrak{m}_{\tilde{\mathbb{G}}}$ denotes the maximal ideal of the ground ring of $\tilde{\mathbb{G}}$.
\item
Given a positive integer $i$, we write $S/i$ for the mod $i$ Moore spectrum.
\item
Throughout, we use the framework for (pre)sheaves of spectra due to Thomason \cite{MR826102} and developed further by Jardine \cite{MR3309296}. In this framework, a model structure is used to encode homotopical information. Certainly all the constructions and arguments involving (pre)sheaves of spectra in this paper can equally well be carried out in the framework of Lurie's Derived Algebraic Geometry series, which begins with \cite{dagi}. In that framework, rather than a model structure, simplicial methods are used to encode homotopical data. We chose to write this paper using Thomason's framework because we imagine that some readers outside of homotopy theory might find this paper more digestible as a result. We apologize to readers who would have preferred we write this paper using the $\infty$-categorical framework instead.
\end{itemize}
\end{convention}

\section{Vanishing cycles.}

Vanishing cycle sheaves play a central role in the construction of the cohomology groups that realize the $\ell$-adic Jacquet-Langlands correspondence for a local field. In this section we develop some machinery for handling vanishing cycles sheaves {\em of spectra,} in the sense of stable homotopy theory. This machinery is used in \cref{l-adic JL cohomology theory} and \cref{l-adic TJL duality}. 

\subsection{The stalk spectral sequence of a morphism of sites.}
\label{The stalk spectral...}

Suppose we are given Grothendieck sites $(\mathcal{C}_1,\tau_1)$ and $(\mathcal{C}_2,\tau_2)$. A {\em homomorphism of sites} is a functor $f: \mathcal{C}_1\rightarrow \mathcal{C}_2$ such that, if $\{ U_i \stackrel{\phi_i}{\longrightarrow} U\}$ is a covering family in $\tau_1$, then $\{ f(U_i) \stackrel{f(\phi_i)}{\longrightarrow} f(U)\}$ is a covering family in $\tau_2$. 

Write $\Sp\Sh(\mathcal{C},\tau)$ for the category of presheaves of spectra on a site $(\mathcal{C},\tau)$. We refer to \cite{MR3309296} for the Jardine model structure on $\Sp\Sh(\mathcal{C},\tau)$. For our purposes, its most important property is that the fibrant objects are those presheaves of spectra which satisfy the appropriate homotopical analogue of the sheaf axiom: for every open $U$, the value of a fibrant presheaf $\mathcal{F}$ at $U$ is weakly equivalent to the totalization of the evaluation of $\mathcal{F}$ on the \v{C}ech nerve of any open cover of $U$. Consequently,  in the setting of spectral presheaves, fibrant replacement in the Jardine model structure on $\Sp\Sh(\mathcal{C},\tau)$ is the correct analogue of sheafification. When convenient, we will sometimes write ``sheaf of spectra'' as shorthand for ``fibrant presheaf of spectra.''

A homomorphism of sites $f: (\mathcal{C}_1,\tau_1)\rightarrow (\mathcal{C}_2,\tau_2)$ induces two functors on the associated categories of presheaves of spectra. The functor $f_*: \Sp\Sh(\mathcal{C}_2,\tau_2)\rightarrow \Sp\Sh(\mathcal{C}_1,\tau_1)$ sends a presheaf $\mathcal{F}$ to the composite 
\[ \mathcal{C}_1^{\op}\stackrel{f^{\op}}{\longrightarrow} \mathcal{C}_2^{\op}\stackrel{\mathcal{F}}{\longrightarrow} \Sp .\]
The functor $f^*: \Sp\Sh(\mathcal{C}_1,\tau_1)\rightarrow \Sp\Sh(\mathcal{C}_2,\tau_2)$ is left adjoint to $f_*$. See paragraph 1.26 of \cite{MR826102} for this material\footnote{It is deliberate, and for good reason, that $f_*$ is of the opposite variance of $f$ itself. This is standard among algebraic geometers, but we hope this paper's audience will include homotopy theorists who will not already have thought about site homomorphisms, so we explain the strange variance of $f_*$. Consider the case of a scheme morphism $g: X \rightarrow Y$. Then $g$ induces a covariant pushforward functor $g_*: \Ab(X_{\Zar}) \rightarrow  \Ab(Y_{\Zar})$ on the categories of sheaves of abelian groups on $X$ and on $Y$. However, if we regard $g_*$ as being induced by a site morphism, it is induced by a morphism of sites $Y_{\Zar} \rightarrow X_{\Zar}$. So, in order to get $g_*$ to have the correct variance on morphisms of schemes, we must accept that it will be contravariant on morphisms of sites.}.

Now suppose we are given a presheaf $\mathcal{F}$ of spectra on $(\mathcal{C}_2,\tau_2)$. In general there is no single Leray-like spectral sequence which relates the sheaf cohomology of the sheafification of the presheaf of graded abelian groups $R^*f_*\pi_*(\mathcal{F})$ to the homotopy groups of the global sections of the sheafification (i.e., fibrant replacement) of $f_*\mathcal{F}$. Some discussion of this difficulty can be found immediately preceding Lemma 2.10 of \cite{MR826102}. 

However, if we take a {\em stalk} rather than taking global sections, then the situation improves. Recall that a geometric point of a site $(\mathcal{C},\tau)$ is, by definition, a geometric morphism of toposes from the category of sets to the category of sheaves of sets on $(\mathcal{C},\tau)$. A geometric point induces a stalk functor $\Ab\Sh(\mathcal{C},\tau) \rightarrow \Ab$, which is exact; see \cite[\href{https://stacks.math.columbia.edu/tag/00Y3}{Tag 00Y3}]{stacks-project} for a nice presentation. Given a sheaf of abelian groups $\mathcal{F}$ on $(\mathcal{C},\tau)$ and a geometric point $x$ of $(\mathcal{C},\tau)$, we will write $\mathcal{F}_x$ for the stalk of $\mathcal{F}$ at $x$. 

When we specialize the ideas in the previous paragraph to the case where $(\mathcal{C},\tau)$ is the small Zariski or small \'{e}tale site of some scheme $X$, to each geometric point $x$ of $(\mathcal{C},\tau)$ we can find a geometric point of $X$ (i.e., a scheme morphism $\Spec k \rightarrow X$ with $k$ an algebraically closed field) whose stalk functor is isomorphic to that of $x$, and conversely. 


We adopt the following notation: given a presheaf $\mathcal{G}$ on a site, we will write $\widetilde{\mathcal{G}}$ for its sheafification.
\begin{prop}\label{stalk ss prop}
Suppose we are given a fibrant presheaf of spectra $\mathcal{F}$ on $(\mathcal{C}_2,\tau_2)$ and a geometric point $x$ of $(\mathcal{C}_1,\tau_1)$. We continue to suppose that $f: (\mathcal{C}_1,\tau_1)\rightarrow (\mathcal{C}_2,\tau_2)$ is a morphism of sites. Make the following assumptions\footnote{These assumptions can surely be significantly weakened without ruining the result, but the given level of generality is more than sufficient for all applications in this paper.}:
\begin{enumerate}
\item There exists a uniform lower bound on the homotopy groups of $\mathcal{F}$, i.e., there exists an integer $N$ such that $\pi_i(\mathcal{F}(U))\cong 0$ for all $i<N$ and all $U\in \ob\mathcal{C}_2$.
\item The induced functor on the categories of abelian sheaves $f_*: \Ab\Sh(\mathcal{C}_2,\tau_2) \rightarrow \Ab\Sh(\mathcal{C}_1,\tau_1)$ has finite cohomological dimension, i.e., there exists some integer $M$ such that $R^mf_*: \Ab\Sh(\mathcal{C}_2,\tau_2) \rightarrow \Ab\Sh(\mathcal{C}_1,\tau_1)$ is trivial for all $m>M$.
\end{enumerate}
Then we have a strongly convergent spectral sequence 
\begin{align}
\label{stalk ss 1} E_1^{s,t} \cong \left(R^sf_*\widetilde{\pi_t(\mathcal{F})})\right)_{x}
  &\Rightarrow \pi_{t-s}\left( (f_*\mathcal{F})_x\right),\\
 d_r: E_r^{s,t} &\rightarrow E_r^{s+r,t+r-1}
\end{align}
natural in the variable $\mathcal{F}$. 
\end{prop}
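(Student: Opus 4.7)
The plan is to construct the spectral sequence as the homotopy spectral sequence of a Postnikov tower for $\mathcal{F}$, after applying $f_*$, fibrantly replacing, and then taking the stalk at $x$. First I would build the Postnikov tower $\cdots \rightarrow \tau_{\leq t+1}\mathcal{F} \rightarrow \tau_{\leq t}\mathcal{F} \rightarrow \cdots$ in the category of fibrant presheaves of spectra on $(\mathcal{C}_2,\tau_2)$, chosen so that the fiber of $\tau_{\leq t}\mathcal{F} \rightarrow \tau_{\leq t-1}\mathcal{F}$ is equivalent to the Eilenberg--Mac Lane sheaf $\Sigma^t H\widetilde{\pi_t(\mathcal{F})}$; since $\mathcal{F}$ is already fibrant, I will freely identify $\pi_t(\mathcal{F})$ with $\widetilde{\pi_t(\mathcal{F})}$. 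Hypothesis (1) ensures this tower stabilizes to $0$ below degree $N$, so that the natural map $\mathcal{F} \simeq \holim_t \tau_{\leq t}\mathcal{F}$ is a weak equivalence. Applying $f_*$ together with fibrant replacement in $\Sp\Sh(\mathcal{C}_1,\tau_1)$, and then taking the stalk at $x$, produces a tower of spectra whose associated exact couple is what I would take as the definition of the spectral sequence, abutting to $\pi_{t-s}\bigl((f_*\mathcal{F})_x\bigr)$.

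The $E_1$-identification reduces, for each $t$, to the classical statement that for any sheaf of abelian groups $G$ on $(\mathcal{C}_2,\tau_2)$ and any geometric point $x$ of $(\mathcal{C}_1,\tau_1)$, one has an isomorphism $\pi_{-s}\bigl((Rf_*HG)_x\bigr) \cong (R^s f_* G)_x$. This follows from the definition of $R^s f_*$ via an injective resolution, together with the exactness of the stalk functor on abelian sheaves. Applied to the Postnikov layer $\Sigma^t H\widetilde{\pi_t(\mathcal{F})}$ and shifted appropriately, it yields $E_1^{s,t} \cong (R^s f_* \widetilde{\pi_t(\mathcal{F})})_x$ with differentials of the expected shape $d_r \colon E_r^{s,t} \rightarrow E_r^{s+r,t+r-1}$. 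Naturality in $\mathcal{F}$ is then inherited from the naturality of the Postnikov tower.

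For strong convergence, hypothesis (2) forces $E_1^{s,t} = 0$ whenever $s > M$, while hypothesis (1) forces $E_1^{s,t} = 0$ whenever $t < N$. Consequently, for each fixed total degree $t-s$ only finitely many bidegrees contribute, the spectral sequence lives in a horizontal strip, and strong convergence is automatic. The main obstacle I expect is verifying that stalk formation at the geometric point $x$ commutes with the requisite fibrant replacements of $f_*$ applied to the Postnikov tower, uniformly in $t$; hypothesis (2) is what should let one control $Rf_*$ on each Postnikov layer with uniformly bounded ``cohomological spread,'' enabling a consistent identification of the $E_1$-page across the entire tower and permitting the homotopy limit defining $\mathcal{F}$ to be interchanged with the stalk-and-pushforward operation.
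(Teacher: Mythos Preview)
Your proposal is correct and follows essentially the same route as the paper: form the fibrant sectionswise Postnikov tower of $\mathcal{F}$, apply $f_*$ and fibrant replacement levelwise, take the stalk at $x$, and read off the exact couple; the $E_1$-identification and the strong-convergence argument via the horizontal strip $0\leq s\leq M$ are exactly as in the paper.

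The one point where the paper is more explicit than your sketch is the step you flag as ``the main obstacle,'' namely why the abutment is $\pi_*\bigl((f_*\mathcal{F})_x\bigr)$ rather than merely $\pi_*$ of the homotopy limit of the pushed-forward tower. The paper argues this by observing that the $m$th Postnikov layer $\widetilde{\Sigma^m H\pi_m\mathcal{F}}$, after applying $f_*$ and taking the stalk, has homotopy concentrated in degrees $m-M,\dots,m$ (precisely because $R^sf_*$ vanishes for $s>M$). Hence the effect of the comparison map $(f_*\holim\Post(\mathcal{F}))_x\to\holim(f_*\Post(\mathcal{F}))_x$ on $\pi_i$ depends only on the $(i+M)$th truncation of $\mathcal{F}$, which is a finite Postnikov system, so the map is an equivalence. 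Your phrase ``uniformly bounded cohomological spread'' is exactly this idea; carrying it through as above completes the argument.
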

\begin{proof}
We do not know anywhere in the literature where this spectral sequence has appeared, but its construction uses only the well-known tools of \cite{MR826102} in a straightforward way. 
For any presheaf $\mathcal{G}$ of abelian groups on a site $(\mathcal{C},\tau)$, we may form the presheaf of spectra $H\mathcal{G}$ of $(\mathcal{C},\tau)$, whose value on an object $U$ of $\mathcal{C}$ is the Eilenberg-Mac Lane spectrum $H(\mathcal{G}(U))$. 

Now form the fibrant sectionswise Postnikov tower
\begin{equation}\label{postnikov tower 1}\xymatrix{
 \dots \ar[r] & 
  \mathcal{F}^{\leq 2} \ar[r] &
  \mathcal{F}^{\leq 1} \ar[r] &
  \mathcal{F}^{\leq 0} \ar[r] &
  \dots \\
 &
  \widetilde{\Sigma^2 H\pi_2\mathcal{F}} \ar[u] 
 &
  \widetilde{\Sigma H\pi_1\mathcal{F}} \ar[u] 
 &
  \widetilde{H\pi_0\mathcal{F}}, \ar[u] &
}\end{equation}
i.e., the levelwise fibrant replacement of the sectionswise Postnikov tower of $\mathcal{F}$. 
Apply the functor $f_*$ levelwise to \eqref{postnikov tower 1}. This yields a tower of fibrations of sheaves of spectra on $(\mathcal{C}_1,\tau_1)$. Take the levelwise stalk at $x$ to get a tower $(f_*\Post(\mathcal{F}))_x^{\bullet}$ of fibrations of spectra. 
It is standard that applying $\pi_*$ to a tower of fibrations yields an exact couple and consequently a spectral sequence. 

We claim that the homotopy limit of the tower $(f_*\Post(\mathcal{F}))_x^{\bullet}$ is $(f_*\mathcal{F})_{x}$. Clearly the claim is true if the Postnikov system $\Post(\mathcal{F})$ is finite, since then the homotopy limit is eventually constant in both the domain and the codomain of the canonical comparison map
\begin{align} 
\label{comparison map 1} (f_*  \holim \Post(\mathcal{F}) )_x &\rightarrow \holim (f_*\Post(\mathcal{F}) )_x.\end{align}
If the Postnikov system is not finite, consider the effect of the map \eqref{comparison map 1} in $\pi_i$, for any given integer $i$. The $m$th filtration quotient $\widetilde{\Sigma^m H\pi_m\mathcal{F}}$ in the Postnikov tower has homotopy groups concentrated in degrees $m-M, m-M+1, \dots ,m-1, m$, since $\pi_j(\widetilde{\Sigma^m H\pi_m\mathcal{F}}) \cong \left(R^{m-j}f_*)(\widetilde{\pi_m\mathcal{F}})\right)_x$.

Hence, for any given value of $i$, the effect of the comparison map \eqref{comparison map 1} in $\pi_i$ depends only on the $(i+M)$th Postnikov truncation of $\mathcal{F}$, which is a {\em finite} Postnikov system. Hence \eqref{comparison map 1} induces an isomorphism in $\pi_i$. This is true for each integer $i$, so \eqref{comparison map 1} is a weak equivalence. Hence the spectral sequence obtained by applying $\pi_*$ to $(f_*\Post(\mathcal{F}))_x^{\bullet}$ converges to $\pi_*\left((f_*\mathcal{F})_{x}\right)$, as desired.

The $E_1$-term of the spectral sequence is the direct sum, over all integers $n$, of $\pi_*$ of the stalk at $x$ of $f_*$ of the fibrant replacement $\widetilde{\Sigma^n H\pi_n\mathcal{F}}$ of $\Sigma^n H\pi_n\mathcal{F}$. The fibrant replacement of $\Sigma^n H\pi_n\mathcal{F}$ in $\Sp\Sh(\mathcal{C}_2,\tau_2)$ is simply the $n$-fold suspension of the total derived functor of the presheaf of graded abelian groups $\pi_n\mathcal{F}$ on $(\mathcal{C}_2,\tau_2)$. Consequently its stalk at $x$ has homotopy groups simply given by $(R^*f_*\mathcal{F})_x$, yielding our description of the $E_1$-term of the spectral sequence.

The first and second vanishing assumptions ensure that the spectral sequence is concentrated between two horizontal lines crossed by all sufficiently long differentials, yielding strong convergence.
\end{proof}

We refer to \eqref{stalk ss 1} as the {\em stalk spectral sequence.}

\subsection{Nearby cycles and vanishing cycles, for presheaves of spectra.}
\label{Nearby cycles and vanishing cycles...}

We recall \cite{MR0354656} the classical definition of nearby cycles and vanishing cycles for schemes over the spectrum of a Henselian discrete valuation ring $S$. The most important case for us is the case where $S$ is the Witt ring $W(k)$, with $k$ a finite field. To be as explicit as possible: $W(\mathbb{F}_{p^j})$ is isomorphic to
$\hat{\mathbb{Z}}_p[\zeta_{p^j-1}]$, where $\zeta_{p^j-1}$ is a primitive $(p^j-1)$st root of unity. Write $s$ for the special (i.e., closed) point of $S$, write $\eta$ for the generic point of $S$, and write $\overline{\eta}$ for the separable closure of $\eta$.

Now suppose we are given a scheme $X$ over $S$. Write $X_s$ for the fiber of $X$ over $s$, and write $X_{\overline{\eta}}$ for the fiber of $X$ over $\overline{\eta}$. We will furthermore write $i: X_s \hookrightarrow X$ and $j: X_{\overline{\eta}}\hookrightarrow X$ for the respective inclusions. Given a bounded-below cochain complex $C^{\bullet}$ of abelian sheaves on the small \'{e}tale site $X_{\et}$, the {\em nearby cycles of $C^{\bullet}$}, written $R\Psi C^{\bullet}$,  is the object of the derived category $D^+((X_s)_{\et})$ defined as
\begin{align}
\label{def of nearby cycles 1} R\Psi C^{\bullet} & := i^*Rj_*(j^*C^{\bullet}).
\end{align}
We emphasize that we are working only with sheaves of abelian groups here, not modules over the structure ring sheaves! That is why there is no need to derive the functors $j^*$ or $i^*$ in \eqref{def of nearby cycles 1}: unlike categories of modules over structure ring sheaves, the pullback maps on categories of abelian sheaves are always exact. 

We have a natural map $i^*C^{\bullet}\rightarrow i^*Rj_*(j^*C^{\bullet})$ in $D^+((X_s)_{\et})$, arising from applying $i^*$ to the unit map of the adjunction $j^*\dashv Rj_*$. The {\em vanishing cycles of $C^{\bullet}$}, written $\Phi(C^{\bullet})$, is the cofiber of that natural map. Consequently $C^{\bullet}$ and its nearby cycles and vanishing cycles fit into a single cofiber sequence in $D^+((X_s)_{\et})$,
\[ i^*C^{\bullet} \rightarrow R\Psi C^{\bullet}\rightarrow \Phi C^{\bullet}. \]

As a special case, when $\mathcal{F}$ is an abelian sheaf on the small \'{e}tale site $X_{\et}$, we write $\mathcal{F}[0]$ for the object of $D^+(X_{\et})$ with $\mathcal{F}[0]$ in degree zero and trivial in nonzero degrees. We then write $R^s\Phi \mathcal{F}$ for the $s$th cohomology group of the vanishing cycle complex $\Phi(\mathcal{F}[0])$:
\begin{align}\label{rspsi def} R^s\Phi \mathcal{F} &:= H^s(\Phi(\mathcal{F}[0])).\end{align}

The above definitions also extend in a straightforward way to the situation where $C^{\bullet}$ is replaced by a presheaf of spectra on $X_{\et}$ (or indeed, a presheaf on $X_{\et}$ taking values in any reasonably well-behaved stable model category or stable $\infty$-category). 
Using the general constructions of \cref{The stalk spectral...}, we have derived pullback functors 
\begin{align*}
 Li^*: \Sp\Sh(X_{\et}) &\rightarrow \Sp\Sh((X_s)_{\et}) \mbox{\ \ \ and} \\
 Lj^*: \Sp\Sh(X_{\et}) &\rightarrow \Sp\Sh((X_{\overline{\eta}})_{\et}) ,\end{align*}
a derived pushforward functor
\begin{align*}
 Rj_*: \Sp\Sh((X_{\overline{\eta}})_{\et}) &\rightarrow \Sp\Sh(X_{\et}),
\end{align*}
and an adjunction $Lj^*\dashv Rj_*$. This yields a unit natural transformation $\id \rightarrow Rj_*Lj^*$. For our purposes, the essential properties of these derived pushforward and pullback functors are as follows:
\begin{itemize}
\item Given a geometric point $x$ of $X$, the homotopy groups of the stalk $(Rj_*\mathcal{F})_x$ are computable using the spectral sequence of Proposition \ref{stalk ss prop}, provided that the hypotheses of Proposition \ref{stalk ss prop} are satisfied.
\item By a similar argument and using the acyclicity of $j^*$, we have 
\begin{align}
\label{iso 3241} \pi_*\left( (Lj^*\mathcal{F})_x\right) &\cong \left(j^*\widetilde{\pi_*\mathcal{F}}\right)_x 
\end{align} 
for every geometric point $x$ of $X$. A similar isomorphism holds for $Li^*\mathcal{F}$.
\end{itemize}

\begin{definition}
Let $S,X,s,\eta,i,j$ be as defined in the previous paragraphs. Suppose that $\mathcal{F}$ is a presheaf of spectra on $X_{\et}$. 
The {\em nearby cycles of $\mathcal{F}$} is the presheaf $\Psi \mathcal{F}$ of spectra on $(X_s)_{\et}$:
\begin{align}
\label{def of nearby cycles 2} \Psi \mathcal{F} &:= Li^*(Rj_*(Lj^*\mathcal{F})).
\end{align}
The {\em vanishing cycles of $\mathcal{F}$}, written $\Phi\mathcal{F}$, is the presheaf of spectra on $(X_s)_{\et}$ given by the cofiber of the natural map $Li^*\mathcal{F} \rightarrow Li^*(Rj_*(Lj^*\mathcal{F}))$. 
\end{definition}
Consequently we have a canonical cofiber sequence in $\Sp\Sh((X_s)_{\et})$,
\[ Li^*\mathcal{F}\rightarrow \Psi\mathcal{F}\rightarrow \Phi\mathcal{F}.\]

However, the above is not enough: for our purposes, we need vanishing cycles for presheaves of spectra not when $X$ is a scheme over $S$, but instead when $X$ is a {\em formal} scheme over $S$. Recall that the generic fibre of a formal scheme is not simply the fiber over the generic point of $S$ (this would be empty); instead it is a rigid analytic space (\cite{MR0470254}, \cite{MR1202394}, \cite{MR1225983}). In the article \cite{MR1262943}, Berkovich generalized the construction of vanishing cycles to the case where $X$ is a formal scheme with suitable finiteness properties. 

Berkovich's construction of vanishing cycles for formal schemes is 
by building morphisms between the \'{e}tale sites of $X_s$ and of $X_{\overline{\eta}}$, using Berkovich's ``quasi-\'{e}tale site'' as an intermediary: see the beginning of section 4 of \cite{MR1262943} for this. Hence the generalization of Berkovich's vanishing cycles from abelian sheaves to spectral sheaves can be carried out formally, using the general constructions of \cref{The stalk spectral...}. 
That is, even when $X$ is a formal scheme---most importantly for our purposes, $\Def(\mathbb{G})^{\degen}_{\lvl p^m}$, as defined in \cref{The degenerating Lubin-Tate tower.}---for each presheaf of spectra $\mathcal{F}$ on $X_{\et}$ there exists a ``vanishing cycles'' presheaf of spectra $\Phi\mathcal{F}$ on $(X_s)_{\et}$. 
The essential property one would like to establish for $\Phi\mathcal{F}$ is the existence of a strongly convergent spectral sequence
\begin{align}\label{vanishing cycles stalk ss 1}
 E_1^{s,t} \cong R^s\Phi\left( \widetilde{\pi_t\mathcal{F}}\right)_x
 &\Rightarrow \pi_{t-s}\Gamma\left(\widetilde{\Phi(\mathcal{F})}_x\right),\\
\nonumber d_r: E_r^{s,t} &\rightarrow E_r^{s+r,t+r-1}
\end{align}
where:
\begin{itemize}
\item $x$ is a geometric point of $X$, 
\item $\widetilde{\pi_t\mathcal{F}}$ is the sheafification of the presheaf of abelian groups $\pi_t\mathcal{F}$ on $X_{\et}$, 
\item $\widetilde{\Phi(\mathcal{F})}$ is the sheafification (i.e., Jardine fibrant replacement) of the presheaf of spectra $\Phi(\mathcal{F})$ on $(X_s)_{\et}$,
\item and the notation $R^s\Phi$ is as defined above, in \eqref{rspsi def}.
\end{itemize}
This spectral sequence, with all the above properties, indeed exists, under the assumptions that:
\begin{enumerate}
\item $X$ is a degenerating Lubin-Tate space,
\item there is a uniform lower bound on the homotopy groups of $\mathcal{F}$ (cf. the same assumption in Proposition \ref{stalk ss prop}), and
\item for each integer $i$, the sheaf of abelian groups $\widetilde{\pi_i\mathcal{F}}$ on $X_{\et}$ is constructible, of torsion orders prime to $p$.
\end{enumerate}
The construction of spectral sequence \eqref{vanishing cycles stalk ss 1} is basically the same as that of the stalk spectral sequence \eqref{stalk ss 1}: take the fibrant sectionswise Postnikov tower $\Post(\mathcal{F})$ of $\mathcal{F}$, 
apply $\Phi$ levelwise, take the stalk, then take homotopy groups to get an exact couple and consequently a spectral sequence. 
However, there is something nontrivial in establishing that the spectral sequence's abutment is as claimed.
To guarantee that the spectral sequence converges to the homotopy groups of the stalk of $\Phi \mathcal{F}$, one needs to know that the canonical map
\begin{align} \label{comparison map 2} (\Phi  \holim \Post(\mathcal{F}) )_x &\rightarrow \holim\left( (\Phi\Post(\mathcal{F}) )_x\right)\end{align}
is a weak equivalence. To establish this, one can use the same argument as given in the proof of Proposition \ref{stalk ss prop} to identify the abutment of the stalk spectral sequence \eqref{stalk ss 1}.  To make that argument, one needs to know that there is an integer $N$ such that the homotopy groups of $(\Phi \Sigma^m\widetilde{H\pi_m\mathcal{F}})_x$ are concentrated in degrees between $m-N$ and $m$. 

This is where we use the {\em quasi-affineness} of the generic fibre of degenerating Lubin-Tate space---which we prove in Proposition \ref{generic fibre decomp}---alongside some results of Berkovich. 
Write $\mathcal{D}$ for the generic fibre of $\left(\Def(\mathbb{G})^{\degen}_{\lvl \varpi^m}\right)_{\et}$.
Given a constructible abelian sheaf $\mathcal{A}$ on $\left(\Def(\mathbb{G})^{\degen}_{\lvl \varpi^m}\right)_{\et}$ of torsion orders prime to the characteristic of $A/\underline{m}$, the stalk of $R^s\Phi\left( \mathcal{A}\right)$ at the single point in the special fibre of $\Def(\mathbb{G})^{\degen}_{\lvl \varpi^m}$ is canonically isomorphic to the compactly supported cohomology group $H^s_c(\mathcal{D};\mathcal{A})$; see Lemma 2.5.1 of \cite{MR2383890} for this argument. Since $\mathcal{D}$ is paracompact, quasi-affine of dimension $n-1$, and regular (hence smooth, by perfectness of $k$) by Corollary 6.2 of \cite{MR1395723} and by Proposition \ref{generic fibre decomp}, $H^s_c(\mathcal{D};\mathcal{A})$ vanishes for $s<n-1$. The Poincar\'{e} duality of Theorem 7.3.1 of \cite{MR1259429} furthermore yields that $H^s_c(\mathcal{D};\mathcal{A})$ vanishes for $s>2n-2$. In the case $\mathcal{A} = \Sigma^m \widetilde{\pi_m\mathcal{F}}$, this vanishing result yields that the desired bound $N$ in the previous paragraph can be taken to be $2n-2$. 

The same cohomological dimension bound also yields that the spectral sequence \eqref{vanishing cycles stalk ss 1} is concentrated between two horizontal lines which are crossed by any sufficiently long differential. By a standard argument (as in \cite{MR1718076}), this yields strong convergence of the spectral sequence.

We will call \eqref{vanishing cycles stalk ss 1} the {\em vanishing cycles stalk spectral sequence.}

\section{The $\ell$-adic Jacquet-Langlands cohomology theory $JL_{\ell}(\mathbb{G})^*$.}
\label{l-adic JL cohomology theory}

As mentioned in the introduction, we recommend that readers browse the appendix (joint with M. Strauch) before beginning to read this section. The appendix does not logically depend on anything earlier in this paper, but beginning in this section, we will use some definitions and some theorems from the appendix.

From Theorem \ref{realizability thm 1}, we have a topological realization of the degenerating Lubin-Tate tower, i.e., for each formal $\hat{\mathbb{Z}}_p$-module $\mathbb{G}$ of positive finite $p$-height $n$, we have a presheaf $\mathcal{O}^{\topsym}_{\lvl p^m}$ of complex-oriented $E_{\infty}$-ring spectra on the small \'{e}tale site of $\Def(\mathbb{G})^{\degen}_{\lvl p^m}$ such that the formal group law of $\pi_0(\mathcal{O}^{\topsym}_{\lvl p^m}(U))$ arising from the complex orientation agrees with the universal formal group law on $\Gamma(U)$, for every \'{e}tale open $U$. From Theorem \ref{coh splitting thm}, we know that $H^i_c(\Def(\mathbb{G})^{}_{\lvl p^m};\overline{\mathbb{Q}}_{\ell})$  is a $GL_n(\mathbb{Z}/p^m\mathbb{Z})\times \Aut(\mathbb{G})$-equivariant summand of $H^i_c(\Def(\mathbb{G})^{\degen}_{\lvl p^m};\overline{\mathbb{Q}}_{\ell})$. From Theorem \ref{strauch thm}, we know that the $\ell$-adic Jacquet-Langlands correspondence is realized in $\colim_m H^{n-1}_c(\Def(\mathbb{G})_{\lvl p^m};\overline{\mathbb{Q}}_{\ell})$, and by Theorem \ref{coh splitting thm}, we know that the $\ell$-adic Jacquet-Langlands correspondence is also realized---although with some redundant summands!---in $\colim_m H^{n-1}_c(\Def(\mathbb{G})^{\degen}_{\lvl p^m};\overline{\mathbb{Q}}_{\ell})$. In this section we bring these facts together.

For each \'{e}tale open $U$ of $\Def(\mathbb{G})^{\degen}_{\lvl p^m}$, the homotopy groups $\pi_*(\mathcal{O}^{\topsym}_{\lvl p^m}(U))$ form a $\hat{\mathbb{Z}}_p$-module. 
In order to relate $\mathcal{O}^{\topsym}_{\lvl p^m}$ to the $\ell$-adic cohomology of $\Def(\mathbb{G})^{\degen}_{\lvl p^m}$, we must find some well-behaved operation which would take as input the presheaf $\mathcal{O}^{\topsym}_{\lvl p^m}$ of {\em $p$-adically complete} spectra, and produce as output a suitable presheaf of {\em $\ell$-adically complete} spectra.

This is a bit unusual: there are not many well-studied operations on spectra which have such an effect. However, the algebraic $K$-theory spectrum $\mathcal{K}(R)$ of an $E_{\infty}$-ring spectrum is another $E_{\infty}$-ring spectrum, and even if $\pi_*(R)$ is $p$-adically complete, $\pi_*(\mathcal{K}(R))$ will almost never be $p$-adically complete. Instead, it is typical to have $\pi_0(\mathcal{K}(R))\cong \mathbb{Z}$. We propose to simply apply the algebraic $K$-theory functor $\mathcal{K}$ sections-wise to $\mathcal{O}^{\topsym}_{\lvl p^m}$, yielding the presheaf\footnote{Sometimes the term {\em spectral Deligne-Mumford stack} is used to refer to a presheaf of connective $E_{\infty}$-ring spectra $\mathcal{F}$ on the small \'{e}tale site $\mathcal{X}_{\et}$, where $\mathcal{X}$ is a Deligne-Mumford stack in the standard sense (e.g. \cite{MR1771927}), and where the presheaf of commutative rings $\pi_0\circ \mathcal{F}$ on $\mathcal{X}_{\et}$ coincides with the structure sheaf of $\mathcal{X}_{et}$. See \cite{MR2597740} for discussion of this perspective. Our presheaf $\mathcal{O}^{\topsym}_{\lvl p^m}$, constructed in Theorem \ref{realizability thm 1}, satisfies all these conditions except the condition that each of the ring spectra $\mathcal{F}(U)$ is connective. We might say that $\mathcal{O}^{\topsym}_{\lvl p^m}$ is a ``nonconnective spectral Deligne-Mumford stack.'' We point this out only to be clear that $\mathcal{K}\circ \mathcal{O}^{\topsym}_{\lvl p^m}$ is {\em not} a spectral Deligne-Mumford stack, not even in the nonconnective sense, since $\pi_0\left(\left(\mathcal{K}\circ \mathcal{O}^{\topsym}_{\lvl p^m}\right)(U)\right) \cong K_0(\mathcal{O}^{\topsym}_{\lvl p^m}(U))$ is not a $\Gamma(U)$-module in any natural way.} of $E_{\infty}$-ring spectra $\mathcal{K}\circ \mathcal{O}^{\topsym}_{\lvl p^m}$ on the small \'{e}tale site of $\Def(\mathbb{G})^{\degen}_{\lvl p^m}$. After $\ell$-adic completion, we then have a presheaf of spectra on $\Def(\mathbb{G})^{\degen}_{\lvl p^m}$ whose homotopy groups are $\hat{\mathbb{Z}}_{\ell}$-modules.

Using algebraic $K$-theory in this way seems ridiculous. Converting presheaves of $p$-complete spectra into presheaves of $\ell$-complete spectra is not anywhere among the usual applications for algebraic $K$-theory! At a glance, it appears that this approach ought not to lead anywhere: owing to the notorious difficulty in calculating algebraic $K$-groups, one would expect it might be impossible to calculate or prove anything about vanishing cycles of the presheaf of spectra $(\mathcal{K}\circ \mathcal{O}^{\topsym}_{\lvl p^m})^{\wedge}_{\ell}$. 

The surprise is that this ridiculous idea actually works quite well. In Theorem \ref{main thm}, below, we will show that the supercuspidal part of the Jacquet-Langlands correspondence for $GL_n(\mathbb{Q}_p)$ is realized in the rational homotopy groups of the stalk of the vanishing cycles of the presheaf of spectra $(\mathcal{K}\circ \mathcal{O}^{\topsym}_{\lvl p^m})^{\wedge}_{\ell}$. We will even be able to make very explicit calculations of the resulting representation of $GL_n(\hat{\mathbb{Z}}_p)\times \Aut(\mathbb{G})$ on on those rational homotopy groups in the homotopy colimit as $m\rightarrow \infty$, leading to the results on automorphic $L$-factors in Theorem \ref{main automorphic thm}. 
%
This is possible because we can give a very explicit description of the homotopy type of the $\ell$-adic completion of $\mathcal{K}\circ \mathcal{O}^{\topsym}_{\lvl p^m}$. Here is our analysis of that homotopy type:
\begin{theorem}\label{splitting of k-thy spectrum}
Fix a prime $\ell\neq p$ and a positive integer $j$. 
Let $U$ be a connected \'{e}tale open of $\Def(\mathbb{G})^{\degen}_{\lvl p^m}$. Write $k(U)$ for the residue field of the local ring $\Gamma(U)$. 
Then the spectrum $\left( S/\ell^j\wedge \left(\mathcal{K}\circ\mathcal{O}^{\topsym}_{\lvl p^m}\right)\right)(U)$ is weakly equivalent to \[\left( S/\ell^j\wedge \mathcal{K}(k(U))\right) \vee \Sigma \left( S/\ell^j\wedge \mathcal{K}(k(U))\right).\] This weak equivalence is natural in the variable $U$, i.e., the presheaf of spectra $S/\ell^j \wedge \left(\mathcal{K}\circ\mathcal{O}^{\topsym}_{\lvl p^m}\right)$ is weakly equivalent to the presheaf of spectra
\begin{align}
\label{presheaf 0934} U &\mapsto S/\ell^j\wedge \left( \mathcal{K}(k(U))\vee \Sigma \mathcal{K}(k(U))\right)\end{align} on the small \'{e}tale site of $\Def(\mathbb{G})^{\degen}_{\lvl p^m}$. This weak equivalence is also natural in the variable $j$, so that 
the presheaf of spectra $\left(\mathcal{K}\circ\mathcal{O}^{\topsym}_{\lvl p^m}\right)^{\wedge}_{\ell}$ is weakly equivalent to the presheaf of spectra
\begin{align}
\label{presheaf 0935} U &\mapsto \mathcal{K}(k(U))^{\wedge}_{\ell}\vee \Sigma \mathcal{K}(k(U))^{\wedge}_{\ell}.\end{align}
\end{theorem}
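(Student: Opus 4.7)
The plan is to compute $S/\ell^j \wedge \mathcal{K}(\mathcal{O}^{\topsym}_{\lvl p^m}(U))$ by using the Blumberg--Mandell version of the Waldhausen localization theorem to strip off the periodicity of $\mathcal{O}^{\topsym}_{\lvl p^m}(U)$, then applying rigidity theorems to reduce down to the residue field $k(U)$. Write $R := \mathcal{O}^{\topsym}_{\lvl p^m}(U)$ and let $e := \tau_{\geq 0} R$ be its connective cover. By Theorem~\ref{realizability thm 1}, $R$ is a complex-oriented, weakly even-periodic $E_\infty$-ring with $\pi_0 R = \Gamma(U)$, so there is a Bott-like element $u \in \pi_2 e$ with $\pi_* e \cong \Gamma(U)[u]$ and $R \simeq e[u^{-1}]$.

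The first step is to feed the inversion $e \to e[u^{-1}] = R$ into the Blumberg--Mandell/Waldhausen localization sequence, giving a cofiber sequence of spectra
\begin{align*}
 \mathcal{K}(e/u) \longrightarrow \mathcal{K}(e) \longrightarrow \mathcal{K}(R).
\end{align*}
The $e$-module $e/u$ has homotopy concentrated in degree zero with $\pi_0(e/u) = \Gamma(U)$, so $\mathcal{K}(e/u)$ is the ordinary algebraic $K$-theory of the complete local ring $\Gamma(U)$. Smashing with $S/\ell^j$ and invoking Gabber rigidity for the henselian pair $(\Gamma(U), \mathfrak{m}_{\Gamma(U)})$---valid since $\ell \neq p$ is a unit in $\Gamma(U)$---identifies $S/\ell^j \wedge \mathcal{K}(e/u)$ with $S/\ell^j \wedge \mathcal{K}(k(U))$. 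For the middle term, I would identify $S/\ell^j \wedge \mathcal{K}(e)$ with $S/\ell^j \wedge \mathcal{K}(k(U))$ via an analogue of Gabber rigidity for connective $E_\infty$-ring spectra: descend along the Postnikov tower $e \to H\pi_0 e = H\Gamma(U)$, noting that each successive layer is a shifted $H\Gamma(U)$-module on which cyclotomic-trace and Dundas--Goodwillie--McCarthy-type methods supply the required layerwise equivalences in mod $\ell^j$ $K$-theory, then apply Gabber on the bottom.

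After these two identifications the cofiber sequence becomes
\begin{align*}
 S/\ell^j \wedge \mathcal{K}(k(U)) \xrightarrow{f} S/\ell^j \wedge \mathcal{K}(k(U)) \longrightarrow S/\ell^j \wedge \mathcal{K}(R),
\end{align*}
so the theorem follows once the connecting map $f$ is shown null-homotopic. The map $f$ is the transfer arising from the presentation $\Sigma^2 e \xrightarrow{\cdot u} e \to e/u$ of $e/u$ as a two-step perfect $e$-complex, and on $K_0$ it sends $1 \mapsto [e/u] = [e] - [e] = 0$. To promote this to a spectrum-level nullhomotopy (mod $\ell^j$), I would either (i) construct an explicit section of the third map using the unit of the $E_\infty$-ring structure on $\mathcal{K}(R)$ together with the rigidity identification, or (ii) exhibit the splitting directly at the level of Waldhausen models, using that every perfect $R$-module is classified by a $\mathbb{Z}/2$-graded perfect $\Gamma(U)$-module. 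Either route produces the desired wedge decomposition.

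Naturality in $U$ is automatic from the functoriality of all constructions on $E_\infty$-ring maps, and naturality in $j$ follows from compatibility of the Moore cofiber sequences $S/\ell^{j+1} \to S/\ell^j$; passing to $\holim_j$ yields the $\ell$-adic statement \eqref{presheaf 0935}. The hard part will be the null-homotopy of $f$, since $K_0$-triviality does not formally imply spectrum-level nullity, so the real work goes into producing an explicit splitting, either via the multiplicative structure on $\mathcal{K}(R)$ or a direct module-theoretic argument. A secondary, likely easier obstacle is the $E_\infty$-version of Gabber rigidity needed to identify $S/\ell^j \wedge \mathcal{K}(e)$ with $S/\ell^j \wedge \mathcal{K}(k(U))$; for this I would lean on existing truncating-invariants machinery for connective ring spectra.
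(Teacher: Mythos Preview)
Your overall architecture matches the paper's: localization sequence for $e \to e[u^{-1}] = R$, then Dundas--Goodwillie--McCarthy plus Gabber to collapse both $\mathcal{K}(e)$ and $\mathcal{K}(\pi_0 e)$ to $\mathcal{K}(k(U))$ mod $\ell^j$, then argue the boundary map is null. The DGM step is exactly what the paper does (via the pullback square with $TC$ and the $\ell$-adic vanishing of $TC$ of $p$-complete ring spectra), so your ``analogue of Gabber rigidity for connective $E_\infty$-rings'' is not a separate obstacle---it \emph{is} DGM applied once, not layer by layer.

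The genuine gap is the nullhomotopy of $f$. You correctly flag this as the hard part and correctly note that $K_0$-triviality alone does not suffice, but neither of your proposed routes is what the paper does, and neither is obviously workable as stated. The paper's argument runs as follows: on a sufficiently small \'etale open, $k(U)$ is separably closed, so Suslin rigidity gives $\mathcal{K}(k(U))^\wedge_\ell \simeq \hat{ku}_\ell$. One then invokes Johnson's theorem that a self-map of $\hat{ku}_\ell$ is determined by its effect on $\pi_0(\hat{KU}_\ell)$---every such map is a stable Adams operation, parametrized by $\hat{\mathbb{Z}}_\ell^\times$. Since $f$ is zero on $K_0$, it is therefore null on stalks. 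This is the missing ingredient in your proposal: Suslin rigidity to reach $ku$, followed by the classification of self-maps of $\hat{ku}_\ell$.

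This also affects your naturality claim. The nullhomotopy is only produced stalkwise (where $k(U)$ is separably closed), and the paper then uses that a map of presheaves of spectra which is null on all stalks becomes null after fibrant replacement in the Jardine model structure. So ``naturality in $U$ is automatic from functoriality'' is not quite right: functoriality gives you a map of presheaves, but the splitting requires the sheafification step to globalize the stalkwise nullhomotopy. Your route (ii)---classifying perfect $R$-modules by $\mathbb{Z}/2$-graded perfect $\Gamma(U)$-modules---is an appealing idea, but would need real work to make precise at the level of Waldhausen categories, and the paper does not go that way.
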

\begin{proof}
By construction, we have the weak equivalence
\begin{align*}
 \Gamma(\mathcal{O}^{\topsym}_{\lvl p^m}) 
  &\cong F\left(\Sigma^{\infty}_+BC_{p^m}^n,E(\mathbb{G})\right) 
\end{align*}
and the isomorphisms
\begin{align*}
 \pi_0\Gamma(\mathcal{O}^{\topsym}_{\lvl p^m}) 
  &\cong \Gamma\left(\Def(\mathbb{G})^{\degen}_{\lvl p^m}\right),\\
 \pi_*\Gamma(\mathcal{O}^{\topsym}_{\lvl p^m}) 
  &\cong \pi_0\Gamma(\mathcal{O}^{\topsym}_{\lvl p^m})[u^{\pm 1}], \mbox{\ \ \ and}\\
 \pi_*\left( \mathcal{O}^{\topsym}_{\lvl p^m}(U)\right)
  &\cong \pi_*\Gamma(\mathcal{O}^{\topsym}_{\lvl p^m}) \otimes_{\pi_0\Gamma(\mathcal{O}^{\topsym}_{\lvl p^m})} \Gamma(U),
\end{align*}
with $u\in \pi_2\Gamma(\mathcal{O}^{\topsym}_{\lvl p^m})$, 
where $E(\mathbb{G})$ is the Morava $E$-theory spectrum of $\mathbb{G}$. 

Let $V^U_m$ denote the connective cover of the $E_{\infty}$-ring spectrum $\mathcal{O}^{\topsym}_{\lvl p^m}(U)$. Then the element $u\in \pi_2(V^U_m)$ has the property that $V^U_m[u^{-1}]\simeq \mathcal{O}^{\topsym}_{\lvl p^m}(U)$. In \cite{MR2413133}\footnote{The sequence \eqref{loc seq 1} is not spelled out explicitly in \cite{MR2413133}, but in \cite{MR4096617} it is pointed out (see page vii) that \eqref{loc seq 1} is produced by the methods of \cite{MR2413133}.}, Blumberg--Mandell use Waldhausen's fibration theorem \cite{MR802796} and a devissage argument to produce a fiber sequence
\begin{equation}\label{loc seq 1} \mathcal{K}\left(\pi_0(E(\mathbb{G}))\right) \rightarrow \mathcal{K}(BP_n)\rightarrow\mathcal{K}(E(\mathbb{G})),\end{equation}
where $BP_n$ is the connective cover of $E(\mathbb{G})$.
The same argument applies to the $BP_n$-algebra $V_m^U$ and the $E(\mathbb{G})$-algebra $\mathcal{O}^{\topsym}_{\lvl p^m}(U)$, yielding a fiber sequence
\begin{equation}\label{loc seq 2} \mathcal{K}\left(\pi_0(\mathcal{O}^{\topsym}_{\lvl p^m}(U)\right)\rightarrow \mathcal{K}(V_m^U) \rightarrow\mathcal{K}\left(\mathcal{O}^{\topsym}_{\lvl p^m}(U)\right).\end{equation}

Since $V^U_m$ is connective, the work of Dundas--Goodwillie--McCarthy \cite{MR3013261} yields a homotopy pullback square
\begin{equation}\label{htpy pullback 1}\xymatrix{
 \mathcal{K}\left(V^U_m\right)\wedge S/\ell^j \ar[r]\ar[d] &
  TC\left(V^U_m\right) \wedge S/\ell^j \ar[d] \\
 \mathcal{K}\left(\pi_0V^U_m\right)\wedge S/\ell^j \ar[r] &
  TC\left(\pi_0V^U_m\right) \wedge S/\ell^j.
}\end{equation}
Since $\ell\neq p$, 
the topological cyclic homology spectra in \eqref{htpy pullback 1} 
become contractible after reduction modulo $\ell^j$; see \cite{MR1341845} or \cite{MR3904731} for these properties of topological cyclic homology. Hence the natural map 
\[\mathcal{K}\left(V^U_m\right) \rightarrow \mathcal{K}\left(\pi_0V^U_m\right)\cong \mathcal{K}\left(\pi_0\mathcal{O}^{\topsym}_{\lvl p^m}(U)\right)\]
is a mod $\ell^j$ equivalence. 

We have $\pi_0\mathcal{O}^{\topsym}_{\lvl p^m}(U)\cong \Gamma(U)$ by Theorem \ref{realizability thm 1}, a complete local $W(k)$-algebra with residue field $k(U)$. Hence, by Gabber rigidity (\cite{MR1156502}, or see Theorem IV.2.10 of \cite{MR3076731} for a textbook reference), reduction to the residue field induces a mod $\ell^j$ weak equivalence 
\begin{align*}
 \mathcal{K}(\pi_0\mathcal{O}^{\topsym}_{\lvl p^m}(U)) &\stackrel{}{\longrightarrow} \mathcal{K}(k(U)).\end{align*}
The upshot is that, after smashing with $S/\ell^j$, the localization sequence \eqref{loc seq 2} reduces to a fiber sequence
\begin{equation}\label{loc seq 3} S/\ell^j\wedge \mathcal{K}(k(U)) \rightarrow  S/\ell^j\wedge \mathcal{K}(k(U)) \rightarrow S/\ell^j\wedge \left(\mathcal{K}\left(\mathcal{O}^{\topsym}_{\lvl p^m}(U)\right) \right).\end{equation}
In a sufficiently small \'{e}tale open $U$, $k(U)$ will be separably closed, so that $S/\ell^j\wedge \mathcal{K}(k(U))\simeq S/\ell^j\wedge ku$ by Suslin rigidity \cite{MR934225}. A map $\hat{ku}_{\ell}\rightarrow \hat{ku}_{\ell}$ is determined by its effect on $\pi_0(\hat{KU}_{\ell})$, by the main result of \cite{MR0844906}, i.e., it is determined by a stable Adams operation in $\hat{\mathbb{Z}}^{\times}_{\ell}$.
Hence, for $k(U)$ separably closed, the map $\mathcal{K}(k(U))^{\wedge}_{\ell} \rightarrow  \mathcal{K}(k(U))^{\wedge}_{\ell}$ in \eqref{loc seq 3} is determined by its effect on $\pi_0$, i.e., by the effect of the map 
\begin{align}
\label{map 304895}  K_0\left(\pi_0(\mathcal{O}^{\topsym}_{\lvl p^m}(U));\hat{\mathbb{Z}}_{\ell}\right)
 & \rightarrow   K_0\left(V_m^U;\hat{\mathbb{Z}}_{\ell}\right).
\end{align}
The map \eqref{map 304895} is zero, by inspection of the long exact sequence in homotopy induced by \eqref{loc seq 2}, and the observation that the map $K_0(V_m^U)\rightarrow K_0(\mathcal{O}^{\topsym}_{\lvl p^m}(U))$ sends $[V_m^U]$ to $[\mathcal{O}^{\topsym}_{\lvl p^m}(U)]$, and those elements generate the domain and codomain, respectively, of the map
\[ \mathbb{Q}\otimes_{\mathbb{Z}}K_0(V_m^U;\hat{\mathbb{Z}}_{\ell})\rightarrow \mathbb{Q}\otimes_{\mathbb{Z}}K_0(\mathcal{O}^{\topsym}_{\lvl p^m}(U);\hat{\mathbb{Z}}_{\ell}).\]

Finally, since the map \eqref{map 304895} is zero, the map of presheaves of spectra $\mathcal{K}(k(-))^{\wedge}_{\ell} \rightarrow  \mathcal{K}(k(-))^{\wedge}_{\ell}$ in \eqref{loc seq 3} is nulhomotopic on stalks. Hence, after sheafification (i.e., fibrant replacement in the Jardine model structure), it is nulhomotopic. Hence the fiber sequence \eqref{loc seq 3} splits $(\mathcal{K}\circ \mathcal{O}^{\topsym}_{\lvl p^m})^{\wedge}_{\ell}$ as the wedge sum \eqref{presheaf 0935}. Since the mod $\ell^j$ reduction depends only on the mod $\ell^j$ reduction of the $\ell$-adic completion, the wedge splitting \eqref{presheaf 0935} implies the wedge splitting \eqref{presheaf 0934}.
\end{proof}

\begin{remark}\label{ht 1 aut(G) triviality}
In \cref{l-adic TJL duality}, we will need to understand the action of $\Aut(\mathbb{G})$ on $\mathcal{K}(E(\mathbb{G}))^{\wedge}_{\ell}$, where $E(\mathbb{G})$ the Morava $E$-theory spectrum of a $p$-height $1$ formal group $\mathbb{G}$. 
It follows from the proof of Theorem \ref{splitting of k-thy spectrum} that this action is trivial. However, for $\mathbb{G}$ of $p$-height $>1$, the action is no longer trivial.

This deserves some explanation. Write $k$ for the field over which $\mathbb{G}$ is defined. As in the proof of Theorem \ref{splitting of k-thy spectrum}, we will write $BP_n$ for the connective cover of $E(\mathbb{G})$. Now use the localization sequence \eqref{loc seq 3}: of the two wedge summands $\mathcal{K}(k)^{\wedge}_{\ell}$ and $\Sigma\mathcal{K}(k)^{\wedge}_{\ell}$ of $\mathcal{K}(E(\mathbb{G}))^{\wedge}_{\ell}$, the action of $\Aut(\mathbb{G})$ on the bottom summand $\mathcal{K}(k)^{\wedge}_{\ell}$ is trivial since $\Aut(\mathbb{G})$ acts trivially on $\pi_0E(\mathbb{G})\cong \pi_0BP_n$ and since the natural map \begin{align}\label{map 30494}\mathcal{K}(BP_n)^{\wedge}_{\ell}&\rightarrow \mathcal{K}(k)^{\wedge}_{\ell}\end{align} is an equivalence. The map \eqref{map 30494} is induced by taking the Postnikov $0$-sections 
\begin{align}
\label{postnikov 0-sect} BP_n&\rightarrow H\pi_0BP_n\end{align}
and then the reduction map 
\begin{equation}
\label{reduction 4} \pi_0BP_n\stackrel{\cong}{\longrightarrow} W(k) \twoheadrightarrow k.
\end{equation} 
To be explicit, the map induced by \eqref{postnikov 0-sect} in $\ell$-complete algebraic $K$-theory is an equivalence due to a Dundas-Goodwillie-McCarthy pullback square, a special case of \eqref{htpy pullback 1}, and $\ell$-adic vanishing of topological cyclic homology of a $p$-complete ring spectrum. Meanwhile, the map induced by \eqref{reduction 4} is an equivalence in $\ell$-complete algebraic $K$-theory, by Gabber rigidity. Both arguments are special cases of the argument given in the proof of Theorem \ref{splitting of k-thy spectrum}.

The triviality of the action of $\Aut(\mathbb{G})$ on the top summand $\Sigma\mathcal{K}(k)^{\wedge}_{\ell}$ of $\mathcal{K}(E(\mathbb{G}))^{\wedge}_{\ell}$ takes a bit more work to see. Recall that the splitting $\mathcal{K}(E(\mathbb{G}))^{\wedge}_{\ell}\simeq \mathcal{K}(k)^{\wedge}_{\ell} \vee \Sigma\mathcal{K}(k)^{\wedge}_{\ell}$ was produced by showing that a certain fiber sequence, \eqref{loc seq 1} from the proof of Theorem \ref{splitting of k-thy spectrum}, splits. That fiber sequence arises by application of Waldhausen's fibration theorem. In this case, the relevant Waldhausen categories are:
\begin{itemize}
\item the Waldhausen category $f\Mod(BP_n)$ of finite cell $BP_n$-module spectra
\item the category $f\Mod(E(\mathbb{G}))$ of finite cell $E(\mathbb{G})$-module spectra,
\item and the category $f\Mod(BP_n)^{wE(\mathbb{G})}$ of finite cell $BP_n$-module spectra which become contractible upon inverting the Bott element $u\in \pi_2BP_n$.
\end{itemize}
In all three of these Waldhausen categories, the cofibrations can be taken to be $w$-cofibrations\footnote{See section VI.3 of \cite{MR1417719} for these ideas.}, and the weak equivalences can be taken to be homotopy equivalences.
An application of Waldhausen's fibration theorem, Theorem 1.6.4 of \cite{MR802796}, yields a fiber sequence of Waldhausen $K$-theory spectra
\[ \mathcal{K}\left(f\Mod(BP_n)^{wE(\mathbb{G})}\right)
\rightarrow \mathcal{K}\left(f\Mod(BP_n)\right)
\rightarrow \mathcal{K}\left(f\Mod(E(\mathbb{G}))\right).\]
Waldhausen's ``spherical object argument'' from section 1.7 of \cite{MR802796}, is an analogue of Quillen's d\'{e}vissage which is adapted for use in Waldhausen categories. An application of the spherical object argument yields that the spectrum $\mathcal{K}\left(f\Mod(BP_n)^{wE(\mathbb{G})}\right)$ is weakly equivalent to $\mathcal{K}\left(f\Mod(\pi_0BP_n)\right)$, which in turn (by, for example, Theorem IV.4.3 of \cite{MR1417719}) is weakly equivalent to $\mathcal{K}\left(\pi_0BP_n\right)$, the algebraic $K$-theory spectrum of the ordinary ring $\pi_0BP_n$. Checking the hypotheses of Waldhausen's fibration theorem and spherical objects argument here is nontrivial, but since $E(\mathbb{G})$ is an \'{e}tale extension of the $p$-complete topological $K$-theory spectrum $KU$, the verification is the same as that already carried out in \cite{MR2413133}. Finally, by another application of Gabber rigidity, $\mathcal{K}(\pi_0BP_n)$ agrees with $\mathcal{K}(k)$ after $\ell$-completion.

We are really repeating ourselves, since the above is only a special case of the argument for Theorem \ref{splitting of k-thy spectrum}. The point here is that the action of $\Aut(\mathbb{G})$ on the top summand $\Sigma\mathcal{K}(k)^{\wedge}_{\ell}$ of $\mathcal{K}(E(\mathbb{G}))^{\wedge}_{\ell}$ is determined by the action of $\Aut(\mathbb{G})$ on the Waldhausen category $f\Mod(BP_n)^{wE(\mathbb{G})}$. The action of $\Aut(\mathbb{G})$ on $f\Mod(BP_n)^{wE(\mathbb{G})}$ is nontrivial, but by the spherical object argument, the $K$-theory spectrum of $f\Mod(BP_n)^{wE(\mathbb{G})}$ agrees with the $K$-theory spectrum of the category of those finite cell $BP_n$-module spectra on which $u\in \pi_2BP_n$ acts nulhomotopically ({\em not} just nilpotently). Such $BP_n$-module spectra are naturally equivalent to $H\pi_0BP_n$-module spectra, and since $\Aut(\mathbb{G})$ acts trivially on $\pi_0BP_n$, we have also that $\Aut(\mathbb{G})$ acts trivially on $H\pi_0BP_n$, hence trivially on the top summand $\Sigma\mathcal{K}(k)^{\wedge}_{\ell}$ of $\mathcal{K}(E(\mathbb{G}))^{\wedge}_{\ell}$. 

If the $p$-height of $\mathbb{G}$ is greater than $1$, then the above argument does {\em not} yield triviality of the action of $\Aut(\mathbb{G})$ on $\mathcal{K}(E(\mathbb{G}))^{\wedge}_{\ell}$. Indeed, in case $\Ht_p(\mathbb{G})>1$, the group $\Aut(\mathbb{G})$ acts nontrivially on $\pi_0E(\mathbb{G})$, so in general one ought to expect that $\Aut(\mathbb{G})$ acts nontrivially on $\mathcal{K}(E(\mathbb{G}))^{\wedge}_{\ell}$.
\end{remark}

Now we are ready for the definition of Jacquet-Langlands cohomology. It will be the generalized cohomology theory represented by a certain spectrum $JL_{\ell}(\mathbb{G})$. Let $p$ be a prime, and let $\mathbb{G}$ be a one-dimensional formal group law, of finite positive $p$-height $n$, over some finite field $k$ of $\mathbb{F}_p$. Fix a prime $\ell\neq p$. The representing spectrum $JL_{\ell}(\mathbb{G})$ will be built by composing a sequence of constructions already discussed in this paper. Because we compose a large number of constructions, it is much easier to read the resulting formulas if we use prefix notation for each construction. Hence we write $\St_x\mathcal{F}$ for the stalk of a sheaf $\mathcal{F}$ at a point $x$.
We also write $\Fib \mathcal{F}$ for fibrant replacement, in the Jardine model structure, 
of a presheaf $\mathcal{F}$ of spectra. Step by step, here\footnote{The author thinks that this step-by-step presentation is the clearest way to present the construction, even though it is repetitive. We apologize to readers who are annoyed by the repetitiveness.} is the process:
\begin{enumerate}
\item For each nonnegative integer $m$, we have the presheaf $\mathcal{O}^{\topsym}_{\lvl p^m}$ of $E_{\infty}$-ring spectra on the small \'{e}tale site of $\Def(\mathbb{G})^{\degen}_{\lvl p^m}$. 
\item Apply algebraic $K$-theory. Now, for each nonnegative integer $m$, we have the presheaf of $E_{\infty}$-ring spectra $\mathcal{K}\circ \mathcal{O}^{\topsym}_{\lvl p^m}$ on the small \'{e}tale site of $\Def(\mathbb{G})^{\degen}_{\lvl p^m}$. 
\item Reduce modulo $\ell^j$, i.e., smash with the mod $\ell^j$ Moore spectrum $S/\ell^j$. Now, for each nonnegative integer $m$ and each positive integer $j$, we have the presheaf of spectra $S/\ell^j\wedge \left(\mathcal{K}\circ \mathcal{O}^{\topsym}_{\lvl p^m}\right)$ on the small \'{e}tale site of $\Def(\mathbb{G})^{\degen}_{\lvl p^m}$. 
\item Fibrantly replace, and apply the vanishing cycles functor $\Phi$. Now, for each nonnegative integer $m$ and each positive integer $j$, we have the presheaf of 
spectra $\Phi \Fib \left( S/\ell^j \wedge \left(\mathcal{K}\circ \mathcal{O}^{\topsym}_{\lvl p^m}\right)\right)$ on the small \'{e}tale site of the special fiber $\left(\Def(\mathbb{G})^{\degen}_{\lvl p^m}\right)_s$ of $\Def(\mathbb{G})^{\degen}_{\lvl p^m}$. 
\item Take the stalk at the closed point $x$ of the special fibre $\left(\Def(\mathbb{G})^{\degen}_{\lvl p^m}\right)_s$. Now, for each positive integer $j$, we have a spectrum \[ \St_x\Phi \Fib \left( S/\ell^j \wedge \left(\mathcal{K}\circ \mathcal{O}^{\topsym}_{\lvl p^m}\right)\right). \]
with a natural action of $GL_n(\mathbb{Z}/p^m\mathbb{Z})\times \Aut(\mathbb{G})$.
\item All the above constructions are natural in the variable $j$. Take the homotopy limit over $j$ to get an $\ell$-complete spectrum 
\[ \underset{j\rightarrow\infty}{\holim} \St_x\Phi \Fib \left( S/\ell^j \wedge \left(\mathcal{K}\circ \mathcal{O}^{\topsym}_{\lvl p^m}\right)\right)\]
for each nonnegative integer $m$.
\item All the above constructions are natural in the variable $m$. Take the homotopy colimit
\begin{equation}
\label{jl def 1}
 \underset{m\rightarrow\infty}{\hocolim}\ \underset{j\rightarrow\infty}{\holim} \St_x \Phi \Fib \left( S/\ell^j \wedge \left(\mathcal{K}\circ \mathcal{O}^{\topsym}_{\lvl p^m}\right)\right).
\end{equation}
\end{enumerate}

\begin{definition}\label{def of jl homology}
By the {\em $\ell$-adic Jacquet-Langlands cohomology associated to $\mathbb{G}$}, we mean the generalized cohomology theory represented by the $GL_n(\hat{\mathbb{Z}}_p)\times \Aut(\mathbb{G})$-equivariant spectrum \eqref{jl def 1}. We write $JL_{\ell}(\mathbb{G})$ for this spectrum.
\end{definition}

\begin{remark}
Beginning in this section, we shall sometimes have to refer to the homotopy fixed points of a group $G$ acting on a spectrum $X$. We will use the standard notation $X^{hG}$ for the homotopy fixed points. 

The group $G$ is profinite, hence by $X^{hG}$ one ought to mean the {\em continuous} homotopy fixed points. The paper \cite{MR2030586} is the standard reference, but it is a bit tricky to define continuous homotopy fixed points in such a way that $\mathcal{K}(E(\mathbb{G}))^{h\Aut(\mathbb{G})}$ has desirable properties (e.g. those predicted by some forms of the Ausoni-Rognes redshift conjecture \cite{AR08}). See Davis's preprint \cite{davis2020construction} for discussion and progress on this point. In Remark 1.8 of \cite{davis2023homotopy}, Davis comments also on constructions of $\mathcal{K}(E(\mathbb{G}))^{h\Aut(\mathbb{G})}$ using condensed methods, and using pyknotic methods. However, for what we do in this paper, we do not need very much, so even a very na\"{i}ve model for continuous homotopy fixed points will do: we will only need to consider $(\mathcal{K}(E(\mathbb{G}))^{\wedge}_{\ell})^{h\Aut(\mathbb{G})}$, the continuous homotopy fixed points of the $\Aut(\mathbb{G})$ action on the $\ell$-adic completion of $\mathcal{K}(E(\mathbb{G}))$, and the only property we really need these continuous homotopy fixed points to have is that the natural map 
\begin{eqnarray}\nonumber\pi_*\left( F_{H\overline{\mathbb{Q}}_{\ell}}\left( H\overline{\mathbb{Q}}_{\ell}\wedge_{\hat{S}_{\ell}} X,H\overline{\mathbb{Q}}_{\ell}\wedge_{\hat{S}_{\ell}} JL_{\ell}(\mathbb{G})\right)^{h\Aut(\mathbb{G})}\right)\ \ \ \ \ \ \ \ \ \ \ \ \ \ \ \ \ \ \ \ \ \ \ \ 
 \\ \label{weak continuity map} \ \ \ \ \ \ \ \ \ \ \ \ \ \ \ \ \ \ \ \ \ \rightarrow \hom_{\overline{\mathbb{Q}}_{\ell}}\left( \overline{\mathbb{Q}}_{\ell} \otimes_{\hat{\mathbb{Z}}_{\ell}}\pi_*X,\overline{\mathbb{Q}}_{\ell} \otimes_{\mathbb{Q}_{\ell}} H_*(JL_{\ell}(\mathbb{G});\mathbb{Q})\right)^{\Aut(\mathbb{G})}
\end{eqnarray}
is an isomorphism, where $X$ is a finite $\hat{S}_{\ell}$-module with a suitable action of $\Aut(\mathbb{G})$, and where the spectrum $JL_{\ell}(\mathbb{G})$ is defined below, in Definition \ref{def of jl homology}. Since $\Aut(\mathbb{G})$ is an extension of the unit group of $\mathbb{F}_{p^n}$ by the pro-$p$-group $\strict\Aut(\mathbb{G})$ of strict automorphisms of $\mathbb{G}$, the cohomology of $\Aut(\mathbb{G})$ with coefficients in any $\ell$-power-torsion topological $\Aut(\mathbb{G})$-module will be $\ell$-power-torsion in positive degrees, hence annihilated by taking a colimit over repeated multiplication by $\ell$. The point is that the continuous cohomology of $\Aut(\mathbb{G})$ with coefficients in 
\begin{align*}\pi_*F_{H\overline{\mathbb{Q}}_{\ell}}\left( H\overline{\mathbb{Q}}_{\ell}\wedge_{\hat{S}_{\ell}} X,H\overline{\mathbb{Q}}_{\ell}\wedge_{\hat{S}_{\ell}} JL_{\ell}(\mathbb{G})\right) &\cong \left[\Sigma^* X, H\overline{\mathbb{Q}}_{\ell}\wedge_{\hat{S}_{\ell}} JL_{\ell}(\mathbb{G})\right]\end{align*}
vanishes in positive degrees, so any reasonable model for the continuous homotopy $\Aut(\mathbb{G})$-fixed points will yield the isomorphism \eqref{weak continuity map}.

This paper does not investigate $p$-adic (rather than $\ell$-adic) TJL duality, but any such investigations would likely require a more subtle model for $h\Aut(\mathbb{G})$ of the kinds discussed and constructed by Davis, particularly for the sake of having suitable fixed-point spectral sequences.
\end{remark}

Now here is the main theorem of this section:
\begin{theorem}\label{main thm}
Let $n$ be the $p$-height of a one-dimensional formal group law $\mathbb{G}$ over $\overline{\mathbb{F}}_p$.
Suppose $n$ is finite. 
Then the supercuspidal part of the $\ell$-adic Jacquet-Langlands correspondence for $\mathbb{Q}_p$ is realized in the rational stable homotopy group 
\begin{align}
\label{ratl htpy 0 1}
 \overline{\mathbb{Q}}_{\ell}\otimes_{\hat{\mathbb{Z}}_{\ell}} \pi_{1-n}\left( JL_{\ell}(\mathbb{G})\right).
\end{align}

In more detail: for each supercuspidal irreducible $\overline{\mathbb{Q}}_{\ell}$-linear representation $\pi$ of $GL_n(\mathbb{Q}_p)$, the $\overline{\mathbb{Q}}_{\ell}$-linear representation $\left(\pi\mid_{GL_n(\hat{\mathbb{Z}}_p)}\right)\otimes \mathcal{JL}(\pi)$ of $GL_n(\hat{\mathbb{Z}}_p)\times \Aut(\mathbb{G})$ occurs as a summand in \eqref{ratl htpy 0 1}, where $\mathcal{JL}(\pi)$ is the representation of 
\begin{equation*}\Aut(\mathbb{G})\cong \mathcal{O}^{\times}_{D_{1/n,\mathbb{Q}_p}} \subseteq D^{\times}_{1/n,\mathbb{Q}_p}\end{equation*} associated to $\pi$ by the Jacquet-Langlands correspondence constructed in \cite{MR0771672} and \cite{MR0700135}.
\end{theorem}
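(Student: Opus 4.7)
The strategy is to feed the splitting of Theorem \ref{splitting of k-thy spectrum} into the vanishing cycles stalk spectral sequence \eqref{vanishing cycles stalk ss 1}, and then to isolate the supercuspidal summands using Theorems \ref{coh splitting thm} and \ref{strauch thm}. Write $\mathcal{F} := (\mathcal{K}\circ\mathcal{O}^{\topsym}_{\lvl p^m})^{\wedge}_{\ell}$. By Theorem \ref{splitting of k-thy spectrum} there is a $GL_n(\hat{\mathbb{Z}}_p)\times\Aut(\mathbb{G})$-equivariant splitting $\mathcal{F} \simeq \mathcal{K}(k(-))^{\wedge}_{\ell} \vee \Sigma\mathcal{K}(k(-))^{\wedge}_{\ell}$. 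Since $\mathbb{G}$ is defined over $\overline{\mathbb{F}}_p$, every residue field $k(U)$ appearing at a geometric point is separably closed, so Suslin rigidity gives $\mathcal{K}(k(U))^{\wedge}_{\ell} \simeq ku^{\wedge}_{\ell}$, and by Remark \ref{ht 1 aut(G) triviality} the action of $\Aut(\mathbb{G})$ on $\mathcal{K}(k(U))^{\wedge}_{\ell}$ is trivial. Consequently the sheafified homotopy groups $\widetilde{\pi_t\mathcal{F}}$ are constant sheaves isomorphic to $\hat{\mathbb{Z}}_{\ell}$ for $t\geq 0$, vanishing otherwise, and the entire $GL_n(\hat{\mathbb{Z}}_p)\times\Aut(\mathbb{G})$-action on $\mathcal{F}$ is realized geometrically, through the action on the degenerating Lubin-Tate tower.

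Plugging this into spectral sequence \eqref{vanishing cycles stalk ss 1} at the closed point $x$, Berkovich's theory together with Lemma 2.5.1 of \cite{MR2383890} identifies
\[ E_1^{s,t} \cong H^s_c(\mathcal{D}_m;\hat{\mathbb{Z}}_{\ell}), \]
where $\mathcal{D}_m$ is the rigid analytic generic fibre of $\Def(\mathbb{G})^{\degen}_{\lvl p^m}$. This vanishes unless $n-1\leq s\leq 2n-2$. Positions with $t-s = 1-n$, i.e.\ those contributing to $\pi_{1-n}$, are $(s,t)=(n-1+k, k)$ for $k = 0, 1, \dots, n-1$; the bottom such position is $E_1^{n-1, 0} \cong H^{n-1}_c(\mathcal{D}_m;\hat{\mathbb{Z}}_{\ell})$. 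By Theorem \ref{coh splitting thm}, $H^{n-1}_c(\Def(\mathbb{G})_{\lvl p^m};\hat{\mathbb{Z}}_{\ell})$ is a $GL_n(\hat{\mathbb{Z}}_p)\times\Aut(\mathbb{G})$-equivariant summand of $E_1^{n-1, 0}$. Passing to the colimit in $m$, tensoring with $\overline{\mathbb{Q}}_{\ell}$, and invoking Theorem \ref{strauch thm} (Harris--Taylor, Strauch, Mieda) then exhibits $(\pi|_{GL_n(\hat{\mathbb{Z}}_p)})\otimes\mathcal{JL}(\pi)$ as a summand of this classical piece of $E_1^{n-1, 0}$ for each supercuspidal irreducible $\pi$.

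The main obstacle is to show that these supercuspidal summands survive the spectral sequence and remain isolated on the $E_\infty$-page. Incoming differentials to $E_r^{n-1,0}$ would originate at $E_r^{n-1-r, 1-r}$, which vanishes since the homotopy of $ku^{\wedge}_{\ell}\vee\Sigma ku^{\wedge}_{\ell}$ is concentrated in nonnegative degrees. Outgoing differentials from $E_r^{n-1,0}$ land at $(n-1+r, r-1)$, and extension problems within $\pi_{1-n}$ connect $E_\infty^{n-1, 0}$ to $E_\infty^{n-1+k, k}$ for $k\geq 1$; all such positions are subquotients of $H^{n-1+j}_c(\mathcal{D}_m;\overline{\mathbb{Q}}_{\ell}(\cdot))$ with $j\geq 1$. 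By the classical cohomology theory of the Lubin-Tate tower used in Theorem \ref{strauch thm}, supercuspidal representations of $GL_n(\mathbb{Q}_p)$ do not occur in $H^j_c(\Def(\mathbb{G})_{\lvl p^m};\overline{\mathbb{Q}}_{\ell})$ for $j\neq n-1$. By the geometric description of the non-classical summands underlying Theorem \ref{coh splitting thm} (as products of shifted copies of lower-level classical Lubin-Tate strata, as illustrated in Examples \ref{ht 1 example 1}--\ref{ht 2 example}), those summands likewise contribute only non-supercuspidal representations. A Schur's lemma argument applied to the semisimple rationalized action of $GL_n(\hat{\mathbb{Z}}_p)\times\Aut(\mathbb{G})$ then forces every differential and every extension involving the supercuspidal summand of $E_1^{n-1, 0}$ to be zero.

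Finally, passing to the homotopy limit over $j$ preserves the rational part (the $\ell$-adic Milnor $\lim^1$-terms are $\ell$-torsion and die upon rationalization), and taking the homotopy colimit over $m$ assembles the individual supercuspidal summands into the full supercuspidal part of the Jacquet-Langlands correspondence for $\mathbb{Q}_p$, yielding the claimed realization in $\overline{\mathbb{Q}}_{\ell}\otimes_{\hat{\mathbb{Z}}_{\ell}}\pi_{1-n}(JL_{\ell}(\mathbb{G}))$.
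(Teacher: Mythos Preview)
Your overall strategy matches the paper's: feed the splitting of Theorem~\ref{splitting of k-thy spectrum} into the vanishing cycles stalk spectral sequence, identify the bottom $E_1$-term with compactly supported cohomology of the generic fibre $\mathcal{D}_m$, extract the classical Lubin--Tate summand via Theorem~\ref{coh splitting thm}, and invoke Theorem~\ref{strauch thm}. Where you diverge from the paper is in the argument that the supercuspidal summand of $E_1^{n-1,0}$ survives to $E_\infty$, and this is where a genuine gap appears.

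Your survival argument runs through Schur's lemma: differentials and extension maps out of the supercuspidal summand land in subquotients of $H^{n-1+j}_c(\mathcal{D}_m;\overline{\mathbb{Q}}_{\ell})$ for $j\geq 1$; you then claim these targets contain no supercuspidals. For the \emph{classical} summand $H^{n-1+j}_c(\Def(\mathbb{G})_{\lvl p^m};\overline{\mathbb{Q}}_\ell)$ this is indeed Mieda's theorem. But for the \emph{non-classical} summands---the partial Lubin--Tate spaces of Proposition~\ref{generic fibre decomp} and Lemma~\ref{partial lubin-tate decomp lemma}---you assert that they are ``products of shifted copies of lower-level classical Lubin--Tate strata'' and hence contribute only non-supercuspidals. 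This description is not accurate at general height and level (the partial spaces $\Spf L_d$ are finite \'etale covers of the base ball, not products of classical Lubin--Tate spaces, and the $GL_n$-action permutes a Grassmannian's worth of them nontrivially), and nothing in the paper establishes that their $H^{>n-1}_c$ is supercuspidal-free. The Examples you cite cover only height~1 and height~2 level~1. So your Schur step rests on an unproven representation-theoretic claim about the degenerating tower.

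The paper sidesteps this completely. Instead of tracking one summand, it uses the wedge decomposition $\mathcal{F}_j\simeq \mathcal{F}_j'\vee\Sigma\mathcal{F}_j'$ to split the \emph{entire spectral sequence} into two copies, one with $E_1^{s,t}$ concentrated in even $t$, the other in odd $t$. A parity check on the bidegree of $d_r$ then forces every differential to vanish, so the spectral sequence collapses at $E_1$. The rational filtration on $\pi_{1-n}$ then splits $GL_n(\hat{\mathbb{Z}}_p)\times\Aut(\mathbb{G})$-equivariantly by semisimplicity, and the bottom filtration quotient $\colim_m H^{n-1}_c(\Def(\mathbb{G})^{\degen}_{\lvl p^m};\hat{\mathbb{Z}}_\ell)$ contains the classical summand by Theorem~\ref{coh splitting thm}. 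No knowledge whatsoever of the representation content of $H^{>n-1}_c$ of the partial pieces is required. You already had the splitting in hand; the move you missed is to use it to split the spectral sequence rather than merely to identify the homotopy sheaves.

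One minor point: your citation of Remark~\ref{ht 1 aut(G) triviality} for the triviality of the $\Aut(\mathbb{G})$-action on $\mathcal{K}(k(U))^{\wedge}_\ell$ is misplaced---that remark is explicitly about height~1 and notes that the action on $\mathcal{K}(E(\mathbb{G}))^{\wedge}_\ell$ is \emph{nontrivial} for $n>1$. What you want is simply that after sheafification the presheaf becomes a constant sheaf with value $ku^{\wedge}_\ell\vee\Sigma ku^{\wedge}_\ell$, so the group action is purely through the base; this follows from Theorem~\ref{splitting of k-thy spectrum} and Suslin rigidity, not from the cited remark.
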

\begin{proof}
We begin with a modification to the construction of the vanishing cycles stalk spectral sequence \eqref{vanishing cycles stalk ss 1}. Write $\mathcal{F}_j$ as an abbreviation for the fibrant replacement of the presheaf of spectra $S/\ell^j \wedge \left( \mathcal{K}\circ \mathcal{O}^{top}_{\lvl p^m}\right)$ on $\left(\Def(\mathbb{G})^{\degen}_{\lvl p^m}\right)_{\et}$. We have a sequence of fibrant presheaves of spectra
\[ \dots \rightarrow \mathcal{F}_2 \rightarrow \mathcal{F}_1 \rightarrow \mathcal{F}_0.\]
Let $\Post(\mathcal{F}_j)$ denote the 
fibrant sectionswise Postnikov tower of $\mathcal{F}_j$. Apply the stalk functor $\St_x$. Since all these constructions are functorial in the choice of presheaf, we have a sequence of towers of fibrations of spectra
\[ \dots \rightarrow \St_x\Phi\Post(\mathcal{F}_2) \rightarrow \St_x\Phi\Post(\mathcal{F}_1) \rightarrow \St_x\Phi\Post(\mathcal{F}_0),\]
consequently a levelwise homotopy limit $\holim_{j\rightarrow\infty} \St_x\Phi\Post(\mathcal{F}_j)$.

Consider the spectral sequence obtained by taking homotopy groups of the tower of fibrations $\holim_{j\rightarrow\infty} \St_x\Phi \Post(\mathcal{F}_j)$. We will call this spectral sequence the ``modified vanishing cycles stalk spectral sequence.'' It converges to 
\[ \pi_*\left(\underset{j\rightarrow\infty}{\holim}\St_x \Phi  \left( S/\ell^j \wedge \left(\mathcal{K}\circ \mathcal{O}^{\topsym}_{\lvl p^m}\right)\right)\right),\]
by the same analysis we gave, in \cref{Nearby cycles and vanishing cycles...}, to identify the abutment of the vanishing cycles stalk spectral sequence \eqref{vanishing cycles stalk ss 1}. 

The $E_1$-term of the modified vanishing cycles stalk spectral sequence is computable by a Milnor sequence:
\begin{align}\label{milnor seq 23} 0 \rightarrow
 \lim_{j\rightarrow\infty}{}^1 \left( R^s\Phi \widetilde{\pi_{t+1}(\mathcal{F}_j)}\right)_x \rightarrow
 E_1^{s,t} 
 \rightarrow \lim_{j\rightarrow\infty} \left(R^s\Phi \widetilde{\pi_t(\mathcal{F}_j)}\right)_x \rightarrow 0.\end{align}
The $\lim^1$-term in \eqref{milnor seq 23} vanishes, since $\lim^1$ vanishes on sequences of finite abelian groups, and 
since each $\left(R^s\Phi \widetilde{\pi_t(\mathcal{F}_j)}\right)_x$ is finite as a consequence of Proposition 4.4 of \cite{MR1395723}; this uses the assumption that $k$ is separably closed, and the finiteness of the disjoint union in Proposition \ref{generic fibre decomp}.

By Theorem \ref{splitting of k-thy spectrum}, for each integer $t$ and each nonnegative integer $j$, $\pi_t(\mathcal{F}_j)$ is a constructible sheaf on $\left(\Def(\mathbb{G})^{\degen}_{\lvl p^m}\right)_{\et}$, and it is of torsion order prime to $p$ (since $\ell\neq p$).
The group $\left(R^s\Phi \widetilde{\pi_t(\mathcal{F}_j)}\right)_x$ vanishes for $s<n-1$ and for $s>2n-2$, by the vanishing argument given in \cref{Nearby cycles and vanishing cycles...}. Consequently $E_1^{*,*}$ is concentrated between the $s=n-1$ and $s=2n-2$ lines, and the spectral sequence converges strongly.

We claim that the modified vanishing cycles stalk spectral sequence in fact has no nonzero differentials. The argument is very simple: Theorem \ref{splitting of k-thy spectrum} splits $\mathcal{F}_j$ as a wedge of two presheaves of spectra, $\mathcal{F}_j \simeq \mathcal{F}_j^{\prime} \vee \Sigma \mathcal{F}_j^{\prime}$. Each of the operations in the construction of Jacquet-Langlands homology in Definition \eqref{def of jl homology} is functorial on presheaves of spectra, and consequently the modified vanishing cycles stalk spectral sequence splits as the direct sum of two spectral sequences:
\begin{itemize}
\item a spectral sequence converging to $\pi_*\underset{j\rightarrow\infty}{\holim} \St_x \Phi \Fib \left(\mathcal{F}_j^{\prime}\right)$, 
\item and an isomorphic copy of that same spectral sequence, with all internal degrees ($t$) increased by one.
\end{itemize}
Since $\widetilde{\pi_*(\mathcal{F}_j^{\prime})}$ is concentrated in even degrees, the first summand spectral sequence collapses at $E_1$ for degree reasons. The second spectral sequence collapses at $E_1$ by the same argument.

Consequently $\pi_{1-n}\left( JL_{\ell}(\mathbb{G})\right)$ has a finite filtration in which the filtration quotients are
\begin{align} 
\label{summand 1}
\colim_{m\rightarrow\infty}H^{n-1}_c\left(\Def(\mathbb{G})^{\degen}_{\lvl p^m};\hat{\mathbb{Z}}_{\ell}(0)\right),\ &
\colim_{m\rightarrow\infty}H^{n}_c\left(\Def(\mathbb{G})^{\degen}_{\lvl p^m};\hat{\mathbb{Z}}_{\ell}(0)\right), \\
\label{summand 2}
\colim_{m\rightarrow\infty}H^{n+1}_c\left(\Def(\mathbb{G})^{\degen}_{\lvl p^m};\hat{\mathbb{Z}}_{\ell}(1)\right),\ &
\colim_{m\rightarrow\infty}H^{n+2}_c\left(\Def(\mathbb{G})^{\degen}_{\lvl p^m};\hat{\mathbb{Z}}_{\ell}(1)\right), \\
\label{summand 3}
\colim_{m\rightarrow\infty}H^{n+3}_c\left(\Def(\mathbb{G})^{\degen}_{\lvl p^m};\hat{\mathbb{Z}}_{\ell}(2)\right),\ &
\colim_{m\rightarrow\infty}H^{n+4}_c\left(\Def(\mathbb{G})^{\degen}_{\lvl p^m};\hat{\mathbb{Z}}_{\ell}(2)\right), \\
\dots,\  &\ \  \dots ,
\end{align}
eventually ending with the $\colim_{m\rightarrow\infty}H^{2n-2}_c$ summand. Rationally, the filtration splits $GL_n(\hat{\mathbb{Z}}_p)\times \Aut(\mathbb{G})$-equivariantly.
By Theorem \ref{coh splitting thm}, after tensoring up to $\overline{\mathbb{Q}}_{\ell}$, the bottommost filtration quotient has $\colim_{m\rightarrow\infty}H^{n-1}_c(\Def(\mathbb{G})_{\lvl p^m};\mathbb{Q}_{\ell})$ as an $GL_n(\hat{\mathbb{Z}}_p)\times \Aut(\mathbb{G})$-equivariant summand. This summand realizes the supercuspidal part of the $\ell$-adic Jacquet-Langlands correspondence, by \cite{MR2680204}, as explained in \cref{Review of the role...}.
\end{proof}
The middle-dimensional cohomology $\colim_{m\rightarrow\infty}H^{n-1}_c(\Def(\mathbb{G})_{\lvl p^m}; \overline{\mathbb{Q}}_{\ell})$ of the Lubin-Tate tower is where the supercuspidal representations and, by \cite{MR2680204}, {\em only} the supercuspidal representations of $GL_n$. In $\colim_{m\rightarrow\infty}H^{i}_c(\Def(\mathbb{G})_{\lvl p^m}; \mathbb{Q}_{\ell})$ for $i>n-1$, one instead finds with principal series representations and non-supercuspidal discrete series representations of $GL_n$.  It follows from the method of proof of Theorem \ref{main thm}---namely, the collapse of the modified vanishing cycles stalk spectral sequence and the splitting of the cohomology of $\Def(\mathbb{G})^{\degen}_{\lvl p^m}$ obtained in Theorem \ref{splitting of k-thy spectrum}---that each cohomology group $\colim_{m\rightarrow\infty}H_c^i(\Def(\mathbb{G})^{\degen}_{\lvl p^m};\overline{\mathbb{Q}}_{\ell})$ occurs $GL_n(\hat{\mathbb{Z}}_p)\times \Aut(\mathbb{G})$-equivariantly in the rational stable homotopy groups $\overline{\mathbb{Q}}_{\ell}\otimes_{\hat{\mathbb{Z}}_{\ell}} \pi_{*}\left( JL_{\ell}(\mathbb{G})\right)$ of the Jacquet-Langlands spectrum $JL_{\ell}(\mathbb{G})$. Consequently it is not only the supercuspidal part of the Jacquet-Langlands correspondence (coming from the middle-degree cohomology) which is realized in the rational stable homotopy of $JL_{\ell}(\mathbb{G})$: the summands providing the {\em rest} of the correspondence are all present as well, but they are spread out across various degrees, and present with many redundant copies and with various Tate twists, due to the degenerating Lubin-Tate tower having much more cohomology than just that coming from the natural copy of the classical Lubin-Tate tower sitting inside of it. See \cref{l-adic TJL duality} for an explicit example at height $1$.

As a consequence of Theorem \ref{main thm}, we have:
\begin{corollary}\label{main cor}
Let $n$ be the $p$-height of the one-dimensional formal group law $\mathbb{G}$ over $\overline{\mathbb{F}}_p$. 
Suppose $n$ is finite, suppose $\ell$ is a prime distinct from $p$, and suppose that $X$ is an $\ell$-complete spectrum equipped with an 
action of $\Aut(\mathbb{G})$. Then, for each integer $i$ and each irreducible subrepresentation $\rho$ of the $\overline{\mathbb{Q}}_{\ell}$-linear representation $\overline{\mathbb{Q}}_{\ell}\otimes_{\hat{\mathbb{Z}}_{\ell}} \pi_i(X)$ of $\Aut(\mathbb{G})$ such that the representation $\mathcal{JL}(\rho)$ of $GL_n$ is supercuspidal, $\mathcal{JL}(\rho)$ occurs as a factor in $\pi_{1-n-i}\left( F(X,H\overline{\mathbb{Q}}_{\ell}\wedge_{\hat{S}_{\ell}} JL_{\ell}(\mathbb{G}))^{h\Aut(\mathbb{G})}\right)$.
\end{corollary}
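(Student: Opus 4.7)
The plan is to reduce the statement to Theorem \ref{main thm} via the weak continuity isomorphism \eqref{weak continuity map} from the preceding remark, followed by a Schur-type decomposition.

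First I would unpack the rational homotopy of the target. Combining the adjunction $F(X, H\overline{\mathbb{Q}}_\ell \wedge_{\hat{S}_\ell} (-)) \simeq F_{H\overline{\mathbb{Q}}_\ell}(H\overline{\mathbb{Q}}_\ell \wedge_{\hat{S}_\ell} X, -)$ with the isomorphism \eqref{weak continuity map}, one identifies $\pi_{1-n-i}\left(F(X, H\overline{\mathbb{Q}}_\ell \wedge_{\hat{S}_\ell} JL_\ell(\mathbb{G}))^{h\Aut(\mathbb{G})}\right)$ with the internal-degree-$(1-n-i)$ component of
\[
\hom_{\overline{\mathbb{Q}}_\ell}\bigl(\overline{\mathbb{Q}}_\ell \otimes_{\hat{\mathbb{Z}}_\ell} \pi_* X,\; \overline{\mathbb{Q}}_\ell \otimes_{\hat{\mathbb{Z}}_\ell} \pi_* JL_\ell(\mathbb{G})\bigr)^{\Aut(\mathbb{G})}.
\]
In particular, this graded Hom contains, as a $GL_n(\hat{\mathbb{Z}}_p)$-representation summand, the block $\hom_{\Aut(\mathbb{G})}\bigl(\overline{\mathbb{Q}}_\ell \otimes \pi_i X,\; \overline{\mathbb{Q}}_\ell \otimes \pi_{1-n} JL_\ell(\mathbb{G})\bigr)$, so it is enough to exhibit $\mathcal{JL}(\rho)|_{GL_n(\hat{\mathbb{Z}}_p)}$ as a $GL_n(\hat{\mathbb{Z}}_p)$-summand of that block.

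Next I would apply Theorem \ref{main thm} with $\pi := \mathcal{JL}(\rho)$, which is supercuspidal by hypothesis. Since the Jacquet--Langlands correspondence is an involution, Theorem \ref{main thm} supplies a $GL_n(\hat{\mathbb{Z}}_p)\times\Aut(\mathbb{G})$-equivariant summand $\bigl(\mathcal{JL}(\rho)|_{GL_n(\hat{\mathbb{Z}}_p)}\bigr)\otimes \rho$ in $\overline{\mathbb{Q}}_\ell \otimes \pi_{1-n} JL_\ell(\mathbb{G})$. Restricting the source to the irreducible subrepresentation $\rho \subseteq \overline{\mathbb{Q}}_\ell \otimes \pi_i X$ and projecting the target onto this summand produces a $GL_n(\hat{\mathbb{Z}}_p)$-equivariant map
\[
\hom_{\Aut(\mathbb{G})}\bigl(\rho,\; \bigl(\mathcal{JL}(\rho)|_{GL_n(\hat{\mathbb{Z}}_p)}\bigr)\otimes \rho\bigr) \longrightarrow \pi_{1-n-i}\left(F(X, H\overline{\mathbb{Q}}_\ell \wedge_{\hat{S}_\ell} JL_\ell(\mathbb{G}))^{h\Aut(\mathbb{G})}\right).
\]
By Schur's lemma applied to the irreducible $\Aut(\mathbb{G})$-representation $\rho$, the domain is $GL_n(\hat{\mathbb{Z}}_p)$-equivariantly isomorphic to $\mathcal{JL}(\rho)|_{GL_n(\hat{\mathbb{Z}}_p)}$ itself, delivering the required factor.

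The main difficulty I expect is not in any individual step but in verifying that the weak continuity isomorphism \eqref{weak continuity map} applies in the generality stated: the corollary allows $X$ to be an arbitrary $\ell$-complete spectrum, whereas the remark only promises that \eqref{weak continuity map} is an isomorphism for $X$ a finite $\hat{S}_\ell$-module. The standard reduction is to pass to a finitely generated $\Aut(\mathbb{G})$-stable subrepresentation of $\pi_i X$ containing $\rho$, realize a suitable finite piece of $X$ whose rational homotopy sees that subrepresentation, and exploit the fact that the Schur projection above depends only on the $\rho$-isotypic component. A secondary bookkeeping point is that Schur's lemma must be invoked for possibly infinite-dimensional smooth $\Aut(\mathbb{G})$-representations, but this is harmless since $\rho$ is irreducible in the $\overline{\mathbb{Q}}_\ell$-linear sense and its endomorphism algebra is $\overline{\mathbb{Q}}_\ell$.
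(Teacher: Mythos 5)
Your proposal follows essentially the same route as the paper: both proofs first rewrite $\pi_{1-n-i}$ of the homotopy fixed-point mapping spectrum via the comparison map \eqref{weak continuity map} as an internal $\hom$ of $\Aut(\mathbb{G})$-representations, identify the block $\hom_{\overline{\mathbb{Q}}_{\ell}[\Aut(\mathbb{G})]}(\overline{\mathbb{Q}}_{\ell}\otimes\pi_iX,\ \overline{\mathbb{Q}}_{\ell}\otimes\pi_{1-n}JL_{\ell}(\mathbb{G}))$ as a $GL_n(\hat{\mathbb{Z}}_p)$-equivariant summand of that $\hom$, and then invoke Theorem \ref{main thm} to locate the factor $\mathcal{JL}(\rho)$ inside that block. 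The paper's proof stops at exhibiting the $\hom$-block as a summand and leaves the final Schur-style extraction of $\mathcal{JL}(\rho)$ implicit, whereas you spell that step out; your concern about the generality of $X$ in applying \eqref{weak continuity map} is a legitimate point that the paper also does not explicitly address, and your suggested reduction to a finitely generated $\Aut(\mathbb{G})$-stable piece is a reasonable way to fill it.
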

\begin{proof}
We have isomorphisms
\begin{align*}
 & \pi_{1-n-i}\left( F(X,H\overline{\mathbb{Q}}_{\ell}\wedge_{\hat{S}_{\ell}} JL_{\ell}(\mathbb{G}))^{h\Aut(\mathbb{G})}\right) \\
  &\cong \pi_{1-n-i}\left( F_{H\overline{\mathbb{Q}}_{\ell}}\left( H\overline{\mathbb{Q}}_{\ell}\wedge_{\hat{S}_{\ell}} X,H\overline{\mathbb{Q}}_{\ell}\wedge_{\hat{S}_{\ell}} JL_{\ell}(\mathbb{G})\right)^{h\Aut(\mathbb{G})}\right) \\
  &\cong \hom_{\overline{\mathbb{Q}}_{\ell}}\left( \overline{\mathbb{Q}}_{\ell} \otimes_{\hat{\mathbb{Z}}_{\ell}}\Sigma^{1-n-i} \pi_*X,\overline{\mathbb{Q}}_{\ell} \otimes_{\hat{\mathbb{Z}}_{\ell}} \pi_*JL_{\ell}(\mathbb{G})\right)^{\Aut(\mathbb{G})} \\
  &\cong \hom_{\overline{\mathbb{Q}}_{\ell}[\Aut(\mathbb{G})]}\left( \overline{\mathbb{Q}}_{\ell} \otimes_{\hat{\mathbb{Z}}_{\ell}}\Sigma^{1-n-i} \pi_*X,\overline{\mathbb{Q}}_{\ell} \otimes_{\hat{\mathbb{Z}}_{\ell}} \pi_*JL_{\ell}(\mathbb{G})\right).
\end{align*}
In particular, by Theorem \ref{main thm}, $\pi_{1-n-i}\left( F(X,H\overline{\mathbb{Q}}_{\ell}\wedge_{\hat{S}_{\ell}} JL_{\ell}(\mathbb{G}))^{h\Aut(\mathbb{G})}\right)$ contains 
\begin{equation}\label{hom 4349} \hom_{\overline{\mathbb{Q}}_{\ell}[\Aut(\mathbb{G})]}\left( \overline{\mathbb{Q}}_{\ell} \otimes_{\hat{\mathbb{Z}}_{\ell}} \pi_iX,\overline{\mathbb{Q}}_{\ell} \otimes_{\hat{\mathbb{Z}}_{\ell}} \pi_{1-n}JL_{\ell}(\mathbb{G})\right)
\end{equation}
as a $GL_n(\hat{\mathbb{Z}}_p)$-equivariant summand.
\end{proof}

\section{$\ell$-adic topological Jacquet-Langlands duality.}
\label{l-adic TJL duality}

We continue to assume that the formal group $\mathbb{G}$ is defined over a field of characteristic $p\neq \ell$. 
For any spectrum $X$ with an 
action of $\Aut(\mathbb{G})$, the mapping spectrum $F(X,JL_{\ell}(\mathbb{G}))$ has an action of $GL_n(\hat{\mathbb{Z}}_p)$, and its homotopy groups are $\hat{Z}_{\ell}$-modules. 
By Corollary \ref{main cor}, if we take continuous homotopy fixed-points of the action of $\Aut(\mathbb{G})$ on $F(X,H\overline{\mathbb{Q}}_{\ell}\wedge_{\hat{S}_{\ell}}JL_{\ell}(\mathbb{G}))$, the resulting spectrum has the $\ell$-adic Jacquet-Langlands ``dual'' of $\overline{\mathbb{Q}}_{\ell}\otimes_{\hat{\mathbb{Z}}_{\ell}}\pi_*X$ as a summand in its rational homology. 

If the homotopy groups of $X$ are already $p$-complete (not $\ell$-complete!), then $F(X,JL_{\ell}(\mathbb{G}))$ will be contractible. The same is true even if the homotopy groups of $X$ are merely assumed to be $\hat{\mathbb{Z}}_p$-modules. In algebraic topology, when do we find ourselves with a spectrum with an action of $\Aut(\mathbb{G})$, but whose homotopy groups are {\em not} $\hat{\mathbb{Z}}_p$-modules? The most natural idea is to simply smash a spectrum $Y$ with the Morava $E$-theory spectrum $E(\mathbb{G})$, but this idea will not work, since the homotopy groups of that smash product will be $\hat{\mathbb{Z}}_p$-modules.

When $\ell\neq p$, the author knows only of one reasonably natural source of non-Eilenberg-Mac Lane $\ell$-complete spectra with an action of $\Aut(\mathbb{G})$. Let $E$ be an $E_{\infty}$-ring spectrum equipped with an action of $\Aut(\mathbb{G})$; the most obvious example is $E = E(\mathbb{G})$. Then the $\ell$-complete algebraic $K$-theory spectrum $\mathcal{K}(E)^{\wedge}_{\ell}$ is also an $E_{\infty}$-ring spectrum, and has an action of $\Aut(\mathbb{G})$ by the functoriality of $\mathcal{K}$, but $\mathcal{K}(E)^{\wedge}_{\ell}$ is also $\ell$-complete. The action of $\Aut(\mathbb{G})$ on $\mathcal{K}(E(\mathbb{G}))$ has been studied very recently: see \cite{davis2023continuous} and \cite{davis2023homotopy}, for example.

By Corollary \ref{main cor}, whenever the action of $\Aut(\mathbb{G})$ on the rationalized $\ell$-complete algebraic $K$-groups \[ K_*(E;\overline{\mathbb{Q}}_{\ell}) := \overline{\mathbb{Q}}_{\ell}\otimes_{\hat{\mathbb{Z}}_{\ell}}\pi_*\left( \mathcal{K}(E)^{\wedge}_{\ell}\right)\] has a summand which is the Jacquet-Langlands dual $\mathcal{JL}(\pi)$ of some irreducible supercuspidal $\overline{\mathbb{Q}}_{\ell}$-linear representation of $GL_n$, the representation $\pi$ occurs as a summand in the homotopy groups of $F(\mathcal{K}(E)^{\wedge}_{\ell},H\overline{\mathbb{Q}}_{\ell}\wedge_{\hat{S}_{\ell}} JL_{\ell}(\mathbb{G}))^{h\Aut(\mathbb{G})}$.

More broadly: $\mathcal{K}(E)^{\wedge}_{\ell}$ represents the generalized homology theory which sends a spectrum $X$ to 
\begin{align}\label{gen hom thy 1} \left(\mathcal{K}(E)^{\wedge}_{\ell}\right)_*(X) &= \pi_*(\mathcal{K}(E)^{\wedge}_{\ell}\wedge X).\end{align}
This generalized homology theory takes its values in $\hat{\mathbb{Z}}_{\ell}$-linear representations of $\Aut(\mathbb{G})$. We also have the generalized cohomology theory which sends a spectrum $X$ to 
\begin{align}\label{gen coh thy 1} \pi_*\left( F(\mathcal{K}(E)^{\wedge}_{\ell}\wedge X,H\overline{\mathbb{Q}}_{\ell}\wedge_{\hat{S}_{\ell}} JL_{\ell}(\mathbb{G}))^{h\Aut(\mathbb{G})}\right).\end{align}
This generalized cohomology theory takes its values in $\overline{\mathbb{Q}}_{\ell}$-linear representations of $GL_n(\hat{\mathbb{Z}}_p)$. For every summand $\mathcal{JL}(\pi)$ in the base change of \eqref{gen hom thy 1} to $\overline{\mathbb{Q}}_{\ell}$ with $\pi$ irreducible and supercuspidal, $\pi$ appears with multiplicity at least $n$ in \eqref{gen coh thy 1}. We will refer to this relationship between summands in \eqref{gen hom thy 1} and summands in \eqref{gen coh thy 1} as ``$\ell$-adic topological Jacquet-Langlands duality,'' or ``TJL duality'' for short. We will refer to \eqref{gen coh thy 1} as the ``TJL dual'' of $X$ (with respect to $\mathbb{G}$ and $\ell$).

The author of this paper feels that topological versions of Langlands or Jacquet-Langlands correspondences only have much value if they have some bearing on computation, e.g. if they can be used to get a better understanding of homotopy groups. We therefore owe the reader some explanation of how the constructions of this paper relate to honest calculations of $\pi_*$. Consider the case where the formal group $\mathbb{G}$ is the multiplicative formal group over $\overline{\mathbb{F}}_p$. Then $E(\mathbb{G})$ agrees with $p$-complete complex $K$-theory $\hat{KU}_p$ with a primitive $(p^i-1)$th root of unity adjoined for all $i\geq 1$, so that $\pi_*E(\mathbb{G})\cong W(\overline{\mathbb{F}}_p)[u^{\pm 1}]$. By the argument given in the proof of Theorem \ref{splitting of k-thy spectrum}, the algebraic $K$-theory spectrum $\mathcal{K}(E(\mathbb{G}))^{\wedge}_{\ell}$ is weakly equivalent to the $\ell$-adic completion of $\mathcal{K}(\overline{\mathbb{F}}_p) \vee \Sigma \mathcal{K}(\overline{\mathbb{F}}_p)$, which Suslin rigidity identifies with $\widehat{ku}_{\ell}\vee \Sigma \widehat{ku}_{\ell}$. 

Consequently, for $\mathbb{G}$ multiplicative over $\overline{\mathbb{F}}_p$, the generalized homology theory \eqref{gen hom thy 1} on the homological side of TJL duality is simply the theory that sends a spectrum $X$ to $(\widehat{ku}_{\ell})_*(X) \oplus \Sigma (\widehat{ku}_{\ell})_*(X)$, two copies of the $\ell$-complete connective complex $K$-homology of $X$.
Meanwhile, it follows from Example \ref{ht 1 example 1}, together with the calculation of $H^0$ of the height $1$ Lubin-Tate tower in section 3.4 of \cite{MR1044827} (or Theorem 4.4 of \cite{MR2441699} for a more general result), that the TJL dual $H\overline{\mathbb{Q}}_{\ell}\wedge_{\hat{S}_{\ell}}JL_{\ell}(\mathbb{G})$ of $\mathcal{K}(E(\mathbb{G}))\wedge X$ splits as a wedge of countably infinitely many copies of $\Aut(\mathbb{G})\times GL_1(\hat{\mathbb{Z}}_p)$-equivariant Eilenberg-Mac Lane spectra
\[ \coprod_{\omega} \left( H((\omega^{-1})\otimes \omega)\vee \Sigma H((\omega^{-1})\otimes \omega)\right),\]
where the coproduct is taken over all the continuous characters $\omega: \hat{\mathbb{Z}}_p^{\times} \rightarrow \overline{\mathbb{Q}}_{\ell}^{\times}$ where $\overline{\mathbb{Q}}_{\ell}^{\times}$ is equipped with the discrete topology. 

Since $\Ht_p(\mathbb{G})=1$, the action of $\Aut(\mathbb{G})\cong \hat{\mathbb{Z}}_p^{\times}$ on $\mathcal{K}(E(\mathbb{G}))^{\wedge}_{\ell}$ is trivial, by Remark \ref{ht 1 aut(G) triviality}. Consequently, for $n=1$, the cohomological side \eqref{gen coh thy 1} of TJL duality reduces to 
\begin{align}
\nonumber  & \pi_*F\left(\mathcal{K}(E(\mathbb{G}))^{\wedge}_{\ell}\wedge X,\left(H\overline{\mathbb{Q}}_{\ell}\wedge_{\hat{S}_{\ell}} JL_{\ell}(\mathbb{G})\right)^{h\Aut(\mathbb{G})}\right) \\
  \nonumber &\cong \left[\mathcal{K}(E(\mathbb{G}))^{\wedge}_{\ell}\wedge X,\left(H\overline{\mathbb{Q}}_{\ell}\wedge_{\hat{S}_{\ell}} JL_{\ell}(\mathbb{G})\right)^{h\Aut(\mathbb{G})}\right] \\
  \nonumber &\cong \left[\mathcal{K}(E(\mathbb{G}))^{\wedge}_{\ell}\wedge X,\left(\left( \coprod_{\omega} \left( H((\omega^{-1})\otimes \omega)\vee \Sigma H((\omega^{-1})\otimes \omega)\right)\right)^{\coprod\mathbb{N}}\right)^{h\Aut(\mathbb{G})}\right] \\
  \label{gen coh 409} &\cong \left[\left( \hat{ku}_{\ell} \vee \Sigma\hat{ku}_{\ell}\right)\wedge X ,\left( H\overline{\mathbb{Q}}_{\ell}\vee \Sigma H\overline{\mathbb{Q}}_{\ell}\right)^{\coprod\mathbb{N}}\right] .\end{align}
To be clear, here we are using the standard notation $[A,B]$ for homotopy classes of maps from a spectrum $A$ to a spectrum $B$.

This dramatic simplification \eqref{gen coh 409} of the TJL duals in the $n=1$ case is in fact a ``feature'' and not a ``bug.'' It is the key\footnote{See observation 1, at the end of this section, for further explanation of why this simplification is in fact absolutely necessary in order for the $L$-factors to be preserved.} to proving that, when $n=1$, 
the $\ell$-adic topological Jacquet-Langlands duality {\em preserves the $L$-factors},
in a sense that we will now explain. 

Suppose that $n=1$ and that $X$ is a finite CW-complex. The ``provisional $KU$-local zeta-function'' $\dot{\zeta}_{KU}(s,X)$ of $X$ is defined in the preprint \cite{salch2023kulocal} by taking the Euler product which defines the Hasse-Weil zeta-function of a variety, replacing the variety with the CW-complex $X$, replacing the Weil cohomology theory with complex topological $K$-theory, and replacing the Frobenius operators on Weil cohomology with the Adams operations on $K$-theory. In Theorem 2.4 of \cite{salch2023kulocal} it is shown that the resulting Euler product analytically continues to a meromorphic function $\dot{\zeta}_{KU}(s,X)$ on the complex plane. Write $P$ for the set of primes consisting of $2$ and all of the primes $p$ at which the cohomology of $X$ has nontrivial $p$-torsion. 
By Theorem 2.8 of \cite{salch2023kulocal}, if the cohomology $H^*(X;\mathbb{Z}[P^{-1}])$ is concentrated in even degrees, then the special values of $\dot{\zeta}_{KU}(s,X)$ in a left-hand half-plane recover the orders of the $KU$-local stable homotopy groups $\pi_*L_{KU}DX$ of the Spanier-Whitehead dual $DX$ of $X$, in the following sense. More precisely: write $b$ for the greatest integer $i$ such that $H^{2i}(X;\mathbb{Q})$ is nontrivial. Then 
\begin{align*}
 \left| \pi_{-2k-1}(L_{KU}DX)\right|
  &= \denom\left( \dot{\zeta}_{KU}(1-k,X)\right)
\end{align*}
for all $k \geq b+1$,
up to powers of irregular primes\footnote{At irregular primes, the order of $\pi_{-2k-1}(L_{KU}DX)$ is still describable in terms of special values of $\dot{\zeta}_{KU}(s,X)$, but a more complicated statement is required, given in terms of the ``isoweight factors'' of $\dot{\zeta}_{KU}(s,X)$. See Theorem 2.8 of \cite{salch2023kulocal} for the precise statement.} and primes in $P$. 

Now $\pi_*\left( F(\mathcal{K}(E(\mathbb{G}))^{\wedge}_{\ell}\wedge X,H\overline{\mathbb{Q}}_{\ell}\wedge_{\hat{S}_{\ell}} JL_{\ell}(\mathbb{G}))^{h\Aut(\mathbb{G})}\right)$ is a free graded module over the graded ring \begin{align}
 \nonumber &\pi_*\left( F(\mathcal{K}(E(\mathbb{G}))^{\wedge}_{\ell},H\overline{\mathbb{Q}}_{\ell}\wedge_{\hat{S}_{\ell}} JL_{\ell}(\mathbb{G}))^{h\Aut(\mathbb{G})}\right)\ \ \ \ \ \ \  \\
\label{gr gl ring 1}  &\ \ \ \cong \pi_*\left( \hat{ku}_{\ell},\left( H\overline{\mathbb{Q}}_{\ell}\vee \Sigma H\overline{\mathbb{Q}}_{\ell}\right)^{\coprod\mathbb{N}}\right) \oplus \Sigma^{-1}\pi_*\left( \hat{ku}_{\ell},\left( H\overline{\mathbb{Q}}_{\ell}\vee \Sigma H\overline{\mathbb{Q}}_{\ell}\right)^{\coprod\mathbb{N}}\right) ,\end{align}
which is trivial in degrees $>1$, by the connectivity of $ku$. 

The ring \eqref{gr gl ring 1} is a product of infinitely many copies of the same (trivial) representation of $GL_1(\hat{\mathbb{Z}}_p)$. The large number of ``redundant'' factors is the price we have paid for using the degenerating Lubin-Tate tower rather than the classical Lubin-Tate tower (see Example \ref{ht 1 example 1}), which was necessary in order to get the topological realizations $\mathcal{O}^{\topsym}_{\lvl p^1}, \mathcal{O}^{\topsym}_{\lvl p^2}, \dots$ in \cref{Topological realizability...}. 

We can pare down the redundant factors as follows.
Choose a homogeneous basis $\mathcal{B}$ for \[\pi_*\left( F(\mathcal{K}(E(\mathbb{G}))^{\wedge}_{\ell}\wedge X,H\overline{\mathbb{Q}}_{\ell}\wedge_{\hat{S}_{\ell}} JL_{\ell}(\mathbb{G}))^{h\Aut(\mathbb{G})}\right)\] over the graded ring \eqref{gr gl ring 1}. For each homogeneous generator $x\in\mathcal{B}$, the submodule of $\pi_*\left( F(\mathcal{K}(E(\mathbb{G}))^{\wedge}_{\ell}\wedge X,H\overline{\mathbb{Q}}_{\ell}\wedge_{\hat{S}_{\ell}} JL_{\ell}(\mathbb{G}))^{h\Aut(\mathbb{G})}\right)$ generated by $x$ is again a product of countably infinitely many copies of the same representation $\rho^{\mathbb{G}}_{x}$ of $GL_1(\hat{\mathbb{Z}}_p)$. Assume that $p\notin P$. Then, since $X$ was assumed to have rational cohomology concentrated in even degrees, the homogeneous generator $x$ is in degree $2w_x$ for some integer $w_x$. Let $\rho^{\mathbb{G}}_x(w_x)$ be the $w_x$th Tate twist of the $GL_1(\hat{\mathbb{Z}}_p)$-representation $\rho^{\mathbb{G}}_x$. Write $L(s,\rho)$ for the $L$-factor associated to a representation $\rho$ of $GL_1(\hat{\mathbb{Z}}_p)$ (cf. \cite{MR2808915}). By the {\em automorphic $L$-factor associated to $\mathbb{G}$ and $X$} we mean the product
\begin{equation}\label{automorphic l-factor}
 \prod_{x\in \mathcal{B}} L(s, \rho^{\mathbb{G}}_x(w_x)).
\end{equation}

Now, given the work already done in this paper as well as in \cite{salch2023kulocal}, it is easy to prove that TJL duality preserves the $L$-factors, as follows:
\begin{theorem}\label{main automorphic thm}
Let $X$ be a finite CW-complex. Suppose that $H^*(X;\mathbb{Q})$ is concentrated in even degrees. Let $p$ be a prime such that $H^*(X;\mathbb{Z})$ has no nontrivial $p$-torsion. Let $\mathbb{G}$ be a height $1$ formal group law over $\overline{\mathbb{F}}_p$. Then the automorphic $L$-factor \eqref{automorphic l-factor} associated to $\mathbb{G}$ and $X$ is equal to the $p$-local Euler factor in the provisional $KU$-local zeta-function $\dot{\zeta}_{KU}(s,X)$.
\end{theorem}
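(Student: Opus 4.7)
The plan is to compute both sides of the desired identity explicitly and match them term-by-term: both turn out to be products of factors $(1-p^{w-s})^{-1}$ indexed by generators of $H^{\mathrm{even}}(X;\mathbb{Z})$, with the weights $w$ determined by the cohomological degree. The strategy is viable because at height $1$ everything simplifies drastically, per the discussion leading up to \eqref{gen coh 409}.

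First I would unpack the automorphic side. By Remark \ref{ht 1 aut(G) triviality}, the $\Aut(\mathbb{G})$-action on $\mathcal{K}(E(\mathbb{G}))^{\wedge}_{\ell}\simeq \hat{ku}_{\ell}\vee\Sigma\hat{ku}_{\ell}$ is trivial, and by Example \ref{ht 1 example 1} together with the height-$1$ computation of $H^{0}$ of the Lubin--Tate tower, $H\overline{\mathbb{Q}}_{\ell}\wedge_{\hat{S}_{\ell}}JL_{\ell}(\mathbb{G})$ decomposes as a countably infinite wedge of Eilenberg--MacLane spectra indexed by continuous characters $\omega:\hat{\mathbb{Z}}_p^{\times}\to \overline{\mathbb{Q}}_{\ell}^{\times}$, on which $\Aut(\mathbb{G})$ acts via $\omega^{-1}$ and $GL_{1}(\hat{\mathbb{Z}}_{p})$ acts via $\omega$. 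Taking continuous $\Aut(\mathbb{G})$-fixed points, only the $\omega=1$ summands contribute (since the continuous cohomology of the pro-finite group $\hat{\mathbb{Z}}_p^{\times}$ with nontrivial $\overline{\mathbb{Q}}_{\ell}$-coefficients vanishes), so the residual $GL_{1}(\hat{\mathbb{Z}}_{p})$-representation on each surviving slot is the trivial character. Meanwhile, the hypotheses on $X$ make the Atiyah--Hirzebruch spectral sequence for $(\hat{ku}_{\ell})_{\ast}(X)$ collapse, so $\overline{\mathbb{Q}}_{\ell}\otimes_{\hat{\mathbb{Z}}_{\ell}}\pi_{\ast}\bigl(\hat{ku}_{\ell}\wedge X\bigr)$ is free over $\overline{\mathbb{Q}}_{\ell}[u]$ on generators in bijection with a homogeneous $\overline{\mathbb{Q}}_{\ell}$-basis of $H^{\mathrm{even}}(X;\overline{\mathbb{Q}}_{\ell})$, with degree-grading equal to cohomological degree. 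A homogeneous basis $\mathcal{B}$ of the TJL dual as a module over the ring \eqref{gr gl ring 1} is then identified with this same set, and for a basis element $x_{\alpha}$ coming from a class in $H^{2w_{\alpha}}(X;\mathbb{Z})$ we have $w_{x_{\alpha}}=w_{\alpha}$ and $\rho^{\mathbb{G}}_{x_{\alpha}}$ is the trivial character of $GL_{1}(\hat{\mathbb{Z}}_{p})$. Consequently $L(s,\rho^{\mathbb{G}}_{x_{\alpha}}(w_{\alpha}))=(1-p^{w_{\alpha}-s})^{-1}$ (this is the content of the footnote following the statement of Theorem \ref{main automorphic thm}), and \eqref{automorphic l-factor} equals $\prod_{\alpha}(1-p^{w_{\alpha}-s})^{-1}$.

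Second I would quote the matching calculation on the $KU$-local side. The $p$-local Euler factor of $\dot{\zeta}_{KU}(s,X)$, as defined in \cite{salch2023kulocal}, is built from the characteristic polynomial of $\psi^{p}$ acting on $K^{\ast}(X)\otimes \mathbb{Q}$; under the present hypotheses, the Atiyah--Hirzebruch filtration splits rationally and $\psi^{p}$ acts on the filtered piece corresponding to $H^{2w}(X;\mathbb{Z})$ by $p^{w}$, giving $\prod_{\alpha}(1-p^{w_{\alpha}-s})^{-1}$. Matching with the previous paragraph proves the theorem. The main obstacle is the bookkeeping in the first step: one must verify that the two $\vee\Sigma$ doublings (on the source, from $\mathcal{K}(E(\mathbb{G}))^{\wedge}_{\ell}\simeq \hat{ku}_{\ell}\vee\Sigma\hat{ku}_{\ell}$, and on the target, from each character summand of $JL_{\ell}(\mathbb{G})$) combine with the countably infinite multiplicity of each character and are fully absorbed into the coefficient ring \eqref{gr gl ring 1}, so that choosing $\mathcal{B}$ as free-module generators over that ring exactly eliminates every redundancy, leaves each $H^{2w}(X;\mathbb{Z})$-generator contributing precisely one $(1-p^{w-s})^{-1}$, and introduces no spurious extra $L$-factors.
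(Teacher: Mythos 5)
Your argument is correct and follows essentially the same route as the paper's own (very terse) proof: both rest on Remark \ref{ht 1 aut(G) triviality}, the reduction \eqref{gen coh 409}, and the identification of the $p$-local Euler factor from Definition 2.5 of \cite{salch2023kulocal}, arriving at $\prod_{w}(1-p^{w-s})^{-\beta_{2w}}$ on both sides. You have merely made explicit some steps (the character-by-character analysis of the $\Aut(\mathbb{G})$-homotopy-fixed points and the Atiyah--Hirzebruch collapse identifying the basis $\mathcal{B}$ with a homogeneous basis of $H^{\mathrm{even}}(X;\mathbb{Q})$) that the paper relegates to the surrounding discussion preceding the theorem.
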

\begin{proof}
Since $\Aut(\mathbb{G})$ acts trivially on $\mathcal{K}(E(\mathbb{G}))^{\wedge}_{\ell}$ by Remark \ref{ht 1 aut(G) triviality}, the automorphic $L$-factor \eqref{automorphic l-factor} is simply the product
\begin{equation}
 \prod_{w\in\mathbb{Z}} (1-p^{w-s})^{-\beta_{2w}},
\end{equation}
where $\beta_{2w} = \dim_{\mathbb{Q}}H^{2w}(X;\mathbb{Q})$ is the $(2w)$th Betti number of $X$. The latter is the $p$-local Euler factor of $\dot{\zeta}_{KU}(s,X)$: see Definition 2.5 of \cite{salch2023kulocal}.
\end{proof}

The author would like to again stress that, if TJL duality actually {\em matters}, it matters because it can help to understand and calculate homotopy groups of finite spectra. Based on the Adams-Baird-Ravenel calculation of $\pi_*L_{KU}S^0$ in terms of special values of the Riemann zeta-function, and the generalizations in \cite{rmjpaper} and in \cite{salch2023kulocal}, the hope is that we ought to be able to use Langlands-like dualities to associate Galois representations or automorphic representations to a finite spectrum $X$ in such a way that special values of the $L$-functions of the representations recover the orders of the (Bousfield-localization) stable homotopy groups of $X$. 

Consider what we get from TJL duality: we do not get a full-fledged $L$-function from a finite spectrum $X$, but only an $L$-factor. The deep and mysterious behavior of special values of complex-analytic $L$-functions, e.g. the Lichtenbaum conjecture \cite{MR0406981} or Deligne's conjecture \cite{MR0546622} on critical values of $L$-functions, involve analytically continuing a well-understood function beyond its natural domain, and evaluating that analytic continuation {\em outside} the original domain. Trying to do this with only a single $L$-{\em factor} does not yield much of interest. One needs global data, $L$-factors at infinitely many primes, to get Lichtenbaum-conjecture-like phenomena for special values in the left half-plane.

Hence we are motivated to make a global statement. Let $\mathbb{G}$ denote a formal group law defined over $\mathbb{Z}$, or perhaps $\mathbb{Z}$ with finitely many primes inverted. For each prime $p$, write $\mathbb{G}\otimes_{\mathbb{Z}} \overline{\mathbb{F}}_p$ for $\mathbb{G}$ base-changed to $\overline{\mathbb{F}}_p$. For each prime $p$, suppose we have decided on a way to pare down the many redundant factors of the TJL dual $GL_n(\hat{\mathbb{Z}}_p)$-representation of $\left(\mathcal{K}(E(\mathbb{G}))^{\wedge}_{\ell}\right)_*(X)$; we have already explained how to do this in the height $1$ case. To that $\ell$-adic $GL_n(\hat{\mathbb{Z}}_p)$-representation, fix an associated automorphic $p$-local $L$-factor. By taking a product over all primes $p$---i.e., an Euler product of the $p$-local $L$-factors---we get a full-fledged $L$-{\em function} associated to $X$. Call this the {\em automorphic Euler product associated to $\mathbb{G}$ and $X$}.

If the automorphic Euler product associated to $\mathbb{G}$ and $X$ analytically continues to a meromorphic function on the complex plane, we will call that analytic continuation the {\em automorphic $L$-function associated to $\mathbb{G}$ and $X$}, and we write $L_{\mathbb{G}}(s,X)$ for it.

Now suppose that $\mathbb{G}$ is the formal group law of some complex oriented cohomology theory $E^*$. 
\begin{question}\label{main automorphic question}
For some reasonable class of finite CW-complexes $X$:
\begin{enumerate}
\item Does the automorphic Euler product associated to $\mathbb{G}$ and $X$ analytically continue? That is, does the automorphic $L$-function $L_{\mathbb{G}}(s,X)$ associated to $\mathbb{G}$ and $X$ exist?
\item Suppose that $L_{\mathbb{G}}(s,X)$ exists. Are the orders of the Bousfield $E$-local stable homotopy groups of $X$, $\pi_*(L_EX)$, describable in terms of special values of $L_{\mathbb{G}}(s,X)$?
\end{enumerate}
\end{question}
As a corollary of Theorem \ref{main automorphic thm}, in the height $1$ case, the Euler product of the automorphic $L$-factors associated to $\mathbb{G}$ and $X$ is simply the Euler product of the provisional $KU$-local zeta-function of $X$. Hence, as a corollary of the results of section 2 of \cite{salch2023kulocal}, we get a positive answer to both parts of Question \ref{main automorphic question} in the case where $E^*$ is complex $K$-theory:
\begin{theorem}\label{main automorphic cor}
Let $\mathbb{G}$ be the formal group law of complex $K$-theory, i.e., $\mathbb{G}$ is the multiplicative formal group law over $\mathbb{Z}$.
Let $X$ be a finite CW-complex with torsion-free cohomology, and whose cohomology is concentrated in even degrees\footnote{A few familiar examples: $X$ could be taken to be a sphere of even dimension, or a complex projective space of any dimension.}. Write $b$ for the greatest integer $i$ such that $H^{2i}(X;\mathbb{Q})$ is nontrivial.
Then the automorphic $L$-function $L_{\mathbb{G}}(s,X)$ associated to $\mathbb{G}$ and $X$ exists. Furthermore, we have an equality
\begin{align*}
 \left| \pi_{-2k-1}(L_{KU}DX)\right|
  &= \denom L_{\mathbb{G}}(1-k,X)
\end{align*}
up to powers of $2$ and powers of irregular primes, for all integers $k \geq b+1$.
\end{theorem}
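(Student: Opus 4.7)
The plan is to deduce Theorem \ref{main automorphic cor} from Theorem \ref{main automorphic thm} by assembling the local $L$-factor identifications into a global Euler product, and then transferring the analytic continuation and special value theorems for $\dot{\zeta}_{KU}(s,X)$ proved in \cite{salch2023kulocal} over to $L_{\mathbb{G}}(s,X)$. First, for each odd prime $p$, let $\mathbb{G}_p := \mathbb{G}\otimes_{\mathbb{Z}} \overline{\mathbb{F}}_p$, which is a height $1$ formal group over $\overline{\mathbb{F}}_p$ because $\mathbb{G}$ is the multiplicative formal group. Since $H^*(X;\mathbb{Z})$ is torsion-free by hypothesis, the ``bad'' set $P$ of the preceding discussion reduces to $\{2\}$, so for every odd prime $p$ the hypotheses of Theorem \ref{main automorphic thm} are satisfied: $H^*(X;\mathbb{Q})$ is concentrated in even degrees, and $H^*(X;\mathbb{Z})$ has no $p$-torsion.

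Applying Theorem \ref{main automorphic thm} prime-by-prime, the automorphic $L$-factor associated to $\mathbb{G}_p$ and $X$ coincides with the $p$-local Euler factor in the provisional $KU$-local zeta-function $\dot{\zeta}_{KU}(s,X)$ for every odd prime $p$. Taking the Euler product over all primes, the automorphic Euler product associated to $\mathbb{G}$ and $X$ agrees with $\dot{\zeta}_{KU}(s,X)$ at every odd prime, so the ratio of the two Euler products is the $2$-local Euler factor of $\dot{\zeta}_{KU}(s,X)$ divided by the automorphic $L$-factor at $p=2$. This is a finite product of factors of the shape $(1-2^{w-s})^{\pm \beta_{2w}}$ for integers $w$, hence a meromorphic function on $\mathbb{C}$. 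By Theorem 2.4 of \cite{salch2023kulocal}, $\dot{\zeta}_{KU}(s,X)$ admits a meromorphic continuation to the complex plane; dividing by the (meromorphic) discrepancy at $p=2$ shows that the automorphic Euler product for $\mathbb{G}$ and $X$ also continues meromorphically, establishing existence of $L_{\mathbb{G}}(s,X)$.

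For the special value statement, apply Theorem 2.8 of \cite{salch2023kulocal}: under the hypotheses that $H^*(X;\mathbb{Z})$ is torsion-free and $H^*(X;\mathbb{Q})$ is concentrated in even degrees, the equality
\[ \left| \pi_{-2k-1}(L_{KU}DX)\right| = \denom\!\left(\dot{\zeta}_{KU}(1-k,X)\right) \]
holds up to powers of $2$ and powers of irregular primes, for all $k \geq b+1$. Since $L_{\mathbb{G}}(s,X)$ and $\dot{\zeta}_{KU}(s,X)$ differ only in their Euler factor at $p=2$, the denominators of their special values at integer points $s=1-k$ differ only by a power of $2$, which is absorbed into the allowed ambiguity. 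Substituting gives the desired identity.

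The main obstacle is already dealt with: it is Theorem \ref{main automorphic thm}, where the local matching of the TJL-dual $L$-factor with the Adams-operation Euler factor of $\dot{\zeta}_{KU}$ is established, and the special value machinery is imported wholesale from \cite{salch2023kulocal}. What remains for this corollary is essentially bookkeeping: checking that the prime $2$ is the only local place where the two Euler products can disagree, and confirming that the permitted ambiguity (powers of $2$, powers of irregular primes) in the special value theorem of \cite{salch2023kulocal} is wide enough to absorb the single ``missing'' Euler factor.
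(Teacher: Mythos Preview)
Your overall strategy---apply Theorem \ref{main automorphic thm} prime by prime, take the Euler product, and invoke \cite{salch2023kulocal}---is exactly the paper's approach. But you introduce an unnecessary complication at $p=2$ that creates a real gap. The hypothesis of Theorem \ref{main automorphic thm} is only that $H^*(X;\mathbb{Z})$ has no $p$-torsion; there is no oddness restriction on $p$. Since $H^*(X;\mathbb{Z})$ is assumed torsion-free, this holds for \emph{every} prime, including $p=2$. (You seem to have conflated this hypothesis with the set $P$ appearing in the discussion of \cite{salch2023kulocal}, which contains $2$ for reasons internal to that paper's special-value theorem, not because Theorem \ref{main automorphic thm} fails there.) Hence the automorphic Euler product equals $\dot{\zeta}_{KU}(s,X)$ on the nose, at every prime, and both existence and the special-value formula follow directly from Theorems 2.4 and 2.8 of \cite{salch2023kulocal}. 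This is precisely how the paper argues.

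Your workaround for the imagined discrepancy is where the argument actually breaks. You claim that if $L_{\mathbb{G}}(s,X)$ and $\dot{\zeta}_{KU}(s,X)$ differ only in their $2$-local Euler factors, then ``the denominators of their special values at integer points $s=1-k$ differ only by a power of $2$.'' But a ratio of such Euler factors, evaluated at an integer $s=1-k$, is a product of terms $(1-2^{a})^{\pm 1}$ with $a>0$, and each $1-2^a$ is an \emph{odd} integer ($-3,-7,-15,-31,\dots$). Multiplying or dividing a rational number by such a factor can alter its denominator by an arbitrary odd prime, which is not absorbed by the allowed ambiguity ``powers of $2$ and powers of irregular primes.'' The fix is simply to note that Theorem \ref{main automorphic thm} already applies at $p=2$, so there is no discrepancy to manage.
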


The author hopes that a positive answer to Question \ref{main automorphic question} can be obtained for some formal groups $\mathbb{G}$ over $\mathbb{Z}$, or perhaps over $\mathbb{Z}[P^{-1}]$ for some finite set of primes $P$, such that $\mathbb{G}$ has height $h>1$ at some primes. The next case of Question \ref{main automorphic question} to consider is that of a Landweber exact elliptic curve $C$ over $\mathbb{Z}[\Delta_C^{-1}]$, since any such elliptic curve has an associated elliptic cohomology theory whose underlying formal group law agrees with that of $C$. Hence the formal group will have $p$-height $2$ at the supersingular primes $p$ of $C$. The author is optimistic that this can be made to work for an elliptic curve $C$ with complex multiplication by a quadratic imaginary number field $F$ of class number $1$, based on some computer calculations of the critical values of twists of the Hecke $L$-function of the Grossencharacter of $C$. The author also has some preliminary calculations, not using elliptic curves or Jacquet-Langlands correspondences, at heights $2$ and $4$ using the ``height-shifting'' method of \cite{height4}, \cite{formalmodules4}, and \cite{cmah7} to compare abelian $L$-values over quadratic and quartic fields to orders of $K(2)$-local and $K(4)$-local stable homotopy groups of finite spectra. These height $>1$ calculations go far beyond the scope of the present paper, though, and will have to be described elsewhere. 

We finish with two observations about \eqref{gen coh 409}, each of which leads to natural questions that go well beyond the scope of this paper:
\begin{enumerate}
\item 
The action of $GL_1(\hat{\mathbb{Z}}_p)$ on \eqref{gen coh 409} is trivial. This made much of the work in this section of the paper very easy in the case $n=1$: there are never any nontrivial representations of $\Aut(\mathbb{G})$ on the homological side \eqref{gen hom thy 1} of the duality, or nontrivial representations of $GL_1(\hat{\mathbb{Z}}_p)$ on the cohomological side \eqref{gen coh thy 1} of the duality. 

To be clear, triviality of the $GL_1(\hat{\mathbb{Z}}_p)$-action on \eqref{gen coh 409} is indeed absolutely necessary for the height $1$ automorphic $L$-functions of finite CW-complexes to correctly describe the $KU$-local stable homotopy groups of those complexes. This is because the TJL dual \eqref{gen coh thy 1} of $(\mathcal{K}(E(\mathbb{G})^{\wedge}_{\ell})_*(X)$ is determined by the action of the group of Adams operations $\hat{\mathbb{Z}}_p^{\times}\cong \Aut(\mathbb{G})$ on a {\em rational} spectrum associated to $X$. But the action of Adams operations is well-known to be rationally diagonalizable, i.e., the associated automorphic $L$-factors {\em must} be products of Tate twists of $L$-factors of trivial $GL_1(\hat{\mathbb{Z}}_p)$-representations.

To see a duality in which more interesting representations of $GL_n(\hat{\mathbb{Z}}_p)$ arise, one can of course look to heights $n>1$, where the $\Aut(\mathbb{G})$ action on \eqref{gen hom thy 1} will generally be nontrivial, as explained at the end of Remark \ref{ht 1 aut(G) triviality}. But it is still not clear whether {\em supercuspidal} representations of $GL_n(\hat{\mathbb{Z}}_p)$ ever occur in \eqref{gen coh thy 1}. This ought to be decidable by explicit calculation in the height $n=2$ case, but not by any calculation which is short enough to reasonably include in this paper. The concern is simply that relatively few representations of $\Aut(\mathbb{G})$ can be realized in the form $\left(\mathcal{K}(E)^{\wedge}_{\ell}\right)_*(X)$ for $E$ an $E_{\infty}$-ring spectrum with an action of $\Aut(\mathbb{G})$. 

The right thing to do is probably to just {\em get rid of the algebraic $K$-theory} on the homological side \eqref{gen hom thy 1} of TJL duality. Rather than considering the action of $\Aut(\mathbb{G})$ on $\left(\mathcal{K}(E(\mathbb{G}))^{\wedge}_{\ell}\right)_*(X)$, we should simply consider the action of $\Aut(\mathbb{G})$ on $E(\mathbb{G})_*(X)$, since the cohomology of that action is the input for the descent spectral sequence converging to a small extension of $\pi_*(L_{K(n)}X)$; see \cite{MR2030586} for this well-known spectral sequence, which is of great importance in calculations. 

However, algebraic $K$-theory appeared in \eqref{gen hom thy 1} because we needed to get an {\em $\ell$-adic representation} of $\Aut(\mathbb{G})$ for $\ell\neq p$, rather than a {\em $p$-adic representation} of $\Aut(\mathbb{G})$, in order to use the existing results on the $\ell$-adic Jacquet-Langlands correspondence. It is entirely reasonable to ask for a $p$-adic (rather than $\ell$-adic) topological Jacquet-Langlands duality, which relates certain $p$-adic representations of $\Aut(\mathbb{G})$ occurring in $E(\mathbb{G})_*(X)$ to $p$-adic representations of $GL_n(\hat{\mathbb{Z}}_p)$ occurring in the homotopy groups of an appropriate dual of $E(\mathbb{G})_*(X)$ along the lines of \eqref{gen hom thy 1}. This will be more difficult than the $\ell$-adic case, probably involving recent progress toward a $p$-adic Jacquet-Langlands correspondence (e.g. as in \cite{MR3861564} and \cite{DospinescuPaskunasSchraen+2023+57+114}), and it is a subject for another paper.
\item The cohomological side of TJL duality, \eqref{gen coh 409}, is a {\em rational} invariant of the $\ell$-adic homotopy type of $X$. For example, given finite CW-complexes $X_1$ and $X_2$, \eqref{gen coh 409} can distinguish $X_1$ from $X_2$ if and only if Betti numbers can distinguish $X_1$ from $X_2$.


This insensitivity of \eqref{gen coh 409} is an artifact of the fact that we have taken a smash product with $H\overline{\mathbb{Q}}_{\ell}$ in \eqref{gen coh thy 1}. That smash product was required in order to get a $\overline{\mathbb{Q}}_{\ell}$-linear representation of $GL_n(\hat{\mathbb{Z}}_p)$, so we could use known results on $\ell$-adic Jacquet-Langlands correspondence. But of course we could have left out that smash product, and simply considered
\begin{align*}\label{gen coh thy 2} \pi_*\left( F(\mathcal{K}(E)^{\wedge}_{\ell}\wedge X,JL_{\ell}(\mathbb{G}))^{h\Aut(\mathbb{G})}\right),\end{align*}
a $\hat{\mathbb{Z}}_{\ell}$-linear representation of $GL_n(\hat{\mathbb{Z}}_p)$. This is probably a much richer invariant of $X$ than the cohomology theory \eqref{gen coh 409}, and worth some further investigation---but not in this paper, which is already too long.
\end{enumerate}

\appendix
\section{Topological realizability and the LT tower (by A. Salch and M. Strauch)}
\subsection{Review of the Lubin-Tate tower.}
\label{Review of the Lubin-Tate tower.}

Given a commutative ring $A$ and a commutative $A$-algebra $R$, recall that a {\em formal $A$-module over $R$} is a formal group law $F(X,Y)\in R[[X,Y]]$ over $R$ together with a ring map $\rho: A\rightarrow \End(F)$ such that $\rho(a)(X) \equiv aX$ modulo $X^2$ for all $a\in A$. For any prime $p$, formal $\hat{\mathbb{Z}}_p$-modules are equivalent to formal group laws over commutative $\hat{\mathbb{Z}}_p$-algebras. Consequently the case $A = \mathbb{\hat{Z}}_p$ is the base case, and the most relevant case for the topological applications which begin in \cref{Topological realizability...}. Nevertheless for the algebraic applications of the Lubin-Tate tower, cases of $A$ other than $A = \hat{\mathbb{Z}}_p$ are also important: for example, to realize the $\ell$-adic Jacquet-Langlands correspondence for a $p$-adic number field $F$, one uses the Lubin-Tate tower of a formal $\mathcal{O}_F$-module. We will present the ideas for general $A$ (satisfying the constraints of Conventions \ref{main conventions}, i.e., $A$ is the ring of integers in a finite field extension of $\mathbb{Q}_p$) until it becomes necessary to restrict to the case $A = \hat{\mathbb{Z}}_p$. 

One can define the $A$-height of a formal $A$-module in an intrinsic way (see \cite{MR2987372} for a textbook account), but the most relevant facts about $A$-height are as follows:
\begin{itemize}
\item The $A$-height of a formal $A$-module is either $\infty$ or a nonnegative integer. It is zero if and only if $\varpi$ is a unit in $R$.
\item Suppose $\mathbb{F},\mathbb{G}$ are formal $A$-modules defined over a separably closed field extension of the residue field of $A$. Then $\mathbb{F}$ and $\mathbb{G}$ are isomorphic if and only if their $A$-heights are equal.
\item Write $\Frac(A)$ for the fraction field of $A$. Then the $A$-height of a formal $A$-module $\mathbb{G}$ is equal to the $p$-height of the underlying formal group law of $\mathbb{G}$, divided by the degree $[\Frac(A): \mathbb{Q}_p]$ of the field extension $\Frac(A)/\mathbb{Q}_p$.
\end{itemize}

Given a formal $A$-module $F$ over $R$ and an element $a\in A$, the power series $\rho(a)(X)\in R[[X]]$ is often called the {\em $a$-series} of $F$, and written $[a]_F(X)$. The case in which $a=\varpi$, a uniformizer for $A$, plays a special role in Definition \ref{def of drinfeld level struct}. This definition originated with \cite{MR0384707}:
\begin{definition}\label{def of drinfeld level struct} 
Let $m$ be a nonnegative integer. 
Let $\mathbb{G}$ be a formal $A$-module, of $A$-height $n<\infty$, over a complete local commutative $A$-algebra $B$ with maximal ideal $\mathfrak{m}_B$. 
By a {\em Drinfeld level $\varpi^m$-structure on $\mathbb{G}$} we mean a 
homomorphism 
$\phi: \left( \varpi^{-m}A/A\right)^n \rightarrow \mathbb{G}(\mathfrak{m}_B)$
of $A$-modules such that
the $\varpi$-series $[\varpi]_{\mathbb{G}}(X)\in B[[X]]$ of $\mathbb{G}$
is divisible by the product
\[ \prod_{a\in \left( \varpi^{-1}A/A\right)^n} \left( X - \phi(a)\right) .\]
\end{definition}
Suppose now that $\mathbb{G}$ is a formal $A$-module over a separable field extension $k$ of the residue field of $A$. Let $R$ be a complete local Noetherian commutative ring with residue field $k$. A {\em deformation of $\mathbb{G}$ to $R$} is a pair $(\mathbb{F},\psi)$, where $\mathbb{F}$ is a formal $A$-module over $R$, and $\psi$ is an isomorphism of formal $A$-modules $\mathbb{F}\otimes_R k \stackrel{\cong}{\longrightarrow} \mathbb{G}$. There exists an affine formal scheme $\Def(\mathbb{G})$ such that the set of {\em local} ring homomorphisms $\Gamma(\Def(\mathbb{G})) \rightarrow R$ is in natural bijection with the set of isomorphism classes of deformations of $\mathbb{G}$ to $R$. The formal affine scheme $\Def(\mathbb{G})$ is non-canonically isomorphic to $\Spf W(k)[[u_1, \dots ,u_{n-1}]]$, where $n$ is the $A$-height of $\mathbb{G}$.

More generally, a {\em deformation of $\mathbb{G}$ to $R$ with Drinfeld level $\varpi^m$-structure} is a triple $(\mathbb{F},\psi,\phi)$, where $(\mathbb{F},\psi)$ is a deformation of $\mathbb{G}$ to $R$ and where $\phi$ is a Drinfeld level $\varpi^m$-structure on $\mathbb{F}$. There again exists an affine formal scheme $\Def(\mathbb{G})_{\lvl\varpi^{m}}$ such that the set of local ring homomorphisms $\Gamma(\Def(\mathbb{G}))_{\lvl\varpi^m} \rightarrow R$ is in natural bijection with the set of isomorphism classes of deformations of $\mathbb{G}$ to $R$ with Drinfeld level $\varpi^m$-structure. This is a consequence of Theorem 4.3 of \cite{MR0384707}.

Here are some well-known properties of $\Def(\mathbb{G})_{\lvl\varpi^m}$ established in \cite{MR0384707}:
\begin{itemize}
\item In the case $m=0$, we have $\Def(\mathbb{G})_{\lvl\varpi^0} = \Def(\mathbb{G})$.
\item Composing the natural inclusion $\left( \varpi^{-(m-1)}A/A\right)^n \hookrightarrow \left( \varpi^{-m}A/A\right)^n$ with a Drinfeld level $\varpi^m$-structure $\phi: \left( \varpi^{-m}A/A\right)^n \rightarrow \tilde{\mathbb{G}}(\mathfrak{m}_B)$ on a deformation $\tilde{\mathbb{G}}$ of $\mathbb{G}$, we get a Drinfeld level $\varpi^{m-1}$-structure on $\tilde{\mathbb{G}}$. By naturality and the Yoneda lemma, we get a map of formal schemes 
\begin{align}\label{map 2320}\Def(\mathbb{G})_{\lvl\varpi^m}&\rightarrow\Def(\mathbb{G})_{\lvl\varpi^{m-1}}.\end{align} This map is flat, but not \'{e}tale. It is \'{e}tale after inverting $\varpi$, and the resulting ring map $\varpi^{-1} \Gamma(\Def(\mathbb{G})_{\lvl\varpi^{0}})\rightarrow \varpi^{-1} \Gamma(\Def(\mathbb{G})_{\lvl\varpi^{m}})$ is a Galois cover with Galois group $GL_h(A/\varpi^m)$. 
\item The resulting sequence of formal schemes
\begin{equation}\label{lt tower} \dots 
 \rightarrow \Def(\mathbb{G})_{\lvl\varpi^2} 
 \rightarrow \Def(\mathbb{G})_{\lvl\varpi^1} 
 \rightarrow \Def(\mathbb{G})_{\lvl\varpi^0} \end{equation}
is called the {\em Lubin-Tate tower} of $\mathbb{G}$.
\end{itemize}

Now suppose that $A = \hat{\mathbb{Z}}_p$.
From Goerss--Hopkins \cite{MR2125040} 
it is known that there exists a presheaf $\mathcal{O}^{\topsym}$ of $E_{\infty}$-ring spectra on the small \'{e}tale site $\Def(\mathbb{G})_{\et}$ satisfying the following properties:
\begin{itemize}
\item For each \'{e}tale open $U$ of $\Def(\mathbb{G})$, we have an isomorphism of graded rings $\pi_*\mathcal{O}^{\topsym}(U) \cong \Gamma(U)[u^{\pm 1}]$, with $u$ in degree $2$ and $\Gamma(U)$ concentrated in degree $0$. In particular, the presheaf of commutative rings $\pi_0\circ\mathcal{O}^{\topsym}$ coincides with the structure sheaf on $\Def(\mathbb{G})_{\et}$.
\item For each \'{e}tale open $U = \Def(\mathbb{G}) \times_{\Spf W(k)} \Spf W(F)$ with $F$ a separable field extension of $k$, the ring spectrum $\mathcal{O}^{\topsym}(U)$ is complex oriented, and the formal group law on the ring $\pi_0\mathcal{O}^{\topsym}(U)$ arising from the complex orientation of $\mathcal{O}^{\topsym}(U)$ is equal to the universal formal group law on $\Gamma(U)$. See \cite{MR1324104} or \cite{coctalos} for good expository accounts of complex oriented cohomology theories and their associated formal group laws.
\end{itemize}

Even before the publication of \cite{MR2125040}, the height $1$ case was in some ways well-known: if $\mathbb{G}$ is the multiplicative formal group law, then $\Gamma(\mathcal{O}^{\topsym})$ is the $p$-complete periodic complex $K$-theory spectrum, $\hat{KU}_p$, which was known to be an $E_{\infty}$-ring spectrum much earlier (e.g. in \cite{MR0494077}). The paper \cite{MR1718085} established that there cannot exist an $E_{\infty}$ $\hat{KU}_p$-algebra $B$ such that $\pi_0(B)$ is isomorphic to $\Gamma(\Def(\mathbb{G})_{\lvl p^m})$ as a $\Gamma(\Def(\mathbb{G})_{\lvl p^0})$-algebra, except in the base case $m=0$. That is, even in the height $1$ case, the sheaf of $E_{\infty}$-rings $\mathcal{O}^{\topsym}$ on $\Def(\mathbb{G})_{\et}$ cannot lift to a sheaf of $E_{\infty}$-rings on any of the higher stages in the Lubin-Tate tower \eqref{lt tower}. In that sense, the Lubin-Tate tower is not ``topologically realizable.''
The same phenomenon for heights $>1$ is demonstrated in \cite{MR4099803}.

\subsection{The degenerating Lubin-Tate tower.}
\label{The degenerating Lubin-Tate tower.}

We address the problem of topological nonrealizability of the Lubin-Tate tower by a simple method: we embed the Lubin-Tate tower into a larger tower, the {\em degenerating Lubin-Tate tower,} which {\em is} topologically realizable.

There are two fundamental properties of the degenerating Lubin-Tate tower which make it useful:
\begin{enumerate}
\item Its topological realizability, as we prove below, in Theorem \ref{realizability thm 1}.
\item The tower admits a stratification, which we construct in this section, and which we call the {\em stratification by degeneration type.} In Proposition \ref{open and closed subset} we show that one of the strata splits off, on the Raynaud generic fibres, as a coproduct summand, and this stratum is precisely a copy of the classical Lubin-Tate tower. We use this fact to get a handle on the \'{e}tale cohomology of $\Def(\mathbb{G})^{\degen}_{\lvl\varpi^m}$, in Theorem \ref{coh splitting thm} and Example \ref{ht 1 example 2}.
\end{enumerate}

The idea of the degenerating Lubin-Tate tower is straightforward. By definition, a Drinfeld level $\varpi^m$-structure on a formal $A$-module $\mathbb{G}$ over $B$ is an $A$-module homomorphism $\phi: \left( \varpi^{-m}A/A\right)^n \rightarrow \mathbb{G}(\mathfrak{m}_B)$ such that 
\begin{equation}
\label{drinfeld level condition 1}
\prod_{a\in \left( \varpi^{-1}A/A\right)^n} \left( X - \phi(a)\right)\mbox{\ divides\ } [\varpi]_{\mathbb{G}}(X).
\end{equation}
Condition \eqref{drinfeld level condition 1} is equivalent to:
\begin{equation}
\label{drinfeld level condition 2}
\prod_{a\in \left( \varpi^{-m}A/A\right)^n} \left( X - \phi(a)\right)\mbox{\ divides\ } [\varpi^m]_{\mathbb{G}}(X).
\end{equation}
We consider a relaxation of \eqref{drinfeld level condition 2} in which the Drinfeld divisibility condition \eqref{drinfeld level condition 2} is allowed to fail, but only within some fixed subset $S$ of $\left( \varpi^{-m}A/A\right)^n$, as follows:
\begin{definition}\label{def of degenerating level struct} 
Let $m$ be a nonnegative integer. 
Let $\mathbb{G}$ be a formal $A$-module of $A$-height $n$ over a complete local commutative $A$-algebra $B$ with maximal ideal $\mathfrak{m}_B$. 
Let $S$ be a subset of $\left(\varpi^{-m}A/A\right)^n$.
By a {\em level $\varpi^m$ structure on $\mathbb{G}$ of degeneration type $S$,} we mean a homomorphism 
$\phi: \left( \varpi^{-m}A/A\right)^n \rightarrow \mathbb{G}(\mathfrak{m}_B)$
of $A$-modules 
such that
\begin{equation}
\label{drinfeld level condition 3}
\prod_{a\notin S} \left( X - \phi(a)\right)\mbox{\ divides\ } [\varpi^m]_{\mathbb{G}}(X).
\end{equation}
We call such level structures---i.e., level $\varpi^m$ structures of degeneration type $S$, for various $S$---{\em degenerating level $\varpi^m$ structures.}
\end{definition}

Here are some observations about degenerating level structures.
\begin{enumerate}
\item In the case that $S$ is the empty set, a degenerating level $\varpi^m$ structure of type $S$ is precisely a Drinfeld level $\varpi^m$ structure. So Drinfeld level structures are a special case of degenerating level structures.
\item In the opposite extreme case, when $S$ is the entirety of $\left( \varpi^{-m}A/A\right)^n$, a degenerating level $\varpi^m$ structure of type $S$ is simply an $A$-module homomorphism $\phi: \left( \varpi^{-m}A/A\right)^n \rightarrow \mathbb{G}(\mathfrak{m}_B)$. 
\item
Given a formal $A$-module $\mathbb{G}$ over a field extension $k$ of the residue field of $A$, it is not difficult to see that there exists a formal affine scheme $\Def(\mathbb{G})_{\lvl\varpi^m}^{\degen S}$ which parameterizes isomorphism classes of deformations of $\mathbb{G}$ equipped with level $\varpi^m$-structure of degeneration type $S$.
\item 
If $S$ is a subset of $T\subseteq \left( \varpi^{-m}A/A\right)^n$, then $\Def(\mathbb{G})^{\degen S}_{\lvl\varpi^m}$ is a formal subscheme of $\Def(\mathbb{G})^{\degen T}_{\lvl\varpi^m}$. 
Consequently, we have a stratification of $\Def(\mathbb{G})^{\degen}_{\lvl\varpi^m}$ by the formal subschemes $\Def(\mathbb{G})^{\degen S}_{\lvl\varpi^m}$, ranging over various subsets $S$ of $\left( \varpi^{-m}A/A\right)^n$. We refer to this stratification as the {\em stratification by degeneration type.} This stratification is only a minor variation of the ``typed subgroups'' of Strickland \cite{MR1473889}.

Below, in Proposition \ref{open and closed subset}, we prove that in the extreme case that $S = \emptyset$ and $T = \left( \varpi^{-m}A/A\right)^n$, this inclusion is both closed and open on the generic fibre\footnote{We suspect that our argument for Proposition \ref{open and closed subset} generalizes to yield the same conclusion for general $S$ and $T$, but there seems to be no immediate need for this extra generality.}. That is, the generic fibre of $\Def(\mathbb{G})_{\lvl\varpi^m}$ is a closed and open rigid analytic subspace of the generic fibre of $\Def(\mathbb{G})^{\degen \left( \varpi^{-m}A/A\right)^n}_{\lvl\varpi^m}$, the deformation space for {\em all} degenerating level $\varpi^m$ structures. We will write $\Def(\mathbb{G})^{\degen}_{\lvl\varpi^m}$ as an abbreviation for $\Def(\mathbb{G})^{\degen \left( \varpi^{-m}A/A\right)^n}_{\lvl\varpi^m}$.
\item 
We have a natural map $\Def(\mathbb{G})^{\degen S}_{\lvl\varpi^m} \rightarrow \Def(\mathbb{G})^{\degen \varpi S}_{\lvl\varpi^{m-1}}$, given by the same construction as the map \eqref{map 2320}, above. In the case $S=\left( \varpi^{-m}A/A\right)^n$, We call the resulting sequence of formal schemes
\[ \dots\rightarrow\Def(\mathbb{G})^{\degen}_{\lvl\varpi^{2}}\rightarrow\Def(\mathbb{G})^{\degen}_{\lvl\varpi^{1}}\rightarrow\Def(\mathbb{G})^{\degen}_{\lvl\varpi^{0}}\]
the {\em degenerating Lubin-Tate tower.}
\item The formal scheme $\Def(\mathbb{G})^{\degen}_{\lvl\varpi^{m}}$ is isomorphic to $\Spf$ of the $\hat{Z}_p$-algebra
\begin{equation*}
 \Gamma(\Def(\mathbb{G}))[[X]]/[\varpi^m]_{\widetilde{\mathbb{G}}}(X) \otimes_{\Gamma(\Def(\mathbb{G}))} \dots \otimes_{\Gamma(\Def(\mathbb{G}))}\Gamma(\Def(\mathbb{G}))[[X]]/[\varpi^m]_{\widetilde{\mathbb{G}}}(X) ,\end{equation*}
an $n$-fold tensor power of copies of $\Gamma(\Def(\mathbb{G}))[[X]]/[\varpi^m]_{\widetilde{\mathbb{G}}}(X)$, 
where $\widetilde{\mathbb{G}}$ is the universal deformation of $\mathbb{G}$ defined over $\Gamma(\Def(\mathbb{G}))$. 
\end{enumerate}
\begin{example}\label{ht 1 example 1}
Suppose that $\mathbb{G}$ has $A$-height $1$. Then, for each positive integer $m$, we have a unique maximal-length sequence of $A$-submodules of $\left( \varpi^{-m}A/A\right)^n$:
\[ 
\varpi^{-m}A/A \supseteq \varpi^{-m+1}A/A \supseteq \dots
 \supseteq \varpi^{-2}A/A \supseteq \varpi^{-1}A/A \supseteq 0.\] 
Write $S_j$ for the complement of $\varpi^{-j}A/A$ in $\varpi^{-j-1}A/A$. Then $\Def(\mathbb{G})^{\degen S_{j}}_{\lvl\varpi^{m}}$ classifies deformations $\widetilde{\mathbb{G}}$ of $\mathbb{G}$ equipped with an $A$-module map $\eta: \varpi^{-m}A/A \rightarrow\widetilde{\mathbb{G}}(\mathfrak{m}_{\widetilde{\mathbb{G}}})$ whose kernel is contained in the $A$-submodule $\varpi^{-j}A/A\subseteq \varpi^{-m}A/A$. 

Consequently, the complement of $\Def(\mathbb{G})^{\degen S_{j+1}}_{\lvl\varpi^{m}}$ in $\Def(\mathbb{G})^{\degen S_j}_{\lvl\varpi^m}$ is isomorphic to $\Def(\mathbb{G})_{\lvl\varpi^j}$. On the generic fibres, we have a splitting of the deformation space $\Def(\mathbb{G})^{\degen}_{\lvl\varpi^{m}}$ with degenerating level $\varpi^m$-structures:
\begin{align}
\label{splitting 0120} \Def(\mathbb{G})^{\degen}_{\lvl\varpi^{m}} 
  &\cong \Def(\mathbb{G})_{\lvl\varpi^m}\\
\nonumber  &\ \ \ \coprod \Def(\mathbb{G})_{\lvl\varpi^{m-1}} \\
\nonumber  &\ \ \ \coprod \dots \\
\nonumber  &\ \ \ \coprod \Def(\mathbb{G})_{\lvl\varpi^{1}} \\
\nonumber  &\ \ \ \coprod \Def(\mathbb{G})_{\lvl\varpi^{0}}. 
\end{align}
The map $\Def(\mathbb{G})^{\degen}_{\lvl\varpi^{m}} \rightarrow \Def(\mathbb{G})^{\degen}_{\lvl\varpi^{m-1}}$ in the degenerating Lubin-Tate tower sends the component $\Def(\mathbb{G})_{\lvl\varpi^{j}}$ of the generic fibre to the component $\Def(\mathbb{G})_{\lvl\varpi^{j-1}}$ of the generic fibre, by the same map as in the classical Lubin-Tate tower.
Consequently, on the generic fibres, the height $1$ degenerating Lubin-Tate tower is the disjoint union of countably infinitely many copies of the classical Lubin-Tate tower, one for each nonnegative integer $i$, with the $i$th tower ``delayed'' by $i$ terms:
\begin{equation}\label{map of towers 1} 
\xymatrix{
\vdots \ar[d] & \vdots \ar[d] \\
 \Def(\mathbb{G})_{\lvl\varpi^{2}} \ar[r]\ar[d] &
  \Def(\mathbb{G})_{\lvl\varpi^{2}}\coprod \Def(\mathbb{G})_{\lvl\varpi^{1}}\coprod \Def(\mathbb{G})_{\lvl\varpi^{0}} \ar[d]\\
 \Def(\mathbb{G})_{\lvl\varpi^{1}} \ar[r]\ar[d] &
  \Def(\mathbb{G})_{\lvl\varpi^{1}}\coprod \Def(\mathbb{G})_{\lvl\varpi^{0}} \ar[d]\\
 \Def(\mathbb{G})_{\lvl\varpi^{0}} \ar[r] &
  \Def(\mathbb{G})_{\lvl\varpi^{0}} .}
\end{equation}
We caution the reader that the disjoint union decomposition of the degenerating Lubin-Tate tower described in \eqref{splitting 0120} and pictured in the right-hand column of \eqref{map of towers 1} only occurs after inverting $\varpi$, i.e., it is only a splitting of the generic fibres. 

To build intuition, it is helpful to see the splitting \eqref{splitting 0120} more explicitly in a special case. Let $A = \hat{\mathbb{Z}}_p$, and let $\mathbb{G}$ be the multiplicative formal group, so that the $p$-series $[p]_{\mathbb{G}}(X)$ is simply the polynomial $(X+1)^p-1$. We have $\Spf\Def(\mathbb{G})_{\lvl\varpi^{0}} \cong \hat{\mathbb{Z}}_p$, and furthermore:
\begin{align*}
 \Spf\Def(\mathbb{G})^{\degen}_{\lvl p^{m}} 
  &\cong \hat{\mathbb{Z}}_p[[X]]/[p^m]_{\mathbb{G}}(X) \\
  &\cong \hat{\mathbb{Z}}_p[[Y]]/[p^m]_{\mathbb{G}}(Y-1) \\
  &= \hat{\mathbb{Z}}_p[[Y]]/(Y^{p^m}-1) \\
  &= \hat{\mathbb{Z}}_p[[Y]]/\prod_{j=0}^m \phi_j(Y),
\end{align*}
where $\phi_0(Y) = Y$ and where, for $j>0$, the polynomial $\phi_j(Y)$ is the minimal polynomial of a primitive $p^j$th root of unity $\zeta_{p^j}$ over $\hat{\mathbb{Z}}_p[\zeta_{p^{j-1}}]$. We have $\Spf \hat{\mathbb{Z}}_p[[Y]]/\phi_j(Y) \cong \coprod \Def(\mathbb{G})_{\lvl\varpi^{j}}$ {\em after inverting $p$}. Consequently the generic fibre of $\Spf \hat{\mathbb{Z}}_p[[Y]]/\prod_{j=0}^m \phi_j(Y)$ is then the union of the generic fibres of $\Spf \hat{\mathbb{Z}}_p[[Y]]/\phi_j(Y)$ for $j=0, \dots ,m$.
\end{example}
In Example \ref{ht 1 example 1}, for each $A$-submodule $V$ of $\varpi^{-m}A/A$, it was convenient to be able to refer to some subset $S_j$ of $\varpi^{-m}A/A$ such that $\Def(\mathbb{G})^{\degen S_{j}}_{\lvl\varpi^{m}}$ would classify deformations with a level structure which would satisfy the divisibility condition on the elements of $V$. More generally, for arbitrary (finite) heights: for each $A$-submodule $V$ of $(\varpi^{-m}A/A)^n$, we will write $\cancel{V}$ for the complement of $V$ in $(\varpi^{-m}A/A)^n$. Consequently $\Def(\mathbb{G})^{\degen \cancel{V}}_{\lvl\varpi^{m}}$ classifies deformations with a level structure which satisfies the Drinfeld divisibility condition on the elements of $V$.

\begin{prop}\label{open and closed subset}
Let $m$ be a positive integer. 
Let $\mathbb{G}$ be a one-dimensional formal $A$-module, of finite $A$-height $n$, over some field extension $k$ of the residue field $A/\varpi$ of $A$. 
Then $\Def(\mathbb{G})_{\lvl \varpi^m}$ is closed and open in $\Def(\mathbb{G})_{\lvl \varpi^m}^{\degen}$.
\end{prop}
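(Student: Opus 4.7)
The strategy is to reduce the claim to a statement on the rigid-analytic generic fibre, where the universal deformation $\tilde{\mathbb{G}}$ becomes étale and the entire tower is governed by a finite étale cover of $\Def(\mathbb{G})^{\eta}$. Since the discussion preceding the Proposition explicitly interprets ``closed and open'' as being on the generic fibre, I would work throughout with the rigid generic fibres and their finite étale covers.

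First I would unpack the moduli interpretation. Because the degeneration type $S = (\varpi^{-m}A/A)^n$ imposes no residual divisibility constraint in Definition \ref{def of degenerating level struct}, the functor represented by $\Def(\mathbb{G})^{\degen}_{\lvl \varpi^m}$ is simply the functor sending a deformation to the set of $A$-module maps $\phi \colon (\varpi^{-m}A/A)^n \to \tilde{\mathbb{G}}[\varpi^m]$. Equivalently, it is the $n$-fold fibre product $(\tilde{\mathbb{G}}[\varpi^m])^n$ over $\Def(\mathbb{G})$, matching observation (6) preceding the Proposition. Next, since the derivative of $[\varpi^m]_{\tilde{\mathbb{G}}}(X)$ at $X=0$ is $\varpi^m$ times a unit coming from the logarithm, that polynomial becomes separable after inverting $\varpi$; hence $\tilde{\mathbb{G}}[\varpi^m]^{\eta}$ is a finite étale $A$-module scheme over $\Def(\mathbb{G})^{\eta}$, étale-locally isomorphic as a group scheme to the constant module $(A/\varpi^m)^n$. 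Therefore $\Def(\mathbb{G})^{\degen,\eta}_{\lvl \varpi^m}$ is a finite étale cover of $\Def(\mathbb{G})^{\eta}$, étale-locally isomorphic to the constant cover with fibre $\mathrm{End}_A\bigl((A/\varpi^m)^n\bigr)$.

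The key step is then to cut out $\Def(\mathbb{G})^{\eta}_{\lvl \varpi^m}$ inside this finite étale cover. On the generic fibre, the polynomial $\prod_{a \in (\varpi^{-m}A/A)^n}(X - \phi(a))$ has degree equal to that of the now-separable polynomial $[\varpi^m]_{\tilde{\mathbb{G}}}(X)$, so the Drinfeld divisibility condition of Definition \ref{def of drinfeld level struct} holds if and only if the $\phi(a)$ are pairwise distinct and exhaust the $\varpi^m$-torsion; equivalently, if and only if $\phi$ is an $A$-module isomorphism. Hence $\Def(\mathbb{G})^{\eta}_{\lvl \varpi^m}$ is the étale-local preimage of $GL_n(A/\varpi^m) \subset \mathrm{End}_A\bigl((A/\varpi^m)^n\bigr)$. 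The monodromy action of the étale fundamental group of $\Def(\mathbb{G})^{\eta}$ on the fibre $\mathrm{End}_A\bigl((A/\varpi^m)^n\bigr)$ is by $A$-module automorphisms on both sides, so it preserves the subset of invertible elements; since $GL_n(A/\varpi^m)$ is clopen in the finite discrete set $\mathrm{End}_A\bigl((A/\varpi^m)^n\bigr)$, it follows that $\Def(\mathbb{G})_{\lvl \varpi^m}^{\eta}$ is clopen in $\Def(\mathbb{G})_{\lvl \varpi^m}^{\degen,\eta}$.

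The hardest step, I anticipate, is justifying carefully that on the generic fibre $\tilde{\mathbb{G}}[\varpi^m]^{\eta}$ is étale-locally constant \emph{as an $A$-module scheme}, not merely as a finite étale cover; this requires invoking the rigid-analytic structure theory of formal $A$-modules, essentially present in \cite{MR0384707} and the ``typed subgroup'' machinery of \cite{MR1473889}. Once that is in hand, the identification of Drinfeld level structures with $A$-module isomorphisms and the resulting clopen decomposition of the finite étale cover are formal.
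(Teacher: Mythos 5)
Your proposal is correct, but it takes a genuinely different route from the paper's own argument. The paper proves the clopen property more directly at the level of the moduli problem: openness of the classical Lubin--Tate locus is extracted from a rank condition on the reduction $\phi \bmod \varpi$ (the image of $\phi$ must span a maximal $A/\varpi$-subspace of $\widetilde{\mathbb{G}}(\mathfrak{m}_{\widetilde{\mathbb{G}}})/\varpi$), while closedness is extracted from Drinfeld's explicit presentation of $\Gamma(\Def(\mathbb{G})_{\lvl\varpi^m})$ as the joint vanishing locus of a power series $\Theta$ evaluated at the coordinates $\theta_1,\dots,\theta_n$ inside $\Gamma(\Def(\mathbb{G})^{\degen}_{\lvl\varpi^m})$ (the lemma preceding Proposition 4.4 of \cite{MR0384707}). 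You instead pass immediately to the Raynaud generic fibre, use that $\widetilde{\mathbb{G}}[\varpi^m]$ becomes finite \'{e}tale there, identify $\Def(\mathbb{G})^{\degen}_{\lvl\varpi^m}$ on the generic fibre with the finite \'{e}tale cover with fibre $\mathrm{End}_A\bigl((A/\varpi^m)^n\bigr)$, and observe that $GL_n(A/\varpi^m)$ is a monodromy-stable clopen subset of that finite discrete fibre. Your route is conceptually cleaner and delivers openness and closedness in a single stroke; the paper's route via $\Theta$ is more computational but also yields concrete equations for the closed subscheme at the level of formal models, not merely generic fibres.

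One remark on the step you flag as hardest. That $\widetilde{\mathbb{G}}[\varpi^m]$ on the generic fibre is \'{e}tale-locally constant \emph{as an $A$-module scheme} with fibre $(A/\varpi^m)^n$ is more elementary than you suggest and does not require the typed-subgroup machinery of \cite{MR1473889}. Since $\widetilde{\mathbb{G}}[\varpi^j]$ is finite flat of rank $q^{nj}$ over $\Def(\mathbb{G})$ (with $q = |A/\varpi|$), at a geometric point of the generic fibre the torsion is a finite $A/\varpi^m$-module $M$ with $|M[\varpi^j]| = q^{nj}$ for all $j \le m$. Writing $M \cong \bigoplus_i A/\varpi^{j_i}$, the identity $|M[\varpi]| = q^{n}$ forces exactly $n$ cyclic summands, and then $|M| = q^{nm}$ together with $j_i \le m$ forces every $j_i = m$, so $M \cong (A/\varpi^m)^n$.
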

\begin{proof}
For any $A$-submodule $S$ of $\left( \varpi^{-m}A/A\right)^n$,
consider $\Def(\mathbb{G})_{\lvl \varpi^m}^{\degen S}$ as a formal subscheme of $\Def(\mathbb{G})_{\lvl \varpi^m}^{\degen}$. The former is characterized by its classifying those triples $(\widetilde{\mathbb{G}},\phi,\psi)$ whose degenerating level structure $\phi: \left( \varpi^{-m}A/A\right)^n \rightarrow \widetilde{\mathbb{G}}(\mathfrak{m}_{\widetilde{\mathbb{G}}})$ has the property that the image of $\phi\mid_{\left( \varpi^{-m}A/A\right)^n\backslash S}$ spans an $A/\varpi$-submodule of $\widetilde{\mathbb{G}}(\mathfrak{m}_{\widetilde{\mathbb{G}}})/\varpi$ which is as large as possible, i.e., isomorphic to a maximal $A$-submodule of $\left( \varpi^{-m}A/A\right)^n$ not containing $S$. This is an open condition. Hence $\Def(\mathbb{G})_{\lvl \varpi^m}^{\degen S}$ is open in $\Def(\mathbb{G})_{\lvl \varpi^m}^{\degen}$. 

The formal scheme $\Def(\mathbb{G})_{\lvl \varpi^m}^{\degen}$ is isomorphic to 
\begin{eqnarray*} \Gamma(\Def(\mathbb{G}))[[\theta_1]]/[\varpi^m]_{\widetilde{\mathbb{G}}}(\theta_1) \otimes_{\Gamma(\Def(\mathbb{G}))}\Gamma(\Def(\mathbb{G}))[[\theta_2]]/[\varpi^m]_{\widetilde{\mathbb{G}}}(\theta_2)  \\
\ \ \ \ \ \otimes_{\Gamma(\Def(\mathbb{G}))} \dots \otimes_{\Gamma(\Def(\mathbb{G}))} \Gamma(\Def(\mathbb{G}))[[\theta_n]]/[\varpi^m]_{\widetilde{\mathbb{G}}}(\theta_n).\end{eqnarray*}
In the case $S = \emptyset$, in the proof of the lemma preceding Proposition 4.4 of \cite{MR0384707}, Drinfeld gives a presentation for $\Def(\mathbb{G})_{\lvl \varpi^m}^{\degen S} = \Def(\mathbb{G})_{\lvl \varpi^m}$ by calculating the power series $\Theta$ such that the joint vanishing of $\Theta(\theta_1), \dots ,\Theta(\theta_n)$ characterizes the subset $\Def(\mathbb{G})_{\lvl \varpi^m}$ of $\Def(\mathbb{G})_{\lvl \varpi^m}^{\degen}$. Vanishing of $\Theta(\theta_1), \dots ,\Theta(\theta_n)$ is a closed condition, so $\Def(\mathbb{G})_{\lvl \varpi^m}^{\degen S}$ is a closed subset of $\Def(\mathbb{G})_{\lvl \varpi^m}^{\degen}$. 
\end{proof}

\subsection{Topological realizability of the degenerating Lubin-Tate tower.}
\label{Topological realizability...}

We now demonstrate the topological realizability of the degenerating Lubin-Tate tower, but there is very little work to do, since the essential calculation is a special case of the results of the influential paper of Hopkins--Kuhn--Ravenel \cite{MR1758754}.
\begin{theorem}\label{realizability thm 1}
Let $m$ be a positive integer. 
Let $\mathbb{G}$ be a one-dimensional formal $\hat{\mathbb{Z}}_p$-module, of positive finite $p$-height $n$, over some algebraic field extension $k$ of $\mathbb{F}_p$. There exists a compatible sequence $\mathcal{O}^{\topsym}_{\lvl p^0},\mathcal{O}^{\topsym}_{\lvl p^1},\mathcal{O}^{\topsym}_{\lvl p^2},\dots$ of presheaves, where for each $m$, $\mathcal{O}^{\topsym}_{\lvl p^m}$ is a presheaf of $E_{\infty}$-ring spectra on the small \'{e}tale site of $\Def(\mathbb{G})^{\degen}_{\lvl p^m}$. These presheaves satisfy the following conditions:
\begin{itemize}
\item For each \'{e}tale open $U$ of $\Def(\mathbb{G})^{\degen}_{\lvl p^m}$, we have an isomorphism of graded rings $\pi_*\mathcal{O}^{\topsym}_{\lvl p^m}(U) \cong \Gamma(U)[u^{\pm 1}]$, with $u$ in degree $0$ and $\Gamma(U)$ concentrated in degree $0$.
\item For each \'{e}tale open $U = \Def(\mathbb{G})^{\degen}_{\lvl p^m} \times_{\Spf W(k)} \Spf W(F)$ with $F$ a separable field extension of $k$, the ring spectrum $\mathcal{O}^{\topsym}_{\lvl p^m}(U)$ is complex oriented, and the formal group law on $\pi_0\mathcal{O}^{\topsym}_{\lvl p^m}(U)$ arising from the complex orientation is equal to the universal formal group law on $\Gamma(U)$.
\item $\mathcal{O}^{\topsym}_{\lvl p^0}$ recovers the presheaf $\mathcal{O}^{\topsym}$ constructed by Goerss-Hopkins theory, described above in \cref{Review of the Lubin-Tate tower.}.
\end{itemize}
\end{theorem}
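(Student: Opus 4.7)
The plan is to define the global sections first and then extend étale-locally. For each $m \geq 0$, I would set
\begin{equation*}
\Gamma\!\left(\mathcal{O}^{\topsym}_{\lvl p^m}\right) := F\!\left(\Sigma^{\infty}_{+} BC_{p^m}^{n},\, E(\mathbb{G})\right),
\end{equation*}
viewed as a commutative $E(\mathbb{G})$-algebra in $E_\infty$-ring spectra (the $E_\infty$ structure comes from the coalgebra structure on $\Sigma^\infty_+ BC_{p^m}^n$ given by the diagonal). The collapse map $BC_{p^m}^n \to \pt$ supplies a unit $E(\mathbb{G}) \to \Gamma(\mathcal{O}^{\topsym}_{\lvl p^m})$, and the quotient maps $C_{p^{m+1}} \twoheadrightarrow C_{p^m}$ induce the structural maps $\Gamma(\mathcal{O}^{\topsym}_{\lvl p^m}) \to \Gamma(\mathcal{O}^{\topsym}_{\lvl p^{m+1}})$. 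When $m=0$ the space $BC_{p^0}^n$ is a point, so $\Gamma(\mathcal{O}^{\topsym}_{\lvl p^0}) \simeq E(\mathbb{G})$, matching $\mathcal{O}^{\topsym}$ on global sections of $\Def(\mathbb{G})$.

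Next I would identify the homotopy groups using Hopkins--Kuhn--Ravenel \cite{MR1758754}. The Atiyah--Hirzebruch / Künneth argument of \cite{MR1758754} gives
\begin{equation*}
E(\mathbb{G})^{0}\!\left(BC_{p^m}^n\right) \;\cong\; E(\mathbb{G})^0[[x_1,\dots,x_n]]\big/\bigl([p^m]_{\widetilde{\mathbb{G}}}(x_1),\dots,[p^m]_{\widetilde{\mathbb{G}}}(x_n)\bigr),
\end{equation*}
which is precisely the coordinate ring of $\Def(\mathbb{G})^{\degen}_{\lvl p^m}$ by its tensor-product presentation recorded in observation (6) of \cref{The degenerating Lubin-Tate tower.}. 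The same calculation shows $\pi_* \Gamma(\mathcal{O}^{\topsym}_{\lvl p^m})$ is a free module over $\pi_* E(\mathbb{G})$ concentrated in even degrees, i.e., $\pi_{2t}\Gamma(\mathcal{O}^{\topsym}_{\lvl p^m}) \cong \Gamma(\Def(\mathbb{G})^{\degen}_{\lvl p^m})\cdot u^t$. Complex orientation is inherited from $E(\mathbb{G})$ via the unit map, and the associated formal group law is the base change of the universal deformation along $\Gamma(\Def(\mathbb{G})) \to \Gamma(\Def(\mathbb{G})^{\degen}_{\lvl p^m})$, which is exactly the universal formal group law over $\Gamma(\Def(\mathbb{G})^{\degen}_{\lvl p^m})$ by construction.

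To extend from global sections to a presheaf on the full étale site, I would invoke Goerss--Hopkins obstruction theory \cite{MR2125040}, or equivalently Lurie's spectral étale site machinery, applied to $\Gamma(\mathcal{O}^{\topsym}_{\lvl p^m})$. Specifically, given an étale map $U \to \Def(\mathbb{G})^{\degen}_{\lvl p^m}$ with $\Gamma(U)$ a classical étale $\Gamma(\Def(\mathbb{G})^{\degen}_{\lvl p^m})$-algebra, the relevant obstruction groups---which are Andr\'{e}--Quillen-type cohomologies of the cotangent complex of the étale extension---vanish, so there is an essentially unique étale lift $\Gamma(\mathcal{O}^{\topsym}_{\lvl p^m}) \to \mathcal{O}^{\topsym}_{\lvl p^m}(U)$ of $E_\infty$-ring spectra realizing $\Gamma(U)$ on $\pi_0$. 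The resulting assignment $U \mapsto \mathcal{O}^{\topsym}_{\lvl p^m}(U)$ is functorial and sheafy, and the quotient maps $C_{p^{m+1}} \to C_{p^m}$ furnish the compatibility $\mathcal{O}^{\topsym}_{\lvl p^m}|_{\acute{e}t} \to (\text{structure map})^* \mathcal{O}^{\topsym}_{\lvl p^{m+1}}$ required to make the tower of presheaves coherent.

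The main difficulty is really only conceptual: one must verify that the HKR presentation of $E(\mathbb{G})^0(BC_{p^m}^n)$ coincides ring-theoretically (and as an algebra over $\Gamma(\Def(\mathbb{G}))$) with the moduli interpretation of $\Def(\mathbb{G})^{\degen}_{\lvl p^m}$. That boils down to noting that an $E(\mathbb{G})$-cohomology class on $BC_{p^m}^n$ is the same data as an $n$-tuple of $p^m$-torsion sections of the formal group over the universal deformation, which is exactly a degenerating level $p^m$-structure in the sense of \cref{def of degenerating level struct}---no Drinfeld divisibility is imposed, matching the definition of $\Def(\mathbb{G})^{\degen}_{\lvl p^m}$ where $S$ is all of $(p^{-m}\hat{\mathbb{Z}}_p/\hat{\mathbb{Z}}_p)^n$. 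Everything else is formal bookkeeping with Goerss--Hopkins obstruction theory.
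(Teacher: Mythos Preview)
Your proposal is correct and follows essentially the same route as the paper: define global sections as the function spectrum $F(\Sigma^{\infty}_{+}BC_{p^m}^{n}, E(\mathbb{G}))$, identify its $\pi_0$ with $\Gamma(\Def(\mathbb{G})^{\degen}_{\lvl p^m})$ via the Hopkins--Kuhn--Ravenel calculation and observation (6), and then extend over the \'{e}tale site using the essential uniqueness of $E_\infty$ lifts along \'{e}tale extensions. If anything, you are slightly more explicit than the paper in spelling out the compatibility maps coming from $C_{p^{m+1}}\twoheadrightarrow C_{p^m}$ and in naming the obstruction-theoretic mechanism (Andr\'{e}--Quillen vanishing) behind the \'{e}tale extension step, which the paper simply summarizes as ``unique-up-to-contractible-choice.''
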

\begin{proof}
Let $C_{p^m}^n$ be the Cartesian product of $n$ copies of the cyclic group $C_{p^m}$. A well-known result of \cite{MR1758754} establishes that, for any Landweber exact complex oriented cohomology theory $E^*$, the $E$-cohomology $E^*(BC_{p^m}^n)$ is isomorphic to the tensor product
\[ E^*[[X]]/[p]_{\mathbb{F}}(X) \otimes_{E^*} \dots \otimes_{E^*} E^*[[X]]/[p]_{\mathbb{F}}(X)\]
of $n$ copies of $E^*[[X]]/[p^m]_{\mathbb{F}}(X)$, where $[p^m]_{\mathbb{F}}(X)$ is the $p^m$-series of the formal group law $\mathbb{F}$ on $E^*$ associated to the complex orientation of $E^*$.

In particular, consider the Morava-Lubin-Tate $E_{\infty}$-ring spectrum $E(\mathbb{G})$ constructed in \cite{MR2125040}, whose fundamental property is that it represents a Landweber-exact complex-oriented cohomology theory with coefficient ring $E(\mathbb{G})^* \cong \Gamma(\Def(\mathbb{G}))[u^{\pm 1}]$, and such that the formal group law on $E(\mathbb{G})^0$ associated to the complex orientation is the universal deformation $\widetilde{\mathbb{G}}$ of $\mathbb{G}$. Here $u$ is in degree $2$, while $\Gamma(\Def(\mathbb{G}))$ is concentrated in degree zero. 

Observe that 
\begin{itemize}
\item the spectrum $\Sigma^{\infty}_+ B(C_{p^m}^n)$ is a suspension spectrum, hence it has a diagonal map,
\item and $E(\mathbb{G})$ is an $E_{\infty}$-ring.
\end{itemize}
Consequently the function spectrum $F\left( \Sigma^{\infty}_+ B(C_{p^m}^n), E(\mathbb{G})\right)$ is an $E_{\infty}$-ring spectrum.
By the result of Hopkins--Kuhn--Ravenel, we have
\begin{align} 
\nonumber \pi_*F\left( \Sigma^{\infty}_+ B(C_{p^m}^n), E(\mathbb{G})\right) 
  &\cong E^*(B(C_{p^m}^n)) \\
\label{iso 4309}  &\cong \left( \Gamma(\Def(\mathbb{G}))[[X]]/[p^m]_{\widetilde{\mathbb{G}}}(X)\otimes_{\Gamma(\Def(\mathbb{G}))} \dots\right. \\ 
\nonumber &\ \ \ \ \ \left. \dots \otimes_{\Gamma(\Def(\mathbb{G}))}\Gamma(\Def(\mathbb{G}))[[X]]/[p^m]_{\widetilde{\mathbb{G}}}(X)\right)[u^{\pm 1}].
\end{align}
By \eqref{iso 4309} and observation \#6 in the list of observations on degenerating level structures in \cref{The degenerating Lubin-Tate tower.}, the graded $E(\mathbb{G})^*$-algebra \eqref{iso 4309} is isomorphic to $\Gamma(\Def^{\degen}_{\lvl p^m})$.

Hence $F\left( \Sigma^{\infty}_+ B(C_{p^m}^n), E(\mathbb{G})\right)$ is an even-periodic $E_{\infty}$ $E(\mathbb{G})$-algebra whose $\pi_0$ agrees with $\Gamma(\Def(\mathbb{G})^{\degen}_{\lvl p^m})$. For each affine \'{e}tale open $U$ of $\Gamma\left(\Def(\mathbb{G})^{\degen}_{\lvl p^m}\right)$, let $\mathcal{O}^{\topsym}_{\lvl p^m}(U)$ be the unique-up-to-contractible-choice $E_{\infty}$-ring $E(\mathbb{G})$-algebra such that $\pi_0\left(\mathcal{O}^{\topsym}_{\lvl p^m}(U)\right) \cong \Gamma(U)$ as $\Gamma(\Def(\mathbb{G})^{\degen}_{\lvl p^m})$-algebras and such that
\begin{align*} \pi_*\left(\mathcal{O}^{\topsym}_{\lvl p^m}(U)\right) &\cong \Gamma(U)\otimes_{\Gamma(\Def(\mathbb{G})^{\degen}_{\lvl p^m})}\pi_*F\left( \Sigma^{\infty}_+ B(C_{p^m}^n), E(\mathbb{G})\right).\end{align*}
\end{proof}

\subsection{Review of the role of cohomology of the LT tower in Langlands correspondences.}
\label{Review of the role...}

Given a formal scheme $X$ with ideal of definition containing the prime $p$, and given a prime $\ell\neq p$, we will write $H_{\et}^i(X;\mathbb{Q}_{\ell})$ for $\mathbb{Q}_{\ell}\otimes_{\hat{\mathbb{Z}}_{\ell}}\lim_{j\rightarrow\infty}H_{\et}^i(X;\mathbb{Z}/\ell^j\mathbb{Z})$. The cohomology of the rigid analytic generic fiber of $X$, with constructible torsion coefficients of torsion order prime to $p$, agrees with the \'{e}tale cohomology of $X$ with the same coefficients (see Lemma 2.1 of \cite{MR1262943}), so it does no harm to confuse $X$ and its generic fibre when we write $H_{\et}^i(X;\mathbb{Q}_{\ell})$. We also write $H_{c}^i(X;\mathbb{Q}_{\ell})$ for $\mathbb{Q}_{\ell}\otimes_{\hat{\mathbb{Z}}_{\ell}}\lim_{j\rightarrow\infty}H_{c}^i(X;\mathbb{Z}/\ell^j\mathbb{Z})$, where $H_{c}^i(X;\mathbb{Z}/\ell^j\mathbb{Z})$ is the cohomology {\em with compact support} of the rigid analytic generic fibre of $X$. Similarly, $H_{\et}^i(X;\overline{\mathbb{Q}}_{\ell})$ and $H_{c}^i(X;\overline{\mathbb{Q}}_{\ell})$ will denote $H_{\et}^i(X;\mathbb{Q}_{\ell})\otimes_{\mathbb{Q}_{\ell}}\overline{\mathbb{Q}}_{\ell}$ and $H_{c}^i(X;\mathbb{Q}_{\ell})\otimes_{\mathbb{Q}_{\ell}}\overline{\mathbb{Q}}_{\ell}$, respectively.

Let $\mathbb{G}$ be a one-dimensional formal $A$-module, of $A$-height $n$, over the algebraic closure of the residue field $A/\varpi$ of $A$. Write $F$ for the field of fractions of $A$. The Galois group of the cover $\Def(\mathbb{G})_{\lvl\varpi^m} \rightarrow \Def(\mathbb{G})$ is isomorphic to $GL_n(A/\varpi^m)$, so in the limit, $GL_n(A)$ acts on $\colim_{m\rightarrow\infty}H^*_{\et}(\Def(\mathbb{G})_{\lvl\varpi^m};\mathbb{Q}_{\ell})$. The automorphism group $\Aut(\mathbb{G})$ also acts on each stage of the classical Lubin-Tate tower, hence also on $\colim_{m\rightarrow\infty}H^*_{\et}(\Def(\mathbb{G})_{\lvl\varpi^m};\mathbb{Q}_{\ell})$. We may incorporate an action of the Weil group\footnote{A nice textbook treatment of the Weil group can be found in \cite{MR2392026}. Its one-sentence characterization is as follows: $W_F$ is the preimage of $\mathbb{Z}\subseteq \hat{\mathbb{Z}}$ under the canonical map \[ \Gal(\overline{F}/F) \rightarrow \Gal\left(\overline{(A/\varpi)}/(A/\varpi)\right)\stackrel{\cong}{\longrightarrow}\hat{\mathbb{Z}}.\]} by extending the base field of the generic fibre $\Def(\mathbb{G})_{\lvl\varpi^m}$ before taking cohomology, so that $\Gal(\overline{F}/F^{nr})$ acts on $\Def(\mathbb{G})_{\lvl\varpi^m} \times_{F^{nr}} \overline{F}$ and hence on its cohomology. This action of $\Gal(\overline{F}/F^{nr})$ extends to an action of $W_F$ (see Proposition 2.3.2 of \cite{MR1719811}), so that $GL_n(A)\times \Aut(\mathbb{G})\times W_F$ acts on $\colim_m H^*_{\et}(\Def(\mathbb{G})_{\lvl\varpi^m}\times_{F^{nr}} \overline{F};\mathbb{Q}_{\ell})$ and on $\colim_m H^*_{c}(\Def(\mathbb{G})_{\lvl\varpi^m}\times_{F^{nr}} \overline{F};\mathbb{Q}_{\ell})$. In fact a bit more is true: the automorphism group $\Aut(\mathbb{G})$ is isomorphic to the unit group of the maximal order $\mathcal{O}_{D_{1/n,F}}$ of the central division $F$-algebra of Hasse invariant $1/n$, and the action of $GL_n(A)\times \Aut(\mathbb{G})\times W_F$ on $\colim_m H^*_{\et}(\Def(\mathbb{G})_{\lvl\varpi^m}\times_{F^{nr}} \overline{F};\mathbb{Q}_{\ell})$ and on $\colim_m H^*_{c}(\Def(\mathbb{G})_{\lvl\varpi^m}\times_{F^{nr}} \overline{F};\mathbb{Q}_{\ell})$
extends to an action of the subgroup of $GL_n(F)\times D^{\times}_{1/n,F} \times W_F$ consisting of those triples $(M,x,\sigma)$ such that $\left|\det M\right|$ is equal to the product of $\left|\det x\right|$ with the norm of the image of $\sigma$ under the composite of the projection $W_F\rightarrow W_F^{\ab}$ with the inverse of the Artin map $F^{\times}\stackrel{\cong}{\longrightarrow} W_F^{ab}$. (To be clear, we will not need to make use of all of this structure in this paper: in the new results in this paper we shall only have need to use the action of $GL_n(A)\times \Aut(\mathbb{G})\times W_F$, and in fact we will almost always leave off the Weil group factor, for simplicity, and only work with $GL_n(A)\times \Aut(\mathbb{G})$.)

The upshot is that, given a $\overline{\mathbb{Q}}_{\ell}$-linear representation $\rho$ of $\Aut(\mathbb{G})$, 
for each integer $i$ we get two $\overline{\mathbb{Q}}_{\ell}$-linear representations of $GL_n(A)\times W_F$:
\begin{align*}
 \hom_{\overline{\mathbb{Q}}_{\ell}[\Aut(\mathbb{G})]}\left( \rho , \colim_m H^i_{\et}(\Def(\mathbb{G})_{\lvl\varpi^m}\times_{F^{nr}}\overline{F};\overline{\mathbb{Q}}_{\ell})\right)\mbox{\ \ and} \\
 \hom_{\overline{\mathbb{Q}}_{\ell}[\Aut(\mathbb{G})]}\left( \rho , \colim_m H^i_{c}(\Def(\mathbb{G})_{\lvl\varpi^m}\times_{F^{nr}}\overline{F};\overline{\mathbb{Q}}_{\ell})\right).
\end{align*}
Conversely, given a $\overline{\mathbb{Q}}_{\ell}$-linear representation $\pi$ of $GL_n(A)$, we get 
$\overline{\mathbb{Q}}_{\ell}$-linear representations of $\Aut(\mathbb{G})\times W_F$:
\begin{align}
\nonumber \hom_{\overline{\mathbb{Q}}_{\ell}[GL_n(A)]}\left( \colim_m H^i_{\et}(\Def(\mathbb{G})_{\lvl\varpi^m}\times_{F^{nr}}\overline{F};\overline{\mathbb{Q}}_{\ell}),\pi\right)\mbox{\ \ and} \\
\label{dual of cont coh 1} \hom_{\overline{\mathbb{Q}}_{\ell}[GL_n(A)]}\left( \colim_m H^i_{c}(\Def(\mathbb{G})_{\lvl\varpi^m}\times_{F^{nr}}\overline{F};\overline{\mathbb{Q}}_{\ell}),\pi\right).
\end{align}

If $\pi$ is irreducible and supercuspidal, then the representation \eqref{dual of cont coh 1} of $D^{\times}_{1/n,F}$ is known \cite{MR1876802} to be finite-dimensional, smooth, and to satisfy the relation
\begin{align*}
 \sum_{i\geq 0} (-1)^{i+n-1}\hom_{\overline{\mathbb{Q}}_{\ell}[GL_n(F)]}\left( \colim_m H^i_{c}(\Def(\mathbb{G})_{\lvl\varpi^m}\times_{F^{nr}}\overline{F};\overline{\mathbb{Q}}_{\ell}),\pi\right)
 &= n\cdot \mathcal{JL}(\pi)
\end{align*}
in the Grothendieck group of admissible representations of $D^{\times}_{1/n,F}$. Here $\mathcal{JL}(\pi)$ denotes the representation of $D^{\times}_{1/n,F}$ associated to $\pi$ by the local Jacquet-Langlands correspondence (\cite{MR0771672}, \cite{MR0700135}, in the $n=2$ case \cite{MR0401654}).
In this paragraph we have largely ignored the action of $W_F$, but if one considers the analogue of \eqref{dual of cont coh 1} with $D^{\times}_{1/n,F}$ replaced by $W_F$, one realizes the local Langlands correspondence rather than the local Jacquet-Langlands correspondence. 

We have sketched only the handful of ideas most immediately relevant for this paper, and this sketch gives only a very incomplete picture of the full story told by \cite{MR1876802}. 

We recall one further development of the theory. In 4.2.7 of \cite{MR2141708}, ``Hypothesis (H)'' is stated: it is a mild, but unproven, assumption of finite-dimensionality of the $\ell$-adic cohomology of the strata in Strauch's stratification\footnote{To avoid any possible misunderstanding, we point out that Strauch's stratification of \cite{MR2141708} is not the same as the stratification of $\Def(\mathbb{G})^{\degen}_{\lvl \varpi^m}$ by degeneration type. Indeed, the two stratifications are not defined on the same spaces.} of the ``mod $\varpi$ boundary'' of the generic fibre of each space in the Lubin-Tate tower.

Both the \'{e}tale and the compactly-supported cohomology of $\Def(\mathbb{G})_{\lvl\varpi^m}\times_{F^{nr}}\overline{F}$ are concentrated in degrees $\leq 2n-2$. More specifically, Lemma 2.5.1 of \cite{MR2141708} establishes that 
\[ H^{i}_{\et}(\Def(\mathbb{G})_{\lvl\varpi^m}\times_{F^{nr}}\overline{F};\overline{\mathbb{Q}}_{\ell})\]
is finite-dimensional for each $m$, and trivial if $i>n-1$, while
\[ H^{i}_{c}(\Def(\mathbb{G})_{\lvl\varpi^m}\times_{F^{nr}}\overline{F};\overline{\mathbb{Q}}_{\ell})\]
is finite-dimensional for each $m$, and trivial if $i<n-1$ or $i>2n-2$.

In Theorem 2.5.2 of \cite{MR2141708} (conjectured by Carayol in \cite{MR1044827}), it is proven conditionally that the cohomology in the middle degree plays a special role:
\begin{theorem} \label{strauch thm}
Let $\ell$ be a prime distinct from the characteristic $p$ of $A/\varpi$, and let $\pi$ be an irreducible supercuspidal representation of $GL_n(F)$ which is realizable over $\overline{\mathbb{Q}}_{\ell}$. Suppose that {\em Hypothesis (H)} holds. Then, for all $i\neq n-1$, \[ \hom_{\overline{\mathbb{Q}}_{\ell}[GL_n(F)]}\left( \colim_m H^i_{c}(\Def(\mathbb{G})_{\lvl\varpi^m}\times_{F^{nr}}\overline{F};\overline{\mathbb{Q}}_{\ell}),\pi\right)\] is trivial. Furthermore, in the middle degree, we have
\begin{align*}
 \hom_{\overline{\mathbb{Q}}_{\ell}[GL_n(F)]}\left( \colim_m H^{n-1}_{c}(\Def(\mathbb{G})_{\lvl\varpi^m}\times_{F^{nr}}\overline{F};\overline{\mathbb{Q}}_{\ell}),\pi\right)
  & \cong \mathcal{JL}(\pi)^{\oplus n}.
\end{align*}
\end{theorem}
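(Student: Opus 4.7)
The plan is to leverage the Euler-Poincar\'{e} identity mentioned earlier in \cref{Review of the role...}, namely
\[ \sum_{i\geq 0} (-1)^{i+n-1} \hom_{\overline{\mathbb{Q}}_{\ell}[GL_n(F)]}\left( \colim_m H^i_{c}(\Def(\mathbb{G})_{\lvl\varpi^m}\times_{F^{nr}}\overline{F};\overline{\mathbb{Q}}_{\ell}),\pi\right) = n\cdot \mathcal{JL}(\pi), \]
due to Harris--Taylor \cite{MR1876802} and which is where the supercuspidal hypothesis on $\pi$ enters crucially. If one can show that for every $i\neq n-1$ the $i$th term of the alternating sum is zero, then the displayed identity reduces to the desired isomorphism in degree $n-1$, with the sign working out because $(-1)^{(n-1)+(n-1)} = +1$. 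So the whole problem is to prove vanishing for $i\neq n-1$.

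First I would exploit the cohomological dimension bounds recalled immediately before the theorem statement: $H^i_c$ is already zero for $i<n-1$ and $i>2n-2$, so only the range $n\leq i\leq 2n-2$ needs to be handled. To access these groups, I would use Strauch's stratification of the mod $\varpi$ boundary of the generic fibre of $\Def(\mathbb{G})_{\lvl\varpi^m}$ (which is the geometric input to Hypothesis (H)): the boundary strata are indexed by proper $A$-submodule flags, and each stratum fibers, up to a pro-\'{e}tale cover, over a lower-dimensional deformation space with some residual formal-module structure that gives rise to a non-trivial Levi factor. The key structural claim I would want to establish is that for each non-central stratum $Z_P$ attached to a proper parabolic $P\subseteq GL_n$, the cohomology $\colim_m H^*_c(Z_P;\overline{\mathbb{Q}}_{\ell})$ is, as a virtual $GL_n(F)$-representation, induced from a representation of the Levi $M_P$ via parabolic induction $\mathrm{Ind}_P^{GL_n(F)}$.

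The payoff is then immediate from representation theory: a supercuspidal $\pi$ by definition does not embed into any proper parabolically induced representation, so $\hom_{\overline{\mathbb{Q}}_{\ell}[GL_n(F)]}(\mathrm{Ind}_P^{GL_n(F)}\sigma,\pi)=0$ for every $M_P$-representation $\sigma$ (using admissibility plus Frobenius reciprocity, or alternatively the exactness of the Jacquet functor $\pi_N$ which vanishes on supercuspidals). To glue the strata into total compactly supported cohomology, I would set up the long exact sequences associated to the open/closed decompositions provided by the stratification, and use Hypothesis (H) to ensure the relevant cohomology groups are finite-dimensional so the colimit over $m$ commutes with $\hom(-,\pi)$ after passing to isotypic components. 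Iterating the excision sequences one stratum at a time, the supercuspidal-isotypic component of every $H^i_c$ for $i\neq n-1$ is killed, which yields the vanishing half of the theorem. Combined with the Euler-Poincar\'{e} identity above, only the middle-degree summand survives and it must equal $\mathcal{JL}(\pi)^{\oplus n}$.

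The hard part is clearly the parabolic-induction description of the non-middle strata cohomology; this is exactly the content that requires Hypothesis (H) and the careful analysis of the unipotent radicals of the stabilizer subgroups acting on each stratum. Concretely, one needs to identify the nearby-cycles contributions of each stratum as coming from a product of a lower Lubin-Tate tower (for the Levi $M_P$) with an affine factor encoding the unipotent radical, and to track the $GL_n(F)$-action so that the resulting cohomology is manifestly of the form $\mathrm{Ind}_P^{GL_n(F)}(-)$. This is precisely where I would expect to spend the bulk of the technical effort, and it is also the reason the result remains conditional: without a suitable form of Hypothesis (H) controlling the finite-dimensionality and admissibility of the strata cohomology, one cannot pass from virtual identities in Grothendieck groups to honest vanishing of individual $\mathrm{Hom}$ groups.
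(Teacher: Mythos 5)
The paper does not supply a proof of Theorem \ref{strauch thm}; it is quoted verbatim as Theorem~2.5.2 of \cite{MR2141708} (conditional on Hypothesis~(H)) and then noted to have been proven unconditionally by Mieda in \cite{MR2680204}. So there is no ``paper's own proof'' to measure you against: the appendix states the result only as background, sandwiched between a reference to Strauch's Lemma~2.5.1 (for the dimension bounds) and a pointer to Mieda's independent argument.

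That said, your sketch is a reasonable reconstruction of Strauch's original strategy. Starting from the Harris--Taylor Euler--Poincar\'{e} identity, reducing the theorem to vanishing of the supercuspidal-isotypic components outside the middle degree, and then using the stratification of the mod-$\varpi$ boundary to exhibit those components as parabolically induced (hence supercuspidal-free) is indeed the line Strauch follows, and you have correctly located where Hypothesis~(H) enters: finite-dimensionality and admissibility of the strata cohomology, which is what allows the passage from virtual identities to genuine $\hom$-vanishing. One caveat you should make explicit: the Euler--Poincar\'{e} identity as quoted in the paper holds only in the Grothendieck group, so even after the off-middle terms vanish you obtain an equality of virtual representations $[\hom(H^{n-1}_c,\pi)]=n[\mathcal{JL}(\pi)]$; to upgrade this to an honest isomorphism $\hom(H^{n-1}_c,\pi)\cong\mathcal{JL}(\pi)^{\oplus n}$ one needs semisimplicity (or at least the absence of nontrivial extensions) of the relevant $D^\times_{1/n,F}$-representation, which Strauch also handles but which your sketch glosses over. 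You should also be aware that Mieda's unconditional proof is genuinely different in method (it bypasses the boundary stratification entirely), so your outline reconstructs only the conditional route.

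Since the theorem appears in this paper purely as a citation and not as a proved result, the appropriate action in the paper itself is to keep the reference, not to insert a proof.
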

That is, {\em the supercuspidal irreducible representations of $GL_n(F)$ all occur in the middle-degree cohomology of the Lubin-Tate tower}. In the other degrees, one can find principal series and Steinberg-like representations instead.

Today Theorem \ref{strauch thm} is known unconditionally: it was proven by Mieda in \cite{MR2680204}, by a different method, which avoids the assumption of Hypothesis (H).

\subsection{Partial Drinfeld level structures and the cohomology of the degenerating LT tower.}

Proposition \ref{open and closed subset} yields an equivariant splitting in cohomology:
\begin{theorem}\label{coh splitting thm}
Let $\mathbb{G}$ be a one-dimensional formal $A$-module, of $A$-height $n$, over some field extension $k$ of the residue field $A/\varpi$ of $A$. 
Let $m$ be a positive integer. Then, for each integer $i$, the $\mathbb{Q}_{\ell}$-linear representation $H^i_{\et}(\Def(\mathbb{G})_{\lvl p^m};\mathbb{Q}_{\ell})$ of $GL_n(A/\varpi^m)\times \Aut(\mathbb{G})$ is a summand in the $\mathbb{Q}_{\ell}$-linear representation $H^i_{\et}(\Def(\mathbb{G})_{\lvl p^m}^{\degen};\mathbb{Q}_{\ell})$ of $GL_n(A/\varpi^m)\times \Aut(\mathbb{G})$.

Similarly:
\begin{itemize}
\item the $\mathbb{Q}_{\ell}$-linear representation $H^i_{c}(\Def(\mathbb{G})_{\lvl p^m};\mathbb{Q}_{\ell})$ of $GL_n(A/\varpi^m)\times \Aut(\mathbb{G})$ is a summand in the $\mathbb{Q}_{\ell}$-linear representation $H^i_{c}(\Def(\mathbb{G})_{\lvl p^m}^{\degen};\mathbb{Q}_{\ell})$ of $GL_n(A/\varpi^m)\times \Aut(\mathbb{G})$,
\item the $\mathbb{Q}_{\ell}$-linear representation $H^i_{\et}(\Def(\mathbb{G})_{\lvl p^m}\times_{F^{nr}} \overline{F};\mathbb{Q}_{\ell})$ of $GL_n(A/\varpi^m)\times \Aut(\mathbb{G})\times W_F$ is a summand in the $\mathbb{Q}_{\ell}$-linear representation $H^i_{\et}(\Def(\mathbb{G})_{\lvl p^m}^{\degen}\times_{F^{nr}} \overline{F};\mathbb{Q}_{\ell})$ of $GL_n(A/\varpi^m)\times \Aut(\mathbb{G})\times W_F$,
\item and the $\mathbb{Q}_{\ell}$-linear representation $H^i_{c}(\Def(\mathbb{G})_{\lvl p^m}\times_{F^{nr}} \overline{F};\mathbb{Q}_{\ell})$ of $GL_n(A/\varpi^m)\times \Aut(\mathbb{G})\times W_F$ is a summand in the $\mathbb{Q}_{\ell}$-linear representation $H^i_{c}(\Def(\mathbb{G})_{\lvl p^m}^{\degen}\times_{F^{nr}} \overline{F};\mathbb{Q}_{\ell})$ of $GL_n(A/\varpi^m)\times \Aut(\mathbb{G})\times W_F$.
\end{itemize}
\end{theorem}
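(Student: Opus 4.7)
\medskip

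The plan is to deduce the four splittings as formal consequences of Proposition \ref{open and closed subset}. That proposition already establishes the hard geometric input, namely that the rigid analytic generic fibre $\mathcal{D}^{\mathrm{cl}}$ of $\Def(\mathbb{G})_{\lvl\varpi^m}$ sits inside the rigid analytic generic fibre $\mathcal{D}^{\degen}$ of $\Def(\mathbb{G})^{\degen}_{\lvl\varpi^m}$ as a subspace that is simultaneously open and closed. Since $\ell$-adic étale cohomology of formal schemes with ideal of definition containing $p$ agrees with $\ell$-adic étale cohomology of the generic fibre (Lemma 2.1 of \cite{MR1262943}), it suffices to produce the splittings at the level of rigid analytic generic fibres.

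Given the clopen inclusion $\mathcal{D}^{\mathrm{cl}} \hookrightarrow \mathcal{D}^{\degen}$, let $\mathcal{D}^{\prime}$ denote the open-and-closed complement, so that $\mathcal{D}^{\degen} = \mathcal{D}^{\mathrm{cl}} \sqcup \mathcal{D}^{\prime}$ as rigid analytic spaces. Both étale cohomology and compactly supported étale cohomology with constructible $\mathbb{Z}/\ell^j$-coefficients convert such disjoint unions into direct sums; passing to the inverse limit over $j$ and then tensoring with $\mathbb{Q}_\ell$ yields direct sum decompositions
\[ H^i_{\et}(\mathcal{D}^{\degen};\mathbb{Q}_\ell) \cong H^i_{\et}(\mathcal{D}^{\mathrm{cl}};\mathbb{Q}_\ell) \oplus H^i_{\et}(\mathcal{D}^{\prime};\mathbb{Q}_\ell) \]
and the analogous one for $H^i_c$. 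The same argument, applied after base change of $\mathcal{D}^{\degen}$ along $F^{nr} \hookrightarrow \overline{F}$ (which preserves disjoint unions of rigid spaces), yields the analogous splittings of $H^i_{\et}(-\times_{F^{nr}}\overline{F};\mathbb{Q}_\ell)$ and $H^i_c(-\times_{F^{nr}}\overline{F};\mathbb{Q}_\ell)$.

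What remains is to verify that the decomposition $\mathcal{D}^{\degen} = \mathcal{D}^{\mathrm{cl}} \sqcup \mathcal{D}^{\prime}$ is preserved by the actions of $GL_n(A/\varpi^m)$, $\Aut(\mathbb{G})$, and (after base change) $W_F$. For $GL_n(A/\varpi^m)$: an element $g$ acts on a triple $(\widetilde{\mathbb{G}},\psi,\phi)$ by replacing the $A$-module homomorphism $\phi : (\varpi^{-m}A/A)^n \to \widetilde{\mathbb{G}}(\mathfrak{m}_{\widetilde{\mathbb{G}}})$ with $\phi \circ g^{-1}$; since Drinfeld's divisibility condition \eqref{drinfeld level condition 2} only involves the product over \emph{all} of $(\varpi^{-m}A/A)^n$, it is manifestly $GL_n(A/\varpi^m)$-invariant, so being a genuine Drinfeld level structure (the locus cut out by $S=\emptyset$) is preserved. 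For $\Aut(\mathbb{G})$: its action only modifies the rigidification $\psi$ (or, equivalently, postcomposes $\phi$ with an automorphism of $\widetilde{\mathbb{G}}$), which transforms the divisor $\prod_a (X-\phi(a))$ into the pushforward divisor and still divides $[\varpi^m]_{\widetilde{\mathbb{G}}}(X)$. For $W_F$: its action is through the geometric action on $\overline{F}$, and the condition of being a Drinfeld level structure is defined by an equation in the structure sheaf, which is stable under Galois.

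The main obstacle, such as it is, is purely bookkeeping: one must ensure that Proposition \ref{open and closed subset} really gives clopen-ness on the rigid analytic generic fibre (not merely on the special fibre, where it fails, as Example \ref{ht 1 example 1} shows), and that constructible $\ell$-torsion coefficients behave well on the Raynaud generic fibre so that the disjoint-union splitting survives the inverse limit defining $\mathbb{Q}_\ell$-cohomology. Both points are standard consequences of Berkovich's formalism in \cite{MR1262943},\cite{MR1395723}, and no further geometric input beyond Proposition \ref{open and closed subset} is required.
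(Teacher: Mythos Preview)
Your proposal is correct and follows exactly the paper's approach: the paper's proof of this theorem consists solely of the sentence ``Proposition \ref{open and closed subset} yields an equivariant splitting in cohomology,'' and you have spelled out what that sentence means---clopen decomposition of the generic fibre, direct sum on cohomology, and equivariance of the splitting. Your explicit verification of the $GL_n(A/\varpi^m)\times\Aut(\mathbb{G})\times W_F$-stability of the Drinfeld locus is more than the paper writes down, but the paper has already flagged (in the introduction and in observation \#4 following Definition~\ref{def of degenerating level struct}) that the classical Lubin-Tate tower sits equivariantly inside the degenerating tower, so this is the intended reasoning.
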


\begin{example}\label{ht 1 example 2}
Consider the case of Theorem \ref{coh splitting thm} in which $\mathbb{G}$ has $A$-height $1$. The $A$-submodules of $\varpi^{-m}A/A$ all fit into the sequence
\[ \varpi^{-m}A/A \supseteq \varpi^{-m+1}A/A \supseteq \dots \supseteq \varpi^{-1}A/A \supseteq 0.\]
In Example \ref{ht 1 example 1}, we wrote $S_j$ for the complement of $\varpi^{-j}A/A$ in $\varpi^{-j-1}A/A$, and we showed that the complement of $\Def(\mathbb{G})^{\degen S_{j+1}}_{\lvl \varpi^m}$ in $\Def(\mathbb{G})^{\degen S_j}_{\lvl \varpi^m}$ is isomorphic to $\Def(\mathbb{G})_{\lvl \varpi^j}$. 
Consequently a strengthening of Theorem \ref{coh splitting thm} holds: not only is the cohomology of $\Def(\mathbb{G})_{\lvl \varpi^m}$ a summand in the cohomology of $\Def(\mathbb{G})^{\degen}_{\lvl \varpi^m}$, it is even true that $\Def(\mathbb{G})^{\degen}_{\lvl \varpi^m}$ splits equivariantly as the direct sum of the cohomologies of the formal schemes $\Def(\mathbb{G})_{\lvl \varpi^j}$ for each $j\in \{ 0, \dots ,m\}$.
\end{example}

\begin{example}\label{ht 2 example}
Now consider the case of Theorem \ref{coh splitting thm} in which $\mathbb{G}$ has $A$-height $2$. The $A$-submodules of $(\varpi^{-1}A/A)^2$ are of three kinds:
\begin{enumerate}
\item the zero module,
\item $A/\varpi$-rank $1$ submodules of $(\varpi^{-1}A/A)^2$, which are in bijection with the (geometric) points in the projective space $\mathbb{P}^1_{A/\varpi}$,
\item and $(\varpi^{-1}A/A)^2$ itself.
\end{enumerate}
This yields a decomposition of $\Def(\mathbb{G})^{\degen}_{\lvl\varpi^1}$ as follows: 
\begin{description}
\item[Rank $2$ degeneration] We will use a notation introduced in \cref{The degenerating Lubin-Tate tower.}: for each rank $A/\varpi$-linear subspace $V$ of $(A/\varpi)^2$, write $\cancel{V}$ for the complement of $V$ in $(A/\varpi)^2$.

The complement of $\bigcup_{\rank_{A/\varpi}(V)=1} \Def(\mathbb{G})^{\degen \cancel{V}}_{\lvl \varpi^1}$ in $\Def(\mathbb{G})^{\degen}_{\lvl\varpi^1}$ classifies only the trivial degenerating level $\varpi^1$-structure, i.e., 
\begin{align*}
 \Def(\mathbb{G})^{\degen}_{\lvl\varpi^1} \backslash\bigcup_{\rank_{A/\varpi}(V)=1} \Def(\mathbb{G})^{\degen \cancel{V}}_{\lvl \varpi^1} 
  &=\Def(\mathbb{G}),\end{align*}
the classical Lubin-Tate space without level structures.
\item[Rank $1$ degeneration] For each $A/\varpi$-rank $1$ $A/\varpi$-linear subspace $V$ of \linebreak $(\varpi^{-1}A/A)^2$, the complement of $\Def(\mathbb{G})_{\lvl \varpi^1}$ in $\Def(\mathbb{G})^{\degen \cancel{V}}_{\lvl\varpi^1}$ classifies deformations of $\mathbb{G}$ equipped with a degenerating level $\varpi^1$-structure which degenerates in $V$ and {\em only} in $V$. Up to isomorphism, this does not depend on the choice of $V$: it is equivalent to giving a deformation $\widetilde{\mathbb{G}}$ of $V$ equipped with a map \begin{align} \label{map 304224}\phi: \varpi^{-1}A/A &\rightarrow \widetilde{\mathbb{G}}(\mathfrak{m}_{\widetilde{\mathbb{G}}})\end{align} satisfying the divisibility condition on all elements of $\varpi^{-1}A/A$. Note that the domain of \eqref{map 304224} is $\varpi^{-1}A/A$, not $(\varpi^{-1}A/A)^2$!

Drinfeld, in \cite{MR0384707}, described the space classifying deformations of $\mathbb{G}$ equipped with a map of the form \eqref{map 304224} satisfying the stated divisibility condition: it is the formal scheme $\Spf L_1$, where $L_1$ is the ring 
\[\Gamma(\Def(\mathbb{G}))[[\theta]]/\left( [\varpi]_{\widetilde{\mathbb{G}}}(\theta)/\theta\right).\]
\item[Rank $0$ degeneration type] The intersection of the subspaces $\Def(\mathbb{G})^{\degen \cancel{V}}_{\lvl\varpi^1}$ of $\Def(\mathbb{G})^{\degen}_{\lvl\varpi^1}$, across all $A/\varpi$-rank $1$ $A/\varpi$-linear subspaces $V$ of $(\varpi^{-1}A/A)^2$, is $\Def(\mathbb{G})_{\lvl \varpi^1}$, the classical Lubin-Tate space with Drinfeld level $\varpi^1$-structures.
\end{description}
Consequently the $\ell$-adic \'{e}tale cohomology of $\Def(\mathbb{G})^{\degen}_{\lvl\varpi^1}$ splits $GL_2(A/\varpi)\times \Aut(\mathbb{G})$-equivariantly as follows:
\begin{align*}
 H^i_{\et}\left(\Def(\mathbb{G})^{\degen}_{\lvl\varpi^1}; \mathbb{Q}_{\ell}\right)
  &\cong H^i_{\et}\left(\Def(\mathbb{G}); \mathbb{Q}_{\ell}\right) \\
  &\ \ \ \ \ \oplus \bigoplus_{x\in \mathbb{P}^1_{A/\varpi}}H^i_{\et}\left(\Spf L_1; \mathbb{Q}_{\ell}\right)\\
  &\ \ \ \ \ \oplus H^i_{\et}\left( \Def(\mathbb{G})_{\lvl \varpi^1}; \mathbb{Q}_{\ell}\right)
\end{align*}
More importantly for our purposes, a similar decomposition holds for compactly supported cohomology $H^i_c$ in place of \'{e}tale cohomology. It also holds ($W_F$-equivariantly) for the cohomology of these deformation spaces base-changed to $\overline{F}$. 

It is not difficult to extend this decomposition of the cohomology of $\Def(\mathbb{G})^{\degen}_{\lvl\varpi^1}$ to higher level and higher heights. We carry this out in Theorem \ref{generic fibre decomp}. Our argument for Theorem \ref{generic fibre decomp} will require some use of ``partial Drinfeld level structures,'' which we now define.
\end{example}

\begin{definition}\label{def of classical type and cyclic type}
Let $\mathbb{G}$ be a one-dimensional formal $A$-module.
\begin{itemize}
\item Suppose $m$ is a nonnegative integer.
By a {\em deformation of $\mathbb{G}$ with partial Drinfeld level $\varpi^m$ structure} we mean a deformation $\widetilde{\mathbb{G}}$ of $\mathbb{G}$ together with an $A$-module homomorphism $D \rightarrow \mathbb{G}(\mathfrak{m}_B)$, for some $A$-submodule $D$ of $\left( \varpi^{-m}A/A\right)^n$, such that
\begin{align*}
\prod_{a\in D} \left( X - \phi(a)\right)\mbox{\ divides\ } [\varpi]_{\mathbb{G}}(X).
\end{align*}
We refer to $D$ as the {\em domain} of the partial level structure.
\item 
It is not difficult to see that, for a fixed choice of $A$-submodule $D$ of $\left( \varpi^{-m}A/A\right)^n$, there exists a formal scheme $\Def(\mathbb{G})^{\partial D}_{\lvl \varpi^m}$ of deformations of $\mathbb{G}$ equipped with partial Drinfeld level $\varpi^m$ structure. By a {\em partial Lubin-Tate space} we will mean a formal scheme of the form $\Def(\mathbb{G})^{\partial D}_{\lvl \varpi^m}$ for some nonnegative integer $m$, some formal $A$-module $\mathbb{G}$ of some finite height $n$, and some $A$-submodule $D$ of $\left( \varpi^{-m}A/A\right)^n$.
\item By contrast, we refer to a formal scheme of the form $\Def(\mathbb{G})_{\lvl \varpi^m}$, for any nonnegative integer $m$ and formal $A$-module of finite height $n$, as a {\em Lubin-Tate space of classical type}.
\end{itemize}
\end{definition}

\begin{lemma}\label{partial lubin-tate decomp lemma}
Every partial Lubin-Tate space of level $\varpi^1$ is finite and flat over a Lubin-Tate space of classical type. Furthermore, after inverting $\varpi$, every partial Lubin-Tate space of level $\varpi^1$ is \'{e}tale over a Lubin-Tate space of classical type.
\end{lemma}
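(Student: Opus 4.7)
The plan is to prove the stronger assertion that any partial Lubin-Tate space of level $\varpi^1$ is finite and flat over the classical Lubin-Tate space $\Def(\mathbb{G}) = \Def(\mathbb{G})_{\lvl\varpi^0}$ itself, and becomes \'{e}tale over it after inverting $\varpi$. I proceed by induction on the $A/\varpi$-rank $k$ of the domain $D$. Choose a basis $f_1, \ldots, f_k$ of $D$ and set $D_j = \langle f_1, \ldots, f_j \rangle$, producing a filtration $0 = D_0 \subset D_1 \subset \cdots \subset D_k = D$. It suffices to show that each successive restriction map $\Def(\mathbb{G})^{\partial D_j}_{\lvl\varpi^1} \to \Def(\mathbb{G})^{\partial D_{j-1}}_{\lvl\varpi^1}$ is finite flat, and \'{e}tale after inverting $\varpi$: composing these and using $\Def(\mathbb{G})^{\partial D_0}_{\lvl\varpi^1} = \Def(\mathbb{G})$ (vacuity of the empty level structure) gives the result.

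For the inductive step, let $R_{j-1} = \Gamma(\Def(\mathbb{G})^{\partial D_{j-1}}_{\lvl\varpi^1})$ and let $\widetilde{\mathbb{G}}$ denote the universal deformation. Write $q = |A/\varpi|$. The universal partial level structure $\phi: D_{j-1}\to \widetilde{\mathbb{G}}(\mathfrak{m}_{R_{j-1}})$ yields a factorization $[\varpi]_{\widetilde{\mathbb{G}}}(X) = P_{j-1}(X)\cdot Q_{j-1}(X)$ in $R_{j-1}[[X]]$, where $P_{j-1}(X) = \prod_{a \in D_{j-1}}(X -_{\widetilde{\mathbb{G}}} \phi(a))$ is a distinguished polynomial of degree $q^{j-1}$ and $Q_{j-1}$ is a distinguished power series of degree $q^n - q^{j-1}$. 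Introducing a new formal variable $t$ to represent $\phi(f_j)$ and enforcing Drinfeld divisibility for $D_j$, after cancelling the factor $P_{j-1}(X)$ common to both sides, reduces to the single condition that
\[
\prod_{c\in (A/\varpi)^{\times}}\prod_{d \in D_{j-1}}\bigl(X -_{\widetilde{\mathbb{G}}}\phi(d) -_{\widetilde{\mathbb{G}}}[c]_{\widetilde{\mathbb{G}}}(t)\bigr) \;\Big|\; Q_{j-1}(X)
\]
holds in $R_{j-1}[[t]][[X]]$. The left-hand side is a distinguished polynomial in $X$ of degree $(q-1)q^{j-1}$ with coefficients in $R_{j-1}[[t]]$.

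Dividing $Q_{j-1}(X)$ by that polynomial via Weierstrass division produces a remainder polynomial in $X$ of degree less than $(q-1)q^{j-1}$; its coefficients generate the ideal $I_j \subset R_{j-1}[[t]]$ cutting out $R_j := \Gamma(\Def(\mathbb{G})^{\partial D_j}_{\lvl\varpi^1})$. The structural heart of the inductive step is the verbatim analogue of Drinfeld's argument in the proof of Proposition 4.3 of \cite{MR0384707}: $I_j$ is in fact principal, generated by a distinguished polynomial in $t$ of degree $q^n - q^{j-1}$. Consequently $R_j$ is free over $R_{j-1}$ of rank $q^n - q^{j-1}$, and iterating over $j$ gives $R_k$ free over $\Gamma(\Def(\mathbb{G}))$ of rank $\prod_{j=1}^{k}(q^n - q^{j-1})$, which matches the expected count of injective $A/\varpi$-linear maps $D \hookrightarrow (A/\varpi)^n$. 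After inverting $\varpi$, the $\varpi$-torsion subscheme $\widetilde{\mathbb{G}}[\varpi]$ becomes \'{e}tale over the base, and a partial Drinfeld level structure becomes simply an injective $A/\varpi$-linear map $D \hookrightarrow \widetilde{\mathbb{G}}[\varpi]$; Hensel's lemma then forces the distinguished polynomial defining $R_j$ over $R_{j-1}$ to have simple roots, making the map \'{e}tale.

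The main obstacle is the principality statement for $I_j$ and the identification of its generator as a distinguished polynomial in $t$ of the predicted degree: this is where all the content of Drinfeld's argument lies. His original argument uses the formal group law, together with the fact that roots of $Q_{j-1}$ are permuted by multiplication by scalars $[c]_{\widetilde{\mathbb{G}}}$ and by formal-group translation by elements of $\phi(D_{j-1})$, to reduce a priori many relations to a single one. One must check that this combinatorial-and-algebraic reduction goes through at each stage of the partial filtration, and not merely in the top stage $j=n$; the verification is routine once one tracks the Weierstrass degrees carefully through the filtration, and regularity of each $R_j$ then follows from Drinfeld's dimension count.
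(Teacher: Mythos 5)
Your argument is correct and runs along the same lines as the paper's own proof: both organize the induction on the $A/\varpi$-rank of the domain $D$, and both identify the crux as Drinfeld's Proposition 4.3 of \cite{MR0384707}, which exhibits each successive restriction map as $\Spf$ of a free extension of rank $q^n-q^{j-1}$ given by a single distinguished polynomial. The paper's version is terser (it normalizes $D$ to $(\varpi^{-1}A/A)^d$ and directly writes down Drinfeld's presentation $L_j = L_{j-1}[[\theta]]/\bigl([\varpi]_{\widetilde{\mathbb{G}}}(\theta)/\prod_{a}(\theta-\phi_{j-1}(a))\bigr)$), while yours unwinds the Weierstrass-division bookkeeping that shows the a priori many coefficient relations collapse to one; but the underlying content is identical.
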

\begin{proof}
This is really a result of Drinfeld, expressed in only slightly different language. 
Every $A$-submodule of $(\varpi^{-1}A/A)^n$ is isomorphic to a direct sum of copies of $A/\varpi$. For concreteness, suppose $D = (\varpi^{-1}A/A)^d \subseteq (\varpi^{-1}A/A)^n$, with $d\leq n$. Then $\Def(\mathbb{G})^{\partial D}_{\lvl \varpi^1}$ is described by a result of Drinfeld, from Proposition 4.3 of \cite{MR0384707}:
\begin{align*}
 \Def(\mathbb{G})^{\partial D}_{\lvl \varpi^1}
  &\cong \Spf L_d,\end{align*}
where $L_d$ is constructed inductively: $L_0 := \Gamma(\Def(\mathbb{G})$, while \begin{align*} L_j &:= L_{j-1}[[\theta]]/\left( [\varpi]_{\widetilde{\mathbb{G}}}(\theta)/\prod_{a\in (\varpi^{-1}A/A)^{j-1}}(\theta - \phi_{j-1}(a))\right)\end{align*}
for $j\geq 1$.
Here $\phi_{j-1}$ is the universal partial Drinfeld level $\varpi^1$-structure with domain $(\varpi^{-1}A/A)^{j-1}$, classified by $\Spf L_{j-1}$. Finiteness and flatness of each of the maps $L_0 \rightarrow L_1 \rightarrow\dots \rightarrow L_d$ is proven by Drinfeld in Proposition 4.3 of \cite{MR0384707}. It is straightforward to see that the maps are consequently \'{e}tale after inverting $\varpi$.
\end{proof}

\begin{prop}\label{generic fibre decomp}
Let $m$ be a positive integer. 
Let $\mathbb{G}$ be a one-dimensional formal $A$-module, of finite $A$-height, over some field extension $k$ of the residue field $A/\varpi$ of $A$. Then the generic fibre of the degenerating Lubin-Tate space $\Def(\mathbb{G})^{\degen}_{\lvl \varpi^m}$ decomposes as a disjoint union of finitely many regular quasi-affine\footnote{``Quasi-affine'' is defined in Definition 5.5 of \cite{MR1395723}: a rigid analytic space $X$ is {\em quasi-affine} if each compact subset of $X$ is contained in an affinoid domain which is isomorphic to a rational domain in the analytification of an affine scheme of finite type over an appropriate base. 

For the purposes of this paper, we suggest taking the notion of ``quasi-affineness'' as a black box. Quasi-affineness is used in this paper in only two places: in Theorem \ref{generic fibre decomp}, we show that degenerating Lubin-Tate spaces are quasi-affine; and in Theorem \ref{main thm}, we apply the quasi-affineness of degenerating Lubin-Tate spaces in order to identify the abutment of the modified vanishing cycles stalk spectral sequence. For the former argument, we use Berkovich's Corollary 5.6 of \cite{MR1395723}, a tool for showing that certain rigid analytic spaces are quasi-affine, which we can deploy in a black box fashion.} rigid analytic spaces.

Furthermore, if $m=1$, then the generic fibre of $\Def(\mathbb{G})^{\degen}_{\lvl \varpi^m}$ decomposes as a disjoint union of the generic fibres of partial Lubin-Tate spaces.
\end{prop}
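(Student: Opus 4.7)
The strategy is to stratify the generic fibre of $\Def(\mathbb{G})^{\degen}_{\lvl \varpi^m}$ by the kernel of the degenerating level structure. On the generic fibre, the $\varpi^m$-torsion $\widetilde{\mathbb{G}}[\varpi^m]$ of the universal deformation is finite \'{e}tale over the base, locally free of rank $n$ as an $A/\varpi^m$-module. Consequently, for any $A$-linear map $\phi\colon(\varpi^{-m}A/A)^n \to \widetilde{\mathbb{G}}[\varpi^m]$, the kernel of $\phi$ is locally constant on the generic fibre. So, for each $A$-submodule $K \subseteq (\varpi^{-m}A/A)^n$, the locus $\Sigma_K$ where $\ker\phi = K$ is both open and closed. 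Since there are only finitely many such $K$, this produces the required finite disjoint-union decomposition.

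For the $m=1$ case, we further identify each stratum $\Sigma_K$ with the generic fibre of a partial Lubin-Tate space. Fix any $A/\varpi$-complement $D$ of $K$ in $(\varpi^{-1}A/A)^n$. On $\Sigma_K$, the restriction $\phi|_D$ is an injection into $\widetilde{\mathbb{G}}[\varpi]$, and because on the generic fibre the roots of $[\varpi]_{\widetilde{\mathbb{G}}}(X)$ are exactly the elements of $\widetilde{\mathbb{G}}[\varpi]$, the Drinfeld divisibility condition is automatic. Hence $\Sigma_K$ is naturally isomorphic, over the generic fibre of $\Def(\mathbb{G})$, to the generic fibre of the partial Lubin-Tate space $\Def(\mathbb{G})^{\partial D}_{\lvl \varpi^1}$, establishing the second assertion.

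For regularity and quasi-affineness of each piece, the key input is that each stratum is finite and flat over a classical Lubin-Tate space, and \'{e}tale on the generic fibre. At level $\varpi^1$ this is precisely Lemma \ref{partial lubin-tate decomp lemma}. Since $\Def(\mathbb{G}) \cong \Spf W(k)[[u_1,\dots,u_{n-1}]]$ has as its generic fibre the open unit polydisc, which is regular and quasi-affine, any \'{e}tale cover of it inherits both properties: regularity by standard commutative algebra, and quasi-affineness via Berkovich's Corollary 5.6 of \cite{MR1395723}, which lets us upgrade finiteness-and-\'{e}taleness over a quasi-affine base to quasi-affineness of the cover. For general $m$ we proceed inductively up the tower, using that the transition maps $\Def(\mathbb{G})^{\degen}_{\lvl\varpi^k} \to \Def(\mathbb{G})^{\degen}_{\lvl\varpi^{k-1}}$ are finite flat and \'{e}tale on generic fibres, so that each stratum at the top level remains finite flat over $\Def(\mathbb{G})$ and \'{e}tale generically, allowing the same regularity and quasi-affineness argument to apply.

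The main obstacle I anticipate is the $m>1$ case, since the paper's Definition \ref{def of classical type and cyclic type} of a partial Drinfeld level structure is only set up at level $\varpi^1$: the divisibility condition in that definition involves $[\varpi]_{\mathbb{G}}(X)$ rather than $[\varpi^m]_{\mathbb{G}}(X)$. To handle this cleanly, one must either extend Drinfeld's Proposition 4.3 of \cite{MR0384707} to an appropriate ``higher-level partial'' setting, or reduce inductively by first decomposing the stratum over each stratum at the previous level. I would take the latter route: given the decomposition at level $\varpi^{m-1}$ and the explicit presentation of the tower maps, identify the fibre of each top-level stratum over a stratum of $\Def(\mathbb{G})^{\degen}_{\lvl \varpi^{m-1}}$ as a disjoint union of spaces built from iterated Drinfeld-style constructions, and then verify finiteness, flatness, and generic \'{e}taleness stepwise.
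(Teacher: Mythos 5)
Your proposal is correct and takes essentially the same route as the paper: at level $\varpi^1$, decompose the generic fibre by the kernel of the universal degenerating level structure (your ``locally constant kernel'' argument is a clean way to get the open-and-closedness that the paper gets from Proposition \ref{open and closed subset}), identify the strata with partial Lubin-Tate spaces, and deduce regularity and quasi-affineness from Lemma \ref{partial lubin-tate decomp lemma} together with Berkovich's Corollary 5.6 of \cite{MR1395723}. Your concern about $m>1$ is well-founded, and the paper resolves it by exactly your proposed ``latter route'': induct on $m$ using Drinfeld's presentation of $\Def(\mathbb{G})^{\degen}_{\lvl\varpi^m}$ over $\Def(\mathbb{G})^{\degen}_{\lvl\varpi^{m-1}}$ (as in part (b) of the lemma following Proposition 4.3 of \cite{MR0384707}), so the strata at each level stay finite, generically \'{e}tale, and regular over $\Def(\mathbb{G})$ without ever invoking a higher-level partial Drinfeld structure.
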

\begin{proof}
We begin by proving the second claim, before moving on to the first claim.
Suppose that $m=1$.  
Given an integer $d \in \{ 1, \dots ,n\}$, choose a $d$-dimensional $A/\varpi$-linear subspace $V$ of $(\varpi^{-1}A/A)^n$. Consider the $(d-1)$-dimensional $A/\varpi$-linear subspaces $W$ of $V$. The complement of $\Def(\mathbb{G})^{\degen \cancel{W}}_{\lvl \varpi^1}$ in $\Def(\mathbb{G})^{\degen \cancel{V}}_{\lvl \varpi^1}$ classifies deformations of $\mathbb{G}$ with level structure which is nondegenerate outside of $V$, and degenerate somewhere in $W$. Taking the union over $W$: the complement of \[ \bigcup_{\rank_{A/\varpi}W=d-1}\Def(\mathbb{G})^{\degen \cancel{W}}_{\lvl \varpi^1}\] in $\Def(\mathbb{G})^{\degen \cancel{V}}_{\lvl \varpi^1}$ classifies deformations of $\mathbb{G}$ with level structure which is nondegenerate outside of $V$, and degenerate {\em everywhere} in $V$. Such a level structure is uniquely determined by giving a partial level structure with domain of $A/\varpi$-linear dimension $n-d$, i.e.,
\begin{align*}
 \Def(\mathbb{G})^{\degen \cancel{V}}_{\lvl \varpi^1} \backslash \left( \bigcup_{\rank_{A/\varpi}W=d-1}\Def(\mathbb{G})^{\degen \cancel{W}}_{\lvl \varpi^1}\right) 
 &\cong \Def(\mathbb{G})^{\partial \left( (\varpi^{-1}A/A)^{n-d}\right)}_{\lvl \varpi^1}.
\end{align*}
Consequently, after inverting $\varpi$, by Proposition \ref{open and closed subset} $\Def(\mathbb{G})^{\degen}_{\lvl \varpi^1}$ decomposes as follows:
\begin{align}
\label{level 1 grassmannian decomp} \Def(\mathbb{G})^{\degen}_{\lvl \varpi^1}
  &\cong \ \coprod_{d=0, \dots ,n}\ \left( \coprod_{V\in \Gr_d\left( (\varpi^{-1}A/A)^n\right)} \Def(\mathbb{G})^{\partial (\varpi^{-1}A/A)^{n-d}}_{\lvl \varpi^1}\right),
\end{align}
where $\Gr_d\left( (\varpi^{-1}A/A)^n\right)$ is the Grassmannian, i.e., the set of $d$-dimensional $A/\varpi$-linear subspaces of $(\varpi^{-1}A/A)^n$.

The right-hand side of \eqref{level 1 grassmannian decomp} is a disjoint union of partial Lubin-Tate spaces. This proves the second claim.
By Lemma \ref{partial lubin-tate decomp lemma}, the generic fibre of each of those partial Lubin-Tate spaces is finite, \'{e}tale, and regular over the generic fibre of some Lubin-Tate of classical type. Lubin-Tate spaces of classical type are known to be quasi-affine (e.g. see the proof of Lemma 2.5.1 of \cite{MR2383890}). By Corollary 5.6 of \cite{MR1395723}, quasi-affineness is inherited by finite \'{e}tale covers of a quasi-affine rigid analytic space, so partial Lubin-Tate spaces are quasi-affine, so the generic fibre of $\Def(\mathbb{G})^{\degen}_{\lvl \varpi^1}$ is quasi-affine. This proves the first claim in the case $m=1$.

The first claim is proven by induction on $m$. In case $m>1$, the inductive step is proven by much the same argument as that of part (b) of the lemma following Proposition 4.3 of \cite{MR0384707}: $\Def(\mathbb{G})^{\degen}_{\lvl \varpi^m}$ coincides with the formal spectrum of 
\begin{eqnarray*} \Gamma(\Def(\mathbb{G})^{\degen}_{\lvl \varpi^{m-1}})[[\theta_1, \dots \theta_n]]/\left( [\varpi]_{\widetilde{\mathbb{G}}_{m-1}}(\theta_1)-\eta_{m-1}(e_1),  \right.\ \ \ \ \ \ \  \\ \left.[\varpi]_{\widetilde{\mathbb{G}}_{m-1}}(\theta_2)-\eta_{m-1}(e_2),\dots ,[\varpi]_{\widetilde{\mathbb{G}}_{m-1}}(\theta_n)-\eta_{m-1}(e_n) \right),\end{eqnarray*}
where $e_1, \dots ,e_n$ is a minimal set of generators for the $A$-module $(\varpi^{-(m-1)}A/A)^n$, and where $\eta_{m-1}: (\varpi^{-(m-1)}A/A)^n \rightarrow \mathbb{G}_{m-1}(\mathfrak{m}_{\mathbb{G}_{m-1}})$ is the universal level $\varpi^{m-1}$ structure defined on $\Gamma\left(\Def(\mathbb{G})^{\degen}_{\lvl \varpi^{m-1}}\right)$. If we have already decomposed the generic fibre of $\Def(\mathbb{G})^{\degen}_{\lvl \varpi^m}$ as a disjoint union of the generic fibre of finitely many affine formal schemes $X$, each of which has generic fibre which is finite, \'{e}tale, and regular over that of $\Def(\mathbb{G})$, then \begin{eqnarray*}\Spf \Gamma(X)[[\theta_1, \dots \theta_n]]/\left( [\varpi]_{\widetilde{\mathbb{G}}_X}(\theta_1)-\eta_{X}(e_1), \right. \ \ \ \ \ \ \ \ \\ \left. [\varpi]_{\widetilde{\mathbb{G}}_X}(\theta_2)-\eta_{X}(e_2), \dots ,[\varpi]_{\widetilde{\mathbb{G}}_X}(\theta_n)-\eta_{X}(e_n) \right)\end{eqnarray*} also has generic fibre which is finite, \'{e}tale, and regular over that of $\Def(\mathbb{G})$, completing the inductive step.
\end{proof}



\def\cprime{$'$} \def\cprime{$'$} \def\cprime{$'$} \def\cprime{$'$}

\end{document}